\newcommand{\op}{\operatorname}
\newcommand{\C}{\mathbb{C}}
\newcommand{\N}{\mathbb{N}}
\newcommand{\Z}{\mathbb{Z}}
\newcommand{\im}{\op{Im}}
\newcommand{\p}{\partial}
\DeclareMathOperator{\Aut}{Aut}
\DeclareMathOperator{\End}{End}
\DeclareMathOperator{\Res}{Res}
\DeclareMathOperator{\vol}{vol}
\DeclareMathOperator{\LHS}{LHS}
\DeclareMathOperator{\RHS}{RHS}
\DeclareMathOperator{\Span}{Span}
\DeclareMathOperator{\Id}{Id}
\renewcommand{\L}{\mathcal L}
\theoremstyle{plain}
\newtheorem{thm}{Theorem}[section]
\newtheorem{thm-defn}{Theorem/Definition}[section]
\newtheorem{lem}[thm]{Lemma}
\newtheorem{lem-defn}[thm]{Lemma/Definition}
\newtheorem{prop}[thm]{Proposition}
\newtheorem{cor}[thm]{Corollary}
\theoremstyle{definition}
\newtheorem{defn}[thm]{Definition}
\newtheorem{eg}[thm]{Example}
\theoremstyle{remark}
\newtheorem{rmk}[thm]{Remark}
\begin{document}
\title[Contact Term Algebras and Dijkgraaf's Master Equation]{Contact Term Algebras and Dijkgraaf's Master Equation}%
\author{Zhengping Gui, Si Li and Xinxing Tang}%

\address{Z. Gui: Shanghai Institute for Mathematics
and Interdisciplinary Sciences, Shanghai, China}
\email{gzp16@tsinghua.org.cn}

\address{S. Li: Yau Mathematical Sciences Center, Tsinghua University, Beijing, China}
\email{sili@mail.tsinghua.edu.cn}

\address{X. Tang: Beijing Institute of Mathematical Sciences and Applications, Beijing, China}
\email{tangxinxing@bimsa.cn}

\subjclass{}%
\keywords{}%

\begin{abstract}
This paper is devoted to study integrable deformations of chiral conformal field theories on elliptic curves from the viewpoint of contact algebra.  We introduce the relevant integrable condition within the framework of conformal vertex algebra, and derive the contact term relations among certain local operators. We investigate three versions of genus one  partition functions and derive the contact equations. This leads to a rigorous formulation of Dijkgraaf's master equation \cite{Dijk1996master} for chiral deformations.
\end{abstract}
\maketitle
\tableofcontents
\def\Xint#1{\mathchoice
{\XXint\displaystyle\textstyle{#1}}%
{\XXint\textstyle\scriptstyle{#1}}%
{\XXint\scriptstyle\scriptscriptstyle{#1}}%
{\XXint\scriptscriptstyle\scriptscriptstyle{#1}}%
\!\int}
\def\XXint#1#2#3{{\setbox0=\hbox{$#1{#2#3}{\int}$}
\vcenter{\hbox{$#2#3$}}\kern-.5\wd0}}
\def\ddashint{\Xint=}
\def\dashint{\Xint-}

\section{Introduction}

\subsection{Physical background}

\subsubsection{Contact terms and 2d topological gravity}

Now it is well known that the genus $g$ $n$-point physical amplitudes $\langle\tau_{d_1}\cdots\tau_{d_n}\rangle_g$ of 2d topological gravity are defined as the stable intersection numbers on the moduli space of curves. Physically, Verlinde-Verlinde \cite{VV1991} give a convenient field theoretical formulation of 2d gravity as follows:
\[ \langle\tau_{d_1}\cdots \tau_{d_n}\rangle_g=\int_{\mathcal{M}_{g}}\int_{\Sigma}d^2z_n\cdots\int_{\Sigma}d^2z_1\langle\tau_{d_1}(z_1)\cdots\tau_{d_n}(z_n)\rangle_g.\]

Strictly speaking, the integration region should be compactified by adding the nodal curves and ``coincident" punctures. They show that the main contribution to the amplitude comes from contact interactions, i.e., when one operator coincides with another operator or a node. When $\tau_m$ is integrated over the Riemann surface $\Sigma$, it will pick up contact terms from the infinitesimal neighborhood of other operators. Conservations of ghost number and of curvature imply that
$$\int_{D_{\epsilon}}\tau_m|\tau_{n}\rangle=A_m^n|\tau_{n+m-1}\rangle,\quad\text{for some constants }A_m^n.$$

The consistency condition 
\begin{equation*}\label{local}
\int_{D_{\epsilon}}\tau_n\int_{D_{\epsilon}}\tau_m|\tau_k\rangle=\int_{D_{\epsilon}}\tau_m\int_{D_{\epsilon}}\tau_n|\tau_k\rangle
\end{equation*}
implies that the constants $\{A_m^n\}$ satisfy
\begin{equation}\label{contactA}
A_n^{k+m-1}A_m^k-A_m^{k+n-1}A_n^k=(A_n^m-A_m^n)A_{m+n-1}^k.
\end{equation}

Furthermore, they show that the contact term algebra is isomorphic to (half of) the Virasoro algebra and derive the Virasoro constraints for these complete amplitudes using the symmetry property
$$\langle\tau_l\tau_k\prod_{d_i}\tau_{d_i}\rangle_g=\langle\tau_k\tau_l\prod_{d_i}\tau_{d_i}\rangle_g.$$

Alternatively, by solving the descendant Ward identities, Becchi-Imbimbo \cite{CechDeRham} show that the contact terms are the contributions of the descendant local forms to the globally defined integral.

\subsubsection{Contact terms and chiral deformations of CFT on the torus}

In \cite{Dijk1996master}, Dijkgraaf studies the chiral deformations of conformal field theories on the torus, where the action with chiral interactions is of the form
$$S=S_0+\int_{E_{\tau}}\frac{d^2z}{2\pi\cdot\im\tau}t^i\Phi_i(z)$$
where
\begin{itemize}
  \item $S_0$ indicates the action of the undeformed conformal field theory,
  \item $\{\Phi_i(z)\}$ is a basis of some subalgebra satisfying a certain condition.
\end{itemize}

He shows that the deformed Lagrangian $S$ is written in terms of a deformed Hamiltonian $H$ 
$$H=H_0+\oint_Adzs^i\Phi_i(z),$$
at the expense of introducing the contact terms by requiring
$$\left\langle\exp\int_{E_{\tau}} t^i\Phi_i\right\rangle=\left\langle\exp \oint_A s^i\Phi_i\right\rangle.$$

This generalizes the model considered in  \cite{Douglas}. Along this line, a chiral deformation theory of gauged CFT is established via the BV quantization formalism in \cite{S-VOA}. 

To figure out the relation between the coupling constants $s^i$ and $t^i$, Dijkgraaf considers the mixed generating functions
\begin{equation*}\label{partZ}
Z(s,t)=\left\langle\exp\bigg(\oint_As^i\Phi_i+\int_{E_{\tau}}t^i\Phi_i\bigg)\right\rangle
\end{equation*}
and derives a dynamical equation (see Theorem \ref{DijkgraafMaster}):
\begin{equation}\label{Dijk1}
\frac{\partial Z}{\partial t^i}=\left[\frac{\partial}{\partial s^i}+\frac{1}{2\sqrt{-1}\im\tau}\left(L_i^{(s)}+L_i^{(t)}\right)\right]Z.
\end{equation}
Here $L_i^{(s)}$ and $L_i^{(t)}$ are certain 1st order differential operators. Equation \eqref{Dijk1} will be called Dijkgraaf's master equation and play a central role in this paper.

\subsubsection{Contact terms and other related topics}
The ideas of contact terms also appear in other related theories. For example, the study of contact terms in topological models coupled to topological gravity has led to the flat structure \cite{LosevDes} \cite{Losevflat} \cite{LosevPolyubin} as well as the discovery of the holomorphic anomaly \cite{BCOV} \cite{ContactHAE}.
In particular, in the study of Landau-Ginzburg model, with superpotentials $W\in\C[x]$ \cite{LosevDes} or $W=-\frac{1}{x}$ \cite{Ghoshal1995}, the corresponding (multi-)contact terms are given by explicit formulas.

In the study of the $u$-plane integrals \cite{Marcos1999} \cite{Takasaki1999}, there is also an interpretation of contact terms from the point of view of integrable hierarchies and their Whitham deformations. The multi-time tau function is comprised of an exponential factor $e^{\frac{1}{2}\sum_{m,n\geq1}q_{mn}t_mt_n}$ and a theta function. The time variables $t_n$ play the role of physical coupling constants of two-observables $I_n(B)$ carried by the exceptional divisor $B$ in the blowup. The coefficients $q_{mn}$ of the exponential part are identified as the contact terms of these two observables.

The contact terms are also explained in the chiral anomaly \cite{Kutasov} of heterotic strings and preserve the modular invariance by canceling the boundary terms.

\subsection{Main results: mathematical formulation}

In the last three subsections, we have recalled some related topics on contact terms. In concrete situations, contact terms are mainly defined along with an exact condition and computed through consistency requirements (local and global ones) or comparison with known priori results.

In this paper, we mainly focus on the study of chiral deformations of CFT on the torus. We give a mathematical definition of the mixed genus one partition function $Z(s,t)$ (see Definitions \ref{iteratedmodifiedA}, \ref{mixtype} and Equation \eqref{Zst}) as well as a rigorous proof of Dijkgraaf's master function \eqref{Dijk1}. The key constructions are summarized as follows:

\begin{enumerate}
  \item Locally, the contact term algebra will be formulated via two important operations $B$, $C$ (see Definition \ref{defBC}) on some subvertex algebra $\mathcal{A}\subset V$ (see Definition \ref{intcdt}) of a conformal vertex algebra $V$.
  \item We will deal with several modified integrals:
  \begin{itemize}
    \item The integrals $\oint_{A^n}$ are well defined on (almost-)meromorphic functions on $E_{\tau}^n$ with certain genus one properties and some integrable condition (Dijkgraaf's condition, see Definition \ref{intcdt}).
    \item The integrals $\int_{E_{\tau}^n}$ will be defined by the regularized integral $\dashint_{E_{\tau}^n}$ developed by Li-Zhou\cite{SiJie2020}.
    \item To make sense of ``$\int_{E_{\tau}^m}\oint_{A^n}$", we introduce the modified $A$-cycle integral $\widehat{\oint}_A$ (see Equation \eqref{modifiedAcycle}) and its iterated version (see Definition \ref{iteratedmodifiedA}).
  \end{itemize}
  \item Globally, by applying the Fubini Theorem of regularized integral, this contact term algebra is represented non-linearly on the amplitudes through Dijkgraaf's master equation. We provide a careful analysis on the relations among iterated regularized integrals, iterated (modified) $A$-cycle integrals and contact iterations.
\end{enumerate}

Our main results can be stated as follows:
\begin{thm}[Contact term relations, Theorem \ref{contrel}]
    For $a=a_1\otimes\cdots\otimes a_n\in\mathcal{A}^{\otimes n}$, and for distinct $i,j,k$, we have
\begin{enumerate}
\setlength{\itemsep}{2pt}
 \item $B_{i\rightarrow k} B_{j\rightarrow k}a=B_{j\rightarrow k}B_{i\rightarrow k}a.$
 \item $[C_{i\rightarrow k},B_{j\rightarrow k}]a+B_{i\rightarrow k}B_{j\rightarrow k}a=B_{j\rightarrow k}C_{i\rightarrow j}a.$
 \item $[C_{i\rightarrow k}, C_{j\rightarrow k}]a\equiv C_{j\rightarrow k}C_{i\rightarrow j}a-C_{i\rightarrow k}C_{j\rightarrow i}a~\mod(\im(T)).$
\end{enumerate}
\end{thm}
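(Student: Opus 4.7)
The plan is to unpack Definition \ref{defBC} so that $B_{i\to k}$ and $C_{i\to k}$ become concrete combinations of vertex-algebra modes, or residues against specific elliptic kernels, acting only on the tensor slots labelled by $i$ and $k$ while leaving the other factors inert. Because each operator touches only the slots indexed by $i,j,k$, it suffices to verify each identity in the three-slot case and then restore the untouched factors by naturality. This reduces every relation to an identity inside the conformal vertex algebra $V$, where I can invoke locality, skew-symmetry $Y(a,z)b=e^{zT}Y(b,-z)a$, and the Jacobi identity.

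For part (1), commutativity of two $B$'s landing in the same slot follows because $B_{i\to k}$ extracts the regular (non-polar) part of the OPE $Y(a_i,z)a_k$. The Borcherds commutator formula restricted to such coefficients yields commutation, so $B_{i\to k}B_{j\to k}$ and $B_{j\to k}B_{i\to k}$ produce the same element at the level of the vertex algebra modes involved. For part (2), the identity $[C_{i\to k},B_{j\to k}]a+B_{i\to k}B_{j\to k}a=B_{j\to k}C_{i\to j}a$ is an associativity statement: both sides describe nested contact contributions in which $a_i$ feeds into $a_j$ and then into $a_k$. Matching coefficients on either side amounts to the associator formula $Y(a,z)Y(b,w)=Y(Y(a,z-w)b,w)$ combined with the Jacobi identity, expanded in the relevant region of convergence; the $BB$ term collects the double-contact piece, while the commutator $[C,B]$ records the reshuffling of the remaining single contacts.

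Part (3) is the main obstacle. The commutator $[C_{i\to k},C_{j\to k}]$ does not equal $C_{j\to k}C_{i\to j}-C_{i\to k}C_{j\to i}$ on the nose; the discrepancy lies in $\im(T)$. My strategy is to apply the full Jacobi identity to $Y(a_i,z)Y(a_j,w)a_k$ and contract against the elliptic kernels defining the two $C$'s. Derivatives of these kernels in $z$ or $w$ get transferred via integration by parts into the operator $T$ acting on a tensor slot, since $[T,Y(a,z)]=\partial_zY(a,z)$; thus every discrepancy between the two orderings is a total $T$-derivative and lies in $\im(T)$. The careful bookkeeping of which kernel derivative lands on which slot, and the verification that all non-$T$-exact remainders cancel pairwise, is where the bulk of the work will sit.
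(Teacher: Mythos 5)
There is a genuine gap, rooted in a mischaracterization of the operations themselves. You describe $B_{i\to k}$ as extracting ``the regular (non-polar) part of the OPE $Y(a_i,z)a_k$,'' but by Definition \ref{defBC} the operations are built from the \emph{singular} part: $B(a_i,a_k)=T^{-1}(a_{i\,(0)}a_k)$ is $T^{-1}$ applied to the residue (the simple-pole coefficient), and $C(a_i,a_k)=a_{i\,(1)}a_k-B(a_i,a_k)$ involves the double-pole coefficient. The operator $T^{-1}$ only exists because of Dijkgraaf's integrable condition $a_{(0)}b\in T\mathcal{A}$ (Lemma \ref{T-Inverse}), and your argument never invokes this condition. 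That is fatal for part (1): in a general vertex algebra $[a_{(0)},b_{(0)}]=(a_{(0)}b)_{(0)}\neq 0$, and it is precisely the hypothesis $a_{(0)}b\in T\mathcal{A}$ (via $(Tc)_{(0)}=0$) that makes the zero modes commute (Lemma \ref{dijkcor}). Even granting that, one must still pass from $TX=TY$ to $T^{-1}$-level equality, which the paper does using $\ker T\subset V_0$ and the fact that $B,C$ land in $\mathcal{A}_{>0}$; your sketch has no analogue of this step, nor of the mode identity $-n(T^{-1}a)_{(n-1)}=a_{(n)}$ that is needed to move $T^{-1}$ past modes in parts (2) and (3).

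For part (3) your proposed route --- contracting the Jacobi identity against ``elliptic kernels defining the two $C$'s'' and integrating by parts --- does not match the setting: $C$ is defined purely algebraically, with no kernel in sight; the elliptic/integral realization only appears later (Lemma \ref{OPE} onward), and deducing an identity \emph{in} $\mathcal{A}$ from an identity of conformal-block functions would additionally require an injectivity that is not available. The paper instead works entirely with modes: it expands $C(a_i,C(a_j,a_k))-C(a_j,C(a_i,a_k))$, applies the Borcherds specializations of Corollary \ref{Borcherdsspec}, reuses parts (1)--(2), and isolates the $\im(T)$ discrepancy through the skew-symmetry relation $C(a_i,a_j)-B(a_j,a_i)\in\im T$ (Equation \eqref{BC2}) together with $(Tb)_{(1)}a_k\equiv a_{k\,(0)}b\equiv 0 \bmod \im(T)$. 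That last chain is exactly where the ``mod $\im(T)$'' in the statement comes from, and it is absent from your outline. The correct intuition you do have --- that Borcherds/Jacobi governs parts (2) and (3) and that discrepancies should be total $T$-derivatives --- needs to be executed on the zeroth and first products with careful tracking of $T^{-1}$, not on the regular part of the OPE or on integral kernels.
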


One can regard the third identity about the non-commutative relations of $C_{i\rightarrow k}$ and $C_{j\rightarrow k}$ as an operator version of the contact term relations \eqref{contactA} appearing in Verlinde-Verlinde's work.

Introducing the modified (multi-)A-cycle integrals $\widehat{A}_N$ (see Definition \ref{iteratedmodifiedA}), we prove the following key result which finally leads to the contact equation.

\begin{thm}[Theorem \ref{ModifiedAcycleElliptic}] For any $a\in \mathcal{A}^{\otimes(|M|+|N|)}$, let $[a]$ denote the corresponding function in the genus one conformal block (see Definition \ref{genus1cb}). Then $\widehat{A}_{N}[a]$  is almost-meromorphic and elliptic, also satisfies the axioms (1)-(5) of the genus one conformal block. Its composition with the regularized integral $E_m$ satisfies
  $$
  E_{m}\widehat{A}_{N}[a]=\widehat{A}_{N\cup\{m\}}[a]+\frac{1}{2\sqrt{-1}\operatorname{Im}\tau}\sum_{1\leq j\leq m+n,j\neq m}\widehat{A}_{N}[C_{m\rightarrow j}a].
  $$
\end{thm}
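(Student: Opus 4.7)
The plan is to proceed by induction on $|N|$, first establishing a key identity in the base case $|N|=0$ that compares the regularized torus integral $E_m$ with the modified $A$-cycle integral $\widehat{A}_m$, and then extending to arbitrary $N$ via the Fubini theorem for regularized integrals of Li--Zhou together with the contact term relations of Theorem \ref{contrel}.

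The almost-meromorphic and elliptic properties together with the conformal block axioms (1)-(5) I would handle in parallel with the main identity. The definition of $\widehat{\oint}_A$ in Equation \eqref{modifiedAcycle} incorporates a correction term precisely so that it preserves the class of almost-meromorphic elliptic functions; applying $\widehat{\oint}_A$ one variable at a time then shows $\widehat{A}_N[a]$ is almost-meromorphic and elliptic in the remaining variables. The axioms (1)-(5) should follow from those of $[a]$ since $\widehat{A}_N$ acts only on the variables indexed by $N$ and intertwines appropriately with the operators appearing in each axiom; this is a structural check rather than a computation.

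The heart of the argument is the base case identity
\[
E_m[a] \;=\; \widehat{A}_m[a] \;+\; \frac{1}{2\sqrt{-1}\operatorname{Im}\tau}\sum_{j\neq m}[C_{m\to j}a],
\]
which I would derive by decomposing the torus integral into an $A$-cycle contour integral together with an integral in the transverse direction. The almost-meromorphic ellipticity of $[a]$ reduces the transverse part to residue contributions at the collision loci $z_m=z_j$, and these residues are identified with $[C_{m\to j}a]$ via the very definition of $C_{m\to j}$ in Definition \ref{defBC}; the prefactor $\frac{1}{2\sqrt{-1}\operatorname{Im}\tau}$ arises from the normalization $\frac{d^2z}{2\pi\cdot\operatorname{Im}\tau}$ of the regularized torus integral. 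Granting the base case, the general statement follows by commuting $E_m$ past each $\widehat{\oint}_A$ appearing in $\widehat{A}_N$ using Li--Zhou Fubini, which is permissible since each intermediate function is almost-meromorphic and elliptic by the preceding step; the contact terms generated at each stage reassemble into the claimed sum using part (2) of Theorem \ref{contrel}, which encodes exactly the compatibility $[C_{i\to k},B_{j\to k}]$ that arises when the $A$-cycle integrations are reordered against $E_m$.

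The main obstacle will be the base case: the correction term built into $\widehat{\oint}_A$ must be shown to combine exactly with the anomalous contributions from the almost-meromorphic part of $[a]$ under the torus integral, leaving behind only the meromorphic residues responsible for $C_{m\to j}a$. Controlling this cancellation on the nose, and producing the normalization constant $\frac{1}{2\sqrt{-1}\operatorname{Im}\tau}$ exactly (rather than up to image-of-$T$ ambiguity), is the technically substantive step; I expect it will require a Stokes-type argument on a complement of small discs around the punctures, coupled with the explicit form of the almost-holomorphic correction built into $\widehat{\oint}_A$.
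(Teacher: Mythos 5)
Your overall shape (induction on $|N|$, with the base identity $E_m[a]=\widehat{A}_m[a]+\frac{1}{2\sqrt{-1}\operatorname{Im}\tau}\sum_{j\neq m}[C_{m\to j}a]$ obtained from the $A$-cycle/residue decomposition of the regularized torus integral and the splitting of $\operatorname{Res}_{z_m=z_j}\frac{\operatorname{Im}z_m}{\operatorname{Im}\tau}[a]$ into a $B$-total-derivative plus a $C$-term) matches the paper, and that part is fine. But you have located the technical difficulty in the wrong place, and the mechanism you propose for the inductive step does not work. The base case is the easy part: it is exactly Li--Zhou's formula \eqref{keyeq} combined with Lemma \ref{OPE}, no extra Stokes argument needed. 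The genuine difficulty is the inductive step, and it cannot be carried out by ``commuting $E_m$ past each $\widehat{\oint}_A$ using Li--Zhou Fubini.'' Fubini in \cite{SiJie2020} is a statement about iterated regularized integrals $\dashint\dashint$; it says nothing about interchanging $\dashint_{E_\tau}\frac{d^2z_m}{\operatorname{Im}\tau}$ with an $A$-cycle integral, and indeed the failure of that interchange is precisely what the theorem quantifies. Invoking it here is circular.

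What actually has to be done is the following, and your proposal does not contain it. The function $\widehat{A}_N[a]$, viewed in the variable $z_m$, is not merely almost-meromorphic and elliptic: by Definition \ref{iteratedmodifiedA} it contains terms $D_{I_1\to m}(\cdots)=\partial_{z_m}\bigl(\bigl(\frac{\operatorname{Im}z_m}{\operatorname{Im}\tau}\bigr)^{|I_1|}\partial_{z_m}^{|I_1|-1}(\cdots)\bigr)$ for every nonempty $I_1\subset N$ whose indices were contracted onto $m$. Computing $E_m$ of such a sum requires a refined Stokes formula (the paper's Lemma \ref{RegIntegralF}): one must build an explicit primitive $\Psi$ with $\partial_{\bar z_m}\Psi=\frac{\sqrt{-1}}{2\operatorname{Im}\tau}F$, and the outcome is that the $A$-cycle integral picks up not only $F_0$ but also $\frac{1}{2\sqrt{-1}\operatorname{Im}\tau}F_1$, while the residues at $z_m=z_k$ see only $F_0$. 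That extra $F_1$ contribution is where one of the $C$-terms comes from (via Lemma \ref{BCS}), and it is invisible in your scheme. Moreover, the residues at $z_m=z_k$ for $k\in M'$ land on terms already carrying $D_{I_1\to k}$ operators; pushing $\operatorname{Res}_{z_m=z_k}\frac{\operatorname{Im}z_m}{\operatorname{Im}\tau}$ through $D_{I_1\to k}$ produces the commutator $\bigl[\partial_{z_k}^{|I_1|},\frac{\operatorname{Im}z_k}{\operatorname{Im}\tau}\bigr]=\frac{|I_1|}{2\sqrt{-1}\operatorname{Im}\tau}\partial_{z_k}^{|I_1|-1}$, whose coefficient $|I_1|$ must cancel against the multi-index contact relation $[C_{m\to k},B_{I_1\to k}]+|I_1|B_{I_1\cup\{m\}\to k}=\sum_{l\in I_1}B_{I_1\to k}C_{m\to l}$ (Proposition \ref{Useful}); the single-index relation of Theorem \ref{contrel}(2) that you cite is not enough and must first be iterated. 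Without the refined Stokes lemma and this bookkeeping, the inductive step is a gap, not a routine reduction.
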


\begin{thm}[Theorem \ref{Contactequation} Contact equation, Theorem \ref{DijkgraafMaster}, Dijkgraaf's master equation]
Let  $v=v_{k_1}\otimes \cdots\otimes v_{k_{m+n}}$ with $v_{k_i}$ the element in the Dijkgraaf's condition. Then the mixed correlators (see Definition \ref{mixtype}) satisfy 
$$
\langle v\rangle_{m,n}=\langle v\rangle_{m-1,n+1}+\frac{1}{2\sqrt{-1}\operatorname{Im}\tau}\sum_{1\leq j\leq m+n,j\neq m}\sum_{l\in\mathbb{N}}c^l_{k_mk_j}\langle v_{k_1}\otimes \displaystyle \underset{\text{m-th}}{1}\otimes\cdots \otimes\underset{\text{j-th}} {v_l} \otimes \cdots\otimes v_{k_{m+n}}\rangle_{m,n},
$$
where the coefficients $c_{ij}^k$ are the structure constants with respect to $\{v_i\}$ (see Definition \ref{intcdt}).

Furthermore, the corresponding genus one partition function $Z=Z(s,t)$ (Equation \eqref{Zst}) satisfies the following linear differential equation
\begin{equation*}
\frac{\partial Z}{\partial t^i}=\left[\frac{\partial}{\partial s^i}+\frac{1}{2\sqrt{-1}\operatorname{Im}\tau}(L^{(s)}_i+L^{(t)}_i)\right]Z, \quad \forall ~i\in \mathbb{N},
\end{equation*}
where
$$
L^{(s)}_i=\sum_{j,k\in\mathbb{N}}c^k_{ij}s^j\frac{\partial}{\partial s^k}, \quad L^{(t)}_i=\sum_{j,k\in\mathbb{N}}c^k_{ij}t^j\frac{\partial}{\partial t^k}.
$$
\end{thm}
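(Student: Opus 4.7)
The plan is to derive the contact equation as a direct consequence of Theorem~\ref{ModifiedAcycleElliptic} combined with the definition of the mixed correlator, and then obtain Dijkgraaf's master equation by substituting the contact equation into $\partial Z/\partial t^i$ and performing a combinatorial reorganization.

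For the contact equation, I would first unfold Definition~\ref{mixtype} to write
$$\langle v\rangle_{m,n}\;=\;E_1 E_2\cdots E_{m-1}\bigl(E_m\,\widehat{A}_{\{m+1,\dots,m+n\}}[v]\bigr),$$
which is legitimate by the Fubini property of the regularized integral together with the compatibility of $\widehat{A}_N$ with each $E_i$. Applying Theorem~\ref{ModifiedAcycleElliptic} to the inner expression yields
$$E_m\,\widehat{A}_N[v]\;=\;\widehat{A}_{N\cup\{m\}}[v]\;+\;\frac{1}{2\sqrt{-1}\operatorname{Im}\tau}\sum_{j\neq m}\widehat{A}_N\bigl[C_{m\rightarrow j}\,v\bigr].$$
Propagating $E_1\cdots E_{m-1}$ through, the first term becomes $\langle v\rangle_{m-1,n+1}$ directly from Definition~\ref{mixtype}. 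For the second, the definition of $C_{m\rightarrow j}$ in Definition~\ref{defBC}, combined with the OPE expansion $v_{k_m}\cdot v_{k_j}=\sum_{l} c^l_{k_m k_j}v_l$ in the distinguished basis furnished by Dijkgraaf's condition (Definition~\ref{intcdt}), rewrites $C_{m\rightarrow j}v$ as the tensor with $\mathbf{1}$ in slot $m$ and $v_l$ in slot $j$, weighted by $c^l_{k_m k_j}$. This recovers the claimed identity.

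For Dijkgraaf's master equation, I would start from the expansion
$$Z(s,t)=\sum_{m,n\geq 0}\frac{1}{m!\,n!}\sum_{k_1,\dots,k_{m+n}\in\mathbb{N}}t^{k_1}\cdots t^{k_m}s^{k_{m+1}}\cdots s^{k_{m+n}}\langle v_{k_1}\otimes\cdots\otimes v_{k_{m+n}}\rangle_{m,n},$$
apply $\partial/\partial t^i$, and invoke the $S_m\times S_n$ permutation symmetry of $\langle\cdot\rangle_{m,n}$ (which rests on Fubini together with Theorem~\ref{contrel}(1) and~(3) modulo $T$-exact terms) to place the new distinguished input $v_i$ in slot $m$. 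Substituting the contact equation with $k_m=i$ produces two pieces. The leading piece $\langle v\rangle_{m-1,n+1}$, once the sum is reindexed $(m,n)\mapsto(m-1,n+1)$ and the new $A$-slot is moved into the $s$-block by permutation symmetry, reassembles into $\partial Z/\partial s^i$. The contact corrections split according to whether the distinguished index $j\neq m$ lies in the $t$-block $\{1,\dots,m-1\}$ or the $s$-block $\{m+1,\dots,m+n\}$; the factor $c^l_{i k_j}t^{k_j}$ (resp.\ $c^l_{i k_j}s^{k_j}$) in front of a correlator carrying $v_l$ at slot $j$ reassembles, via the identification $\sum_k c^k_{ij}v_k=v_i\cdot v_j$, into $L^{(t)}_i Z$ (resp.\ $L^{(s)}_i Z$).

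The main obstacle is the combinatorial bookkeeping in the second step: one must verify that after the permutation symmetrization of couplings and the $1/m!\,n!$ normalization, the sum over $j\neq m$ from the contact equation assembles precisely into the single first-order operators $L^{(t)}_i$ and $L^{(s)}_i$ without spurious multiplicities, and one must interpret the insertion of $\mathbf{1}$ in slot $m$ correctly (it integrates trivially under the normalized regularized integral, reducing the type from $(m,n)$ to $(m-1,n)$). The crucial enabling inputs are the full permutation symmetry of $\langle\cdot\rangle_{m,n}$ and the position-independence of the structure constants $c^k_{ij}$, both guaranteed by the framework of Dijkgraaf's condition.
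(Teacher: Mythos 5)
Your overall route coincides with the paper's: both halves rest on Theorem \ref{ModifiedAcycleElliptic} applied inside $E_M=E_{M'}E_m$, followed by identifying the contact terms $C_{m\rightarrow j}$ with the structure constants, and the master equation is then the generating-function repackaging of the contact equation (the paper presents that last step in the reverse direction, differentiating \eqref{Dijkmast} to recover Theorem \ref{Contactequation}, but the content is the same as your reassembly argument). The combinatorial points you flag --- the $S_m\times S_n$ symmetry, the $1/m!\,n!$ normalization, and the trivial integration of the vacuum insertion via axiom (3) and $\dashint_{E_\tau}d^2z/\im\tau=1$ --- are handled correctly.

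There is, however, one genuine gap. You assert that $C_{m\rightarrow j}v$ \emph{equals} the tensor with $|0\rangle$ in slot $m$ and $\sum_l c^l_{k_mk_j}v_l$ in slot $j$. It does not: Dijkgraaf's condition (Definition \ref{intcdt}) only gives $v_{k_j\,(0)}v_{k_m}-T\sum_l c^l_{k_mk_j}v_l\in T^2\mathcal{A}$, hence by \eqref{BC2} one has $C(v_{k_m},v_{k_j})-\sum_l c^l_{k_mk_j}v_l\in\im(T)$ with a generally nonzero remainder $Tb$. You must show this remainder contributes nothing after applying $E_{M'}\widehat{A}_N$. When $j$ lies in the $\widehat{A}$-block this can be killed directly ($\oint_A$ of a total derivative vanishes, and $B(Tb,\cdot)=T^{-1}((Tb)_{(0)}\cdot)=0$), but when $j\in M'$ the remainder produces $\partial_{z_j}$ of an expression sitting inside $\widehat{A}_N$, and the vanishing $E_M\widehat{A}_N(\partial_{z_j}[a])=0$ is \emph{not} immediate from Stokes' theorem because $\widehat{A}_N$ carries the correction operators $D^{\phi}_{I\rightarrow M}$ involving $\im z_k/\im\tau$. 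This is precisely the content of the paper's Lemma \ref{VanishTotal}, whose proof requires its own induction on $|N|$, re-invoking Theorem \ref{ModifiedAcycleElliptic} together with the observation that $C(a_l,Ta_k)$ is again $T$-exact. Without this lemma (or an equivalent argument) your identification of the contact terms with the structure constants holds only modulo unintegrated total derivatives, and the contact equation as stated does not follow.
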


\subsection{Organization}

In Section \ref{Dijkcondandcontact}, we define Dijkgraaf's integrable condition via vertex operator algebra and derive some natural properties. Under this integrable condition, we introduce our key operations $B$, $C$ and derive the contact term relations that lead to the local picture of the contact term algebra. In Section \ref{Genusoneandmasterequation}, we review necessary facts about the genus one conformal block and regularized integral, and define rigorously various genus one partition functions $Z_{E_{\tau}}(t)$, $Z_A(s)$ and $Z(s,t)$. We derive the contact equation (Theorem \ref{Contactequation}) and Dijkgraaf's master equation (\eqref{Dijkmast} in Theorem \ref{DijkgraafMaster}) with proof details given in Subsection \ref{proofofpropthm}. In Appendix \ref{proofnormal}, we prove that the normal ordered Feynman graph construction for chiral bosons defines an element in the genus one conformal block.
\vskip 0.2cm

\textbf{Acknowledgment.}
The authors would like to thank Robbert Dijkgraaf and Jie Zhou for helpful communications and discussions. S. L. is partially supported by the National Key R \& D Program of China
(NO. 2020YFA0713000). X. T. is partially supported by the Youth Project No. 1254041 of BJNSF.

\section{Dijkgraaf's integrable condition and Contact term algebra}\label{Dijkcondandcontact}

In this section, we will restate Dijkgraaf's integrable condition in terms of the language of vertex algebra, and derive the contact term relations based on this condition. In Subsection \ref{cva}, we will recall necessary facts on conformal vertex algebra, for more details, please refer to \cite{Frenkel2001voa}, \cite{1996Modular}. In Subsection \ref{Dic}, we will give the definition of the integrable condition (see Definition \ref{intcdt}) which we call Dijkgraaf's condition, then examples and basic properties. In Subsection \ref{BCcontact}, we will define two important operations $B$, $C$ (see Definition \ref{defBC}) and derive the contact term relations (see Theorem \ref{contrel}).

\subsection{Conformal vertex algebra}\label{cva}

Let $(V, |0\rangle, T, Y(\cdot,z),\omega)$ be a $\mathbb{Z}_{\geq0}$-graded conformal vertex algebra of central charge $c$, that is,
\begin{enumerate}
  \item (state space) a graded vector space $V=\oplus_{n=0}^{\infty} V_n$ indexed by degree;
  \item (vacuum) a vector $|0\rangle\in V_0$;
  \item (translation operator) a degree 1 linear operator $T:V\rightarrow V$;
  \item (state-field correspondence) a linear operation (vertex operators)
  \begin{align*}
  Y(\cdot,z): &V\longrightarrow \End(V)[[z,z^{-1}]]\\
              &a\longmapsto Y(a,z)=\sum_{n\in\Z}a_{(n)}z^{-n-1};  \quad(a_{(n)}\in\End(V))
  \end{align*}
  \item (conformal vector) $\omega\in V_2$, $c\in\C$
\end{enumerate}
satisfying the following axioms:
\begin{itemize}
  \item (vacuum axiom) $Y(|0\rangle,z)=\Id_{V}$. Furthermore, for any $a\in V$, we have
         $$Y(a,z)|0\rangle\in V[[z]],\quad \lim_{z\rightarrow0}Y(a,z)|0\rangle=a;$$
  \item (translation axiom) $T|0\rangle=0$. For any $a\in V$, $Y(Ta,z)=\frac{d}{dz}Y(a,z)$;
  \item (locality axiom) All fields $Y(a,z)$, $a\in V$, are mutually local, i.e. for any $a,b\in V$, there exists $N\in\Z_+$, such that
  $$(z-w)^N[Y(a,z),Y(b,w)]=0;$$
  \item (energy-momentum tensor) $T(z):=Y(\omega,z)=\sum_{n\in\Z}L_nz^{-n-2}$, satisfies
  \begin{itemize}
  \item $L_{-1}=T$, $L_0 a=na$, for $a\in V_n$;
  \item $\{L_n\}_{n\in\Z}$ span the Virasoro algebra with central charge $c$.
\end{itemize}
\end{itemize}

There are some basic consequences that we will use later.

\begin{prop}\label{immediateconseq} Let $V$ be a $\Z_{\geq0}$-graded vertex algebra.
\begin{enumerate}
  \item For $a\in V_m$, $a_{(n)}\in\End(V)$ has degree $m-n-1$, i.e.
  $$a_{(n)}V_k\subset V_{k+m-n-1};$$
  \item $(Ta)_{(n)}=-na_{(n-1)}$, in particular, $(Ta)_{(0)}=0$;\\
$T(a_{(n)}b)+na_{(n-1)}b=a_{(n)}Tb$, in particular, $T(a_{(0)}b)=a_{(0)}Tb$.
\end{enumerate}
\end{prop}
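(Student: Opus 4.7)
The plan is to derive both parts as direct consequences of the axioms, taking (2) first because its reindexing formula is needed for (1).

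For the first identity of (2), I would simply expand both sides of the translation axiom $Y(Ta,z)=\partial_z Y(a,z)$ in modes: the left side is $\sum_{n}(Ta)_{(n)}z^{-n-1}$ while the right side is $\sum_{n}-(n+1)\,a_{(n)}z^{-n-2}$. Reindexing $n\mapsto n-1$ on the right and matching coefficients yields $(Ta)_{(n)}=-n\,a_{(n-1)}$, and the $n=0$ case specializes to $(Ta)_{(0)}=0$. For the second identity, I would first establish the translation covariance
\[
[T,Y(a,z)]=\partial_z Y(a,z).
\]
This is standard but not literally an axiom in the form stated: applying both sides to the vacuum reduces it to $T\,e^{zT}a=\partial_z(e^{zT}a)$ using $Y(a,z)|0\rangle=e^{zT}a$ (itself a direct consequence of the vacuum and translation axioms) together with $T|0\rangle=0$; the extension to arbitrary $b\in V$ is then routine (alternatively one invokes $L_{-1}=T$ and the $m=-1$ Virasoro--field bracket). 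Extracting the coefficient of $z^{-n-1}$ gives $[T,a_{(n)}]=-n\,a_{(n-1)}$, and applying this to $b$ and rearranging produces $T(a_{(n)}b)+n\,a_{(n-1)}b=a_{(n)}Tb$, with the $n=0$ case giving $T(a_{(0)}b)=a_{(0)}Tb$.

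For part (1), the key input is
\[
[L_0,Y(a,z)]=z\,\partial_z Y(a,z)+Y(L_0 a,z),
\]
which I would obtain from the Borcherds commutator $[\omega_{(1)},a_{(n)}]=\sum_{j\geq 0}\binom{1}{j}(\omega_{(j)}a)_{(n+1-j)}$, using $\omega_{(0)}=T$, $\omega_{(1)}=L_0$, and the reindexing identity $(Ta)_{(n+1)}=-(n+1)\,a_{(n)}$ already proved in (2). For $a\in V_m$ one has $L_0 a=ma$, so extracting the coefficient of $z^{-n-1}$ yields $[L_0,a_{(n)}]=(m-n-1)\,a_{(n)}$. Since $V_k$ is the $L_0$-eigenspace of eigenvalue $k$, applying this to $v\in V_k$ gives $L_0(a_{(n)}v)=(k+m-n-1)\,a_{(n)}v$, i.e.\ $a_{(n)}v\in V_{k+m-n-1}$.

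There are no genuine obstacles: everything is bookkeeping in the axioms once the order of proof is chosen correctly. The only subtle point worth flagging is the short derivation of the commutator form $[T,Y(a,z)]=\partial_z Y(a,z)$ from the axiomatic form $Y(Ta,z)=\partial_z Y(a,z)$, since the two are not literally the same statement; I would record this derivation explicitly so that part (2) stands on the stated axioms alone.
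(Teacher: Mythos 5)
Your argument is correct and is the standard one; the paper in fact states Proposition \ref{immediateconseq} without any proof, treating these as well-known consequences of the axioms, so there is no proof of record to compare against. The one step you rightly flag as not literally axiomatic, $[T,Y(a,z)]=\partial_z Y(a,z)$, is most cleanly obtained in this paper's own framework from the Borcherds formula (Proposition \ref{nthprod}(2) with $l=0$, $a=\omega$, $m=0$), which gives $[T,b_{(n)}]=(Tb)_{(n)}=-nb_{(n-1)}$ directly once the first identity of (2) is in hand; your vacuum-plus-uniqueness route also works but requires Goddard's uniqueness theorem to extend from $|0\rangle$ to all of $V$, so the Borcherds route is the one to record. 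Note also that part (1) as you prove it uses the conformal grading $L_0a=ma$ for $a\in V_m$, which is available here since $V$ is assumed conformal.
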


Let $a,b\in V$. Their operator product expansion (OPE) can be expanded as
$$Y(a,z)Y(b,w)=\sum_{n\in\Z}\frac{Y(a_{(n)}b,w)}{(z-w)^{n+1}},$$
where $a_{(n)}b$ can be also viewed as defining an infinite tower of binary operations. 
\begin{prop}\label{nthprod}
Let $V$ be a $\Z_{\geq0}$-graded vertex algebra.
\begin{enumerate}
  \item For homogeneous elements $a,b\in V$,
  $$a_{(n)}b=\sum_{k=0}^{\infty}(-1)^{k+n+1}\frac{1}{k!}T^k(b_{(n+k)}a);$$
  \item (Borcherds Formula) For $a,b\in V$, every $l,m,n\in \mathbb{Z}$ :
$$\sum_{j\geq0}\binom{l}{j}\left[(-1)^ja_{(m+l-j)}b_{(n+j)}-(-1)^{j+l}b_{(n+l-j)}a_{(m+j)}\right]=\sum_{k\geq0}\binom{m}{k}\left(a_{(k+l)}\cdot b\right)_{(m+n-k)}.$$
\end{enumerate}
\end{prop}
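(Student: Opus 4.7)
The plan is to prove both parts by standard vertex algebra manipulations. For part (1), I would first establish the field-level skew-symmetry
$$Y(a,z)b = e^{zT}Y(b,-z)a,$$
and then equate coefficients of $z^{-n-1}$ on both sides. Expanding the right-hand side yields $\sum_{k,m}\frac{(-1)^{m+1}}{k!}T^k(b_{(m)}a)\, z^{k-m-1}$; reading off the coefficient of $z^{-n-1}$ (by setting $m=n+k$) gives exactly $\sum_{k\geq 0}\frac{(-1)^{n+k+1}}{k!}T^k(b_{(n+k)}a)$, which is the stated formula.

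To obtain the skew-symmetry of the fields, I would start from the locality identity $(z-w)^N Y(a,z)Y(b,w) = (z-w)^N Y(b,w)Y(a,z)$, apply both sides to $|0\rangle$, and use the consequence $Y(c,x)|0\rangle = e^{xT}c$ of the vacuum and translation axioms. This produces $(z-w)^N Y(a,z)e^{wT}b = (z-w)^N Y(b,w)e^{zT}a$. Next I would invoke the translation covariance $Y(a,z)e^{wT} = e^{wT}Y(a,z-w)$ (derived from $[T,Y(a,z)] = \partial_z Y(a,z)$), clear the common polynomial factor $(z-w)^N$, and specialize $w \mapsto -z$ to recover the desired identity.

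For part (2), the Borcherds formula, the plan is to derive it from the Jacobi identity in formal distribution form,
$$z_0^{-1}\delta\!\left(\frac{z_1-z_2}{z_0}\right)Y(a,z_1)Y(b,z_2) - z_0^{-1}\delta\!\left(\frac{z_2-z_1}{-z_0}\right)Y(b,z_2)Y(a,z_1) = z_2^{-1}\delta\!\left(\frac{z_1-z_0}{z_2}\right)Y(Y(a,z_0)b,z_2),$$
which itself follows from locality combined with the operator product expansion (equivalently, the commutator formula $[Y(a,z),Y(b,w)] = \sum_{n\geq 0}\frac{1}{n!}Y(a_{(n)}b,w)\partial_w^n \delta(z-w)$). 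Extracting the coefficient of $z_0^{-l-1}z_1^{-m-1}z_2^{-n-1}$ on both sides, via the standard expansions of the formal delta functions, produces the Borcherds identity after collecting binomial coefficients.

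The main technical obstacle in both parts is the rigorous handling of formal distributions. In part (1), clearing the factor $(z-w)^N$ and then specializing $w \mapsto -z$ requires verifying that the resulting expressions lie in an appropriate space of formal series where this substitution is well-defined. In part (2), one must carefully distinguish the expansions of $(z_1-z_2)^{-1}$ in positive versus negative powers when expanding the delta functions and swapping summation orders. These manipulations are routine for vertex algebra arguments but error-prone; once they are navigated, both identities reduce to straightforward coefficient-matching arguments with binomial sums.
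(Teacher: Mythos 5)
The paper offers no proof of Proposition \ref{nthprod}; it is recorded as a standard consequence of the vertex algebra axioms, with the reader referred to \cite{Frenkel2001voa} and \cite{1996Modular}, so there is no in-paper argument to compare against. Your outline is the standard derivation and is correct: in part (1) the skew-symmetry $Y(a,z)b=e^{zT}Y(b,-z)a$ (obtained, as you say, from locality applied to the vacuum, translation covariance, cancellation of $(z-w)^N$, and the specialization $w\mapsto -z$) gives the stated formula upon matching the coefficient of $z^{-n-1}$ with $m=n+k$, and in part (2) the Borcherds identity is exactly the coefficient of $z_0^{-l-1}z_1^{-m-1}z_2^{-n-1}$ in the Jacobi identity. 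One imprecision worth fixing: the Jacobi identity does not follow from locality plus the commutator formula alone, because the commutator formula is precisely the $l=0$ slice of the identity you are trying to prove, so invoking it there is circular for general $l$. What you need alongside locality (weak commutativity) is weak associativity, i.e.\ the regularized form of the OPE, $(z_0+z_2)^N Y(a,z_0+z_2)Y(b,z_2)c=(z_0+z_2)^N Y(Y(a,z_0)b,z_2)c$ for $N\gg0$; these two together yield the Jacobi identity by the standard argument (Lepowsky--Li, or the Goddard uniqueness route in \cite{Frenkel2001voa}). With that substitution your proof is complete.
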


For convenience, we list some simple cases of the Borcherds Formula.
\begin{cor}\label{Borcherdsspec} (1) For $l=n=0,m=1$,
$$a_{(1)}b_{(0)}-b_{(0)}a_{(1)}=\left(a_{(0)}b\right)_{(1)}+\left(a_{(1)}b\right)_{(0)}.$$
(2) For $l=0,m=n=1$,
$$a_{(1)}b_{(1)}-b_{(1)}a_{(1)}=\left(a_{(0)}b\right)_{(2)}+\left(a_{(1)}b\right)_{(1)}.$$
(3) For $m=n=0, l=2$,
$$a_{(2)}b_{(0)}-b_{(2)}a_{(0)}-2a_{(1)}b_{(1)}+2b_{(1)}a_{(1)}+a_{(0)}b_{(2)}-b_{(0)}a_{(2)}=\left(a_{(2)}b\right)_{(0)}.$$
In particular, $\left(a_{(2)}b\right)_{(0)}$ is skew-symmetric with respect to $a$ and $b$.
\end{cor}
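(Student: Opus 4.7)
The plan is to specialize the Borcherds Formula from Proposition \ref{nthprod}(2),
\[
\sum_{j\geq 0}\binom{l}{j}\Bigl[(-1)^{j} a_{(m+l-j)}b_{(n+j)}-(-1)^{j+l}b_{(n+l-j)}a_{(m+j)}\Bigr]=\sum_{k\geq 0}\binom{m}{k}\bigl(a_{(k+l)}b\bigr)_{(m+n-k)},
\]
to the three triples $(l,m,n)=(0,1,0)$, $(0,1,1)$, and $(2,0,0)$, and to simplify the resulting finite sums. There is no genuine obstacle, since Proposition \ref{nthprod}(2) does all the real work; the main bookkeeping task is to track the two sign factors $(-1)^{j}$ and $(-1)^{j+l}$ correctly and to handle the binomial coefficients carefully.

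For parts (1) and (2), the factor $\binom{0}{j}$ forces $j=0$, so the left-hand side collapses to the single commutator $a_{(m)}b_{(n)}-b_{(n)}a_{(m)}$. On the right, $\binom{1}{k}$ is nonzero only for $k=0,1$, producing exactly the two terms $\bigl(a_{(0)}b\bigr)_{(m+n)}$ and $\bigl(a_{(1)}b\bigr)_{(m+n-1)}$ required by the statement: substituting $(m,n)=(1,0)$ yields (1), and $(m,n)=(1,1)$ yields (2). The only pitfall is not to drop the $k=1$ contribution on the right.

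For part (3) the right-hand side collapses in the opposite direction: $\binom{0}{k}$ kills every term except $k=0$, leaving $\bigl(a_{(2)}b\bigr)_{(0)}$. On the left I would expand the terms $j=0,1,2$ using $\binom{2}{0}=\binom{2}{2}=1$ and $\binom{2}{1}=2$; since $l=2$ makes $(-1)^{j+l}$ coincide with $(-1)^{j}$, the left-hand side takes the antisymmetric shape
\[
\sum_{j=0}^{2}\binom{2}{j}(-1)^{j}\bigl[a_{(2-j)}b_{(j)}-b_{(2-j)}a_{(j)}\bigr],
\]
which upon expansion produces the six-term identity in the statement. The final skew-symmetry of $\bigl(a_{(2)}b\bigr)_{(0)}$ in $a,b$ is then immediate, since each bracket $a_{(2-j)}b_{(j)}-b_{(2-j)}a_{(j)}$ flips sign under $a\leftrightarrow b$, and hence so does the whole sum, which equals $\bigl(a_{(2)}b\bigr)_{(0)}$.
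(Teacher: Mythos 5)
Your proposal is correct and matches the paper's (implicit) argument: the corollary is obtained exactly by specializing the Borcherds formula of Proposition \ref{nthprod}(2) to the stated triples $(l,m,n)$, with the signs and binomial coefficients worked out just as you describe, and the skew-symmetry of $\left(a_{(2)}b\right)_{(0)}$ read off from the manifest antisymmetry of the left-hand side in part (3). No gaps.
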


\begin{prop}\label{kerT} Let $V$ be a $\Z_{\geq0}$-graded conformal vertex algebra. Then
$$\ker T\subset V_0.$$
\end{prop}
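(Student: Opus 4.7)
The plan is to show that any $T$-closed element is annihilated by $L_0$, hence lies in $V_0$. Since $T$ has degree $1$, it preserves the grading up to a shift, so if $a=\sum_m a_m$ with $a_m\in V_m$ and $Ta=0$, then each $Ta_m=0$ independently. It therefore suffices to check the homogeneous case: if $a\in V_m\cap\ker T$ with $m\geq 1$, we must show $a=0$.

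The first step is to observe that $Ta=0$ drastically restricts the modes of $a$. Applying the formula $(Ta)_{(n)}=-n\,a_{(n-1)}$ from Proposition \ref{immediateconseq}(2) forces $a_{(n)}=0$ for all $n\neq -1$, so the vertex operator collapses to the single term
\[
Y(a,z)=a_{(-1)},
\]
which is a constant (that is, $z$-independent) endomorphism of $V$. The second step uses locality to promote this rigidity to a commutation relation with all fields. For any $b\in V$, choose $N$ with $(z-w)^N[Y(a,z),Y(b,w)]=0$; since $Y(a,z)=a_{(-1)}$ does not involve $z$, expanding $(z-w)^N$ in powers of $z$ and reading off the top coefficient yields $[a_{(-1)},Y(b,w)]=0$. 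In particular, taking $b=\omega$ and comparing coefficients of $w$ gives $[a_{(-1)},L_n]=0$ for every $n$, and we single out $[a_{(-1)},L_0]=0$.

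The third and final step evaluates this commutator on the vacuum. Using $a_{(-1)}|0\rangle=a$ (vacuum axiom) and $L_0|0\rangle=0$ (a consequence of $L_n|0\rangle=\omega_{(n+1)}|0\rangle=0$ for $n\geq -1$), we obtain
\[
L_0 a = L_0\bigl(a_{(-1)}|0\rangle\bigr)=a_{(-1)}\bigl(L_0|0\rangle\bigr)=0.
\]
Since $a\in V_m$ means $L_0 a=ma$, this gives $ma=0$, so $a=0$ when $m\geq 1$; thus $\ker T\subset V_0$. I expect no serious obstacle here: the only point that requires a moment of care is the formal-power-series argument that extracts $[a_{(-1)},Y(b,w)]=0$ from locality of a $z$-independent field, but this is a standard and routine manipulation, and the rest of the proof is essentially dictated by the axioms together with Proposition \ref{immediateconseq}.
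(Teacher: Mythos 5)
Your proof is correct, but it follows a genuinely different route from the paper's. The paper's argument (stated in one line) is weight-theoretic: using the $sl_2$-triple $\{L_{-1},L_0,L_1\}$ with $[L_1,L_{-1}]=2L_0$, together with the fact that $L_1$ lowers degree and the grading is bounded below (so $L_1^{m+1}a=0$ for homogeneous $a\in V_m$), one computes $L_{-1}L_1^k a=-k(2m-k+1)L_1^{k-1}a$ for $a\in\ker L_{-1}$ and descends from $k=m+1$ to $k=1$ to force $a=0$ when $m\geq 1$. Your argument instead never touches $L_1$: you use the mode identity $(Ta)_{(n)}=-n\,a_{(n-1)}$ to collapse $Y(a,z)$ to the constant endomorphism $a_{(-1)}$, invoke locality to show that a $z$-independent field commutes with every $Y(b,w)$ (in particular with $L_0$), and then evaluate on the vacuum via $a_{(-1)}|0\rangle=a$ to get $L_0a=0$. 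Each step checks out, including the extraction of $[a_{(-1)},Y(b,w)]=0$ from $(z-w)^N[a_{(-1)},Y(b,w)]=0$ by reading off the coefficient of $z^N$. What your route buys is the stronger and standard characterization of $\ker T$ as the space of elements whose fields are constant and central, at the cost of invoking the locality axiom; the paper's route is shorter if one is willing to run the $sl_2$ computation, and stays entirely within the Virasoro action and the degree condition.
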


\begin{proof} It follows from the standard $sl_2$ relations of $\{L_{-1},L_0,L_1\}$ and degree condition. \qedhere
\end{proof}

\subsection{Dijkgraaf's integrable condition}\label{Dic}

Following Dijkgraaf's work \cite{Dijk1996master}, we introduce the following integrable condition\footnote{It is slightly different from Dijkgraaf's original statement, where he requires that the residue of OPE $A(z)B(w)$ belongs to $\ker(T)$, see \cite{Dijk1996master} (2.8). We find it more natural to work with $\im(T)$ as in Definition \ref{intcdt}.}.

\begin{defn}\label{intcdt} Let $(V, |0\rangle, Y(\cdot,z),\omega)$ be a $\mathbb{Z}_{\geq0}$-graded conformal vertex algebra. We say a subvertex algebra $\mathcal{A}\subset V$ satisfies the Dijkgraaf's condition, if
\begin{itemize}
\item $\mathcal{A}\cap V_1=0$,
\item For any $a,b\in\mathcal{A}$,
   $$a_{(0)}b\in T\mathcal{A},$$
   or equivalently,
   $$\Res_{z=w}Y(a,z)Y(b,w) \text{ is a total derivative.}$$
\end{itemize}
\end{defn}

In the derivation of Dijkgraaf's master equation, we will furthermore require that $\mathcal{A}$ has a countable number of elements $\{v_i\}_{i\in\mathbb{N}}$, such that
\begin{itemize}
  \item $\{v_i\}_{i\in\mathbb{N}}\subset \mathcal{A}_{>0}:=\mathcal{A}\cap \oplus_{n>0}V_n$ ,
  \item there exist constants $\{c^k_{ij}\}$ in $\mathbb{C}$, such that $$v_{j\ (0)}v_i-T\sum_{k\in\mathbb{N}}c^k_{ij}v_k\in T^2\mathcal{A}.$$
\end{itemize}

Immediately, we have
\begin{lem}\label{dijkcor} Let the subvertex algebra $\mathcal{A}\subset V$ satisfy the Dijkgraaf's condition. Then
\begin{enumerate}
  \item for $a,b\in\mathcal{A}$, we have
        $$[a_{(0)},b_{(0)}]=0.$$
  \item $T(\mathcal{A}_0)=0$.
\end{enumerate}
\end{lem}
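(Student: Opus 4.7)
My plan is to prove the two parts independently, with (1) reducing to a specialization of the Borcherds formula and (2) following from a degree count. Both parts should be short once the right pieces are assembled.

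For part (1), I would first specialize the Borcherds formula (Proposition \ref{nthprod}(2)) at $l = m = n = 0$. The binomials $\binom{0}{j}$ and $\binom{0}{k}$ kill all terms except $j=0, k=0$, so the formula collapses to
$$[a_{(0)}, b_{(0)}] = (a_{(0)}b)_{(0)}.$$
This identity is the $l=0,m=0$ analogue of the cases displayed in Corollary \ref{Borcherdsspec}, and is the only place I really need the Borcherds relation. To finish, I invoke Dijkgraaf's condition to write $a_{(0)}b = Tc$ for some $c\in\mathcal{A}$, and then apply Proposition \ref{immediateconseq}(2), which gives $(Tc)_{(0)} = -0\cdot c_{(-1)} = 0$.

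For part (2), the plan is even simpler. First I would note that any subvertex algebra is automatically $T$-stable: for $a\in\mathcal{A}$, the translation axiom combined with the vacuum axiom gives $Ta = a_{(-2)}|0\rangle$, which lies in $\mathcal{A}$ since $\mathcal{A}$ contains $|0\rangle$ and is closed under all $(n)$-products. Because $T$ has degree $+1$ (Proposition \ref{immediateconseq}(1) with the standard grading), it carries $\mathcal{A}_0 := \mathcal{A}\cap V_0$ into $\mathcal{A}\cap V_1$. But the first clause of Dijkgraaf's condition states exactly that $\mathcal{A}\cap V_1 = 0$, so $T(\mathcal{A}_0) = 0$.

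I do not anticipate any genuine obstacle here; both arguments are essentially formal manipulations with the axioms. The only thing to be careful about is picking the correct specialization of the Borcherds formula in part (1) (since the case $(l,m,n)=(0,0,0)$ is not one of the three worked out in Corollary \ref{Borcherdsspec}) and remembering that a subvertex algebra is automatically preserved by $T$ in part (2).
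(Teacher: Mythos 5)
Your proof is correct and follows exactly the route the paper intends: the paper states this lemma without proof as an immediate consequence, and the natural argument is the one you give — the commutator specialization $[a_{(0)},b_{(0)}]=(a_{(0)}b)_{(0)}$ of the Borcherds formula combined with $a_{(0)}b\in T\mathcal{A}$ and $(Tc)_{(0)}=0$ from Proposition \ref{immediateconseq}(2) for part (1), and $T$-stability of $\mathcal{A}$ plus the degree count against $\mathcal{A}\cap V_1=0$ for part (2). No gaps.
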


\begin{rmk} Regarding $\mathcal{A}$ as a space encoding integrable deformations. One can always choose $\omega\in\mathcal{A}$, then $T(z)=Y(\omega,z)$ gives the energy-momentum tensor deformation. Therefore, the other elements $a\in\mathcal{A}_{>2}$ give the higher order (integrable) deformations.
\end{rmk}

We will give several examples of $\mathcal{A}$ in Subsection \ref{Dijkeg}. Before the examples, let us give one more important lemma associated to $\mathcal{A}$ for our future definition.

\begin{lem}\label{T-Inverse}
  There exists a well-defined linear operator
  $$
  T^{-1}: \mathcal{A}_{(0)}\mathcal{A}\rightarrow \mathcal{A}_{>0}
  $$
  which fits into the commutative diagram. 
  $$
\begin{tikzcd}
0 \arrow[r] & \mathcal{A}_0 \arrow[r] & \mathcal{A} \arrow[r, "T"] & T(\mathcal{A})                                                    \\
            &                         &                            & \mathcal{A}_{(0)}\mathcal{A} \arrow[hookrightarrow,u, "i"'] \arrow[lu, "T^{-1}"]
\end{tikzcd}
  $$
  In particular, we have  $T(T^{-1}a)=a, a\in\mathcal{A}_{(0)}\mathcal{A}$ and  
  $$-n(T^{-1}a)_{(n-1)}=a_{(n)},\quad \text{for } n\geq1.$$
\end{lem}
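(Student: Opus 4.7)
The plan is to show that $T$, which a priori has a kernel, becomes a bijection once restricted to $\mathcal{A}_{>0}$, and then to exploit the Dijkgraaf condition to produce the inverse on $\mathcal{A}_{(0)}\mathcal{A}$. The hoped-for diagram then essentially writes itself.

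First I would establish that $T$ restricted to $\mathcal{A}_{>0}$ is injective. Since $\mathcal{A}$ is a subvertex algebra, it is closed under the translation operator $T$, so $T$ raises the grading by one, sending $\mathcal{A}_n$ into $\mathcal{A}_{n+1}$. By Proposition \ref{kerT}, $\ker T \subset V_0$, so any element of $\mathcal{A}_{>0}$ killed by $T$ must be zero. Hence $T|_{\mathcal{A}_{>0}}$ is an injection of $\mathcal{A}_{>0}$ into $\mathcal{A}$.

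Next I would identify its image with all of $T\mathcal{A}$. By Lemma \ref{dijkcor}(2), $T$ vanishes on $\mathcal{A}_0$, so the canonical decomposition $b = b_0 + b_+$ with $b_0 \in \mathcal{A}_0$ and $b_+ \in \mathcal{A}_{>0}$ gives $Tb = Tb_+$. Thus $T(\mathcal{A}) = T(\mathcal{A}_{>0})$. The Dijkgraaf condition yields $\mathcal{A}_{(0)}\mathcal{A} \subset T\mathcal{A} = T(\mathcal{A}_{>0})$, so for each $a \in \mathcal{A}_{(0)}\mathcal{A}$ there is a unique $b_+ \in \mathcal{A}_{>0}$ with $Tb_+ = a$. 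Setting $T^{-1}a := b_+$ defines a linear operator that clearly fits into the commutative diagram and satisfies $T(T^{-1}a) = a$. The identity $-n(T^{-1}a)_{(n-1)} = a_{(n)}$ for $n \geq 1$ is then an immediate consequence of Proposition \ref{immediateconseq}(2), applied to the element $T^{-1}a \in \mathcal{A}$.

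There is no real obstacle to this argument; the only subtlety to watch for is the identification $\ker T \cap \mathcal{A}_{>0} = 0$, which rests on three ingredients that must be combined: the Dijkgraaf assumption $\mathcal{A} \cap V_1 = 0$ (so that $\mathcal{A}_0$ cannot leak into the image of $T|_{\mathcal{A}_0}$ in a nontrivial way, forcing $T(\mathcal{A}_0) = 0$), the fact that $\mathcal{A}$ is stable under $T$ with $T$ raising degree by one, and Proposition \ref{kerT} confining the kernel of $T$ to degree zero.
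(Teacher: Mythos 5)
Your proposal is correct and follows essentially the same route as the paper: both arguments use the Dijkgraaf condition to write $a=Tc$, project $c$ onto $\mathcal{A}_{>0}$ (your $b_+=\operatorname{Pr}(c)$), and then invoke $T(\mathcal{A}_0)=0$ together with $\ker T\subset V_0$ to see that this positive-degree preimage is unique, with the mode identity following from Proposition \ref{immediateconseq}(2). The only cosmetic difference is that you package the well-definedness check as the injectivity of $T|_{\mathcal{A}_{>0}}$, which is the same computation.
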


\begin{proof} For $a,b\in\mathcal{A}$, assume $a_{(0)}b\neq0$. Then $\exists~c\in\mathcal{A}$ such that $a_{(0)}b=Tc$. Define
$$T^{-1}(a_{(0)}b)=\operatorname{Pr}(c),$$
here $\operatorname{Pr}:V\rightarrow \oplus_{n>0}V_n$ is the natural projection. By Lemma \ref{dijkcor} (2), we have $T(c-\operatorname{Pr}(c))=0$. Then
$$T(\operatorname{Pr}(c))=T(c)=a_{(0)}b.$$
Now we show $T^{-1}$ is well-defined, that is, it does not depend on the choice of $c$. Suppose that there exists another element $c'\in\mathcal{A}$, such that
$$T(c')=T(c)=a_{(0)}b,$$
then
$$T(\operatorname{Pr}(c')-\operatorname{Pr}(c))=0.$$
By Proposition \ref{kerT}, $\ker(T)\subset V_0$, then $\operatorname{Pr}(c')=\operatorname{Pr}(c)$. The remaining part follows from Proposition \ref{immediateconseq}.
\end{proof}

\subsubsection{Examples}\label{Dijkeg}
\begin{eg}[Vertex operator algebra from Novikov algebra]
This example is studied in \cite{BaiLiPei}.
We first recall the definition of a Novikov algebra (see \cite{BaiLiPei} and references therein). A Novikov algebra is a non-associative algebra $A$ with the product satisfying 
  \begin{align*}
    (ab)c-a(bc) & =(ba)c-b(ac) \\
    (ab)c &= (ac)b
  \end{align*}
  for $a,b,c\in \mathcal{A}$.  We also assume that $A$ is equipped with a symmetric bilinear form $\langle\cdot,\cdot\rangle$, satisfying
  \begin{equation}\label{bilinearform}
  \langle ab,c\rangle=\langle a,bc\rangle=\langle ba,c\rangle, ~\forall ~a,b,c \in A.
  \end{equation}

  Set
  $$
\widehat{A}=A\otimes \mathbb{C}[t,t^{-1}]\oplus\C\mathbf{c},  ~~\widehat{A}_+=A\otimes \mathbb{C}[t]\oplus \mathbb{C}\mathbf{c},  ~~   \widehat{A}_-=A\otimes t^{-1}\mathbb{C}[t^{-1}].
  $$
  Let $l\in \mathbb{C}$ and denote by $\mathbb{C}_l$ the one-dimensional $\widehat{A}_+$ -module with $\mathbf{c}$ acting as scalar $l$ and with $A\otimes \mathbb{C}[t]$ acting trivially. There exists a vertex algebra structure on the induced module
  $$
  V_{A}=U(\widehat{A})\otimes_{U(\widehat{A}_+)}\mathbb{C}_l,
  $$
  such that $(V_{A})_2=A$ and for any $a,b\in A$
  $$
  Y(a,z)Y(b,w)\sim \frac{\partial_wY(ba,w)}{z-w}+\frac{Y(ab,w)+Y(ba,w)}{(z-w)^2}+\frac{1}{12}\frac{\langle a,b\rangle\mathbf{c}}{(z-w)^3}.
  $$
  Then we can choose $\mathcal{A}\subset V_{A}$ to be the subvertex algebra generated by $|0\rangle$ and $A$ with
  \[a_{(0)}b=T(ba),\quad a_{(1)}b=ab+ba,\quad a_{(3)}b=\frac{1}{2}l\langle a,b\rangle|0\rangle,\quad a_{(2)}b=a_{(k\geq4)}b=0,\]
  for any $a,b\in A$. Let $\{v_i\}$ be a basis of $A$, then the coefficients $c^k_{ij}$ in Definition \ref{intcdt} are the structure constants of the Novikov algebra $A$.
\end{eg}

\begin{rmk}
A unital Novikov algebra $A$ with a symmetric bilinear form satisfying \eqref{bilinearform} amounts to a Frobenius algebra, i.e., a unital commutative and associative algebra with a non-degenerate symmetric and associative form. For a Frobenius algebra
$A$, a similar construction implies that the associated vertex algebra $L(A)$ \cite{Lam} is isomorphic to the corresponding Virasoro algebra, and $\big(L(A)_2,~a\cdot b:=a_{(1)}b\big)\cong A$ as an algebra. Thus, the structure constants $c_{ij}^k$ in the Frobenius algebra coincide with those in our definition. Such structure constants have important applications in the Poisson brackets of hydrodynamic type \cite{BN85}.  
\end{rmk}

\begin{eg}[$W_{\infty}$ algebra]\label{Walg} In terms of the free boson $\phi(z)$ with OPE 
$$\phi(z)\phi(w)\sim\log(z-w),$$
the fields of $W_{1+\infty}$ vertex operator algebra are given through
$$:e^{\phi(z)-\phi(w)}:=1+\sum_{k\geq1}\frac{(z-w)^k}{k!}W^{(k)}(w).$$

That is, for $k\geq1$,
\begin{equation}\label{Wk}
W^{(k)}(z)=\sum_{\sum_{i\geq1}ik_i=k}\frac{k!}{\prod k_i!}:\prod_i\left(\frac{1}{i!}\partial^i\phi\right)^{k_i}:.
\end{equation}

By the boson-fermion correspondence, a free boson is equivalent to a pair of fermions, that is
$$\psi=:e^{\phi}:,\quad \psi^{\dag}=:e^{-\phi}:\quad \text{and}\quad \partial\phi=:\psi\psi^{\dag}:.$$
where the OPE of the pair of fermions are given by
$$\psi(z)\psi^{\dag}(w)\sim \frac{1}{z-w}.$$
It is known that
$$:\psi(z)\psi^{\dag}(w):=\frac{1}{z-w}\left(:e^{\phi(z)-\phi(w)}:-1\right).$$

\noindent So we can also express $W^{(k)}(z)$ in terms of fermions,
$$W^{(k)}(z)=k:(\partial^{k-1}\psi)\psi^{\dag}:.$$
For simplicity, rescaling $W^{(k)}(z)$ by $\frac{1}{k}$, i.e.
$$\widetilde{W}^{(k)}(z):=\frac{1}{k}W^{(k)}(z)=:(\partial^{k-1}\psi)\psi^{\dag}:.$$

\begin{lem}\label{WOPE}$\Res_{z=w}\widetilde{W}^{(m)}(z) \widetilde{W}^{(n)}(w)$ is a total derivative, that is
\begin{align*}
\Res_{z=w}\widetilde{W}^{(m)}(z) \widetilde{W}^{(n)}(w)=~&\sum_{k=1}^{m-1}\binom{m-1}{k}(-1)^{k-1}\p^{k}\widetilde{W}^{(m+n-k-1)}(w)\\
                               =~&(m-1)\p \widetilde{W}^{(m+n-2)}(w)-\sum_{k=2}^{m-1}\binom{m-1}{k}(-\p)^{k}\widetilde{W}^{(m+n-k-1)}(w).
\end{align*}
\end{lem}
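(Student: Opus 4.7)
The plan is to compute the residue by a direct Wick expansion for the free fermions $\psi,\psi^\dag$ and then rearrange the result into the claimed total-derivative form using integration by parts inside normal ordering.

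Since $\psi(z)\psi^{\dag}(w)\sim\frac{1}{z-w}$ while $\langle\psi\psi\rangle=\langle\psi^{\dag}\psi^{\dag}\rangle=0$, only two single Wick contractions survive in $:(\partial^{m-1}\psi)\psi^{\dag}:(z)\cdot:(\partial^{n-1}\psi)\psi^{\dag}:(w)$: contracting $\psi^{\dag}(z)$ with $\partial^{n-1}\psi(w)$ produces $\frac{(n-1)!}{(z-w)^{n}}:(\partial^{m-1}\psi)(z)\psi^{\dag}(w):$, and contracting $\partial^{m-1}\psi(z)$ with $\psi^{\dag}(w)$ produces $\frac{(-1)^{m-1}(m-1)!}{(z-w)^{m}}:\psi^{\dag}(z)(\partial^{n-1}\psi)(w):$; the double contraction is a $c$-number proportional to $(z-w)^{-m-n}$ and does not affect the residue. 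I would then extract the $(z-w)^{-1}$ coefficient by Taylor-expanding the remaining $z$-field about $w$: the first contraction directly yields $\widetilde{W}^{(m+n-1)}(w)$, while the second, after using $:\psi^{\dag}\partial^{n-1}\psi:\,=\,-:\partial^{n-1}\psi\,\psi^{\dag}:$ and keeping the $(z-w)^{m-1}$ Taylor coefficient of $\psi^{\dag}(z)$, yields $(-1)^{m}:\partial^{n-1}\psi\cdot\partial^{m-1}\psi^{\dag}:(w)$, which is not yet a derivative of some $\widetilde{W}^{(q)}$.

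To bring this into the desired form I would first prove, by induction on $j$ from the Leibniz rule for $\partial$ on normal-ordered products, the identity
$$:A\,\partial^{j}B:\,=\,\sum_{k=0}^{j}(-1)^{k}\binom{j}{k}\,\partial^{j-k}\bigl(:(\partial^{k}A)\,B:\bigr).$$
Applying it with $A=\partial^{n-1}\psi$, $B=\psi^{\dag}$, $j=m-1$ and reindexing $k\mapsto m-1-k$ rewrites the second-contraction contribution as $-\sum_{k=0}^{m-1}(-1)^{k}\binom{m-1}{k}\,\partial^{k}\widetilde{W}^{(m+n-1-k)}(w)$. Its $k=0$ term $-\widetilde{W}^{(m+n-1)}(w)$ cancels exactly against the first contraction, and the remaining $k\ge 1$ sum is the first claimed identity; isolating the $k=1$ term $(m-1)\,\partial\widetilde{W}^{(m+n-2)}(w)$ from the rest yields the second form.

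The main technical obstacle is pinning down the fermionic Wick signs consistently. I would calibrate them against two sanity checks: $m=n=1$, where the residue of the $U(1)$-current OPE $J(z)J(w)\sim(z-w)^{-2}$ must vanish in agreement with the empty sum $\sum_{k=1}^{0}$ in the formula; and $m=2,\ n=1$, where the residue should equal $\partial\widetilde{W}^{(1)}(w)$. Once the signs are set, the cancellation of the non-derivative term $\widetilde{W}^{(m+n-1)}$ between the two single contractions is forced, and it is precisely this cancellation that makes $\Res_{z=w}\widetilde{W}^{(m)}(z)\widetilde{W}^{(n)}(w)$ a total derivative.
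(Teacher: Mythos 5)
Your proposal is correct and follows essentially the same route as the paper: both compute the two surviving fermionic Wick contractions of $:\partial^{m-1}\psi\,\psi^{\dag}:(z)\;:\partial^{n-1}\psi\,\psi^{\dag}:(w)$, extract the residue as $:\partial^{m+n-2}\psi\,\psi^{\dag}:+(-1)^{m}:\partial^{n-1}\psi\,\partial^{m-1}\psi^{\dag}:$, and then rearrange by integration by parts inside the normal ordering. The only difference is organizational: the paper exhibits an explicit telescoping antiderivative and converts it to the binomial sum by a separate induction, whereas your closed-form Leibniz-inversion identity produces the binomial sum and the cancellation of the non-derivative $k=0$ term in one step --- a slightly cleaner finish to the same computation.
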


\begin{proof} Using the fermionic representation, we can compute
\begin{align*}
&\Res_{z=w}\widetilde{W}^{(m)}(z) \widetilde{W}^{(n)}(w)\\
=~&\Res_{z=w}\left(:(\partial^{m-1}\psi)\psi^{\dag}(z): ~:(\partial^{n-1}\psi)\psi^{\dag}(w):\right)\\
=~&\Res_{z=w}\left(\p_z^{m-1}\frac{1}{z-w}:\psi^{\dag}(z)\p^{n-1}\psi(w):+\p_w^{n-1}\frac{1}{z-w}:\p^{m-1}\psi(z)\psi^{\dag}(w):\right)\\
=~&(-1)^{m-2}:\p^{n-1}\psi(w)\p^{m-1}\psi^{\dag}(w):+:\p^{m+n-2}\psi(w)\psi^{\dag}(w):\\
=~&\p\left[:\p^{m+n-3}\psi\psi^{\dag}:-:\p^{m+n-4}\psi\p\psi^{\dag}:+\cdots+(-1)^{m-2}:\p^{n-1}\psi\p^{m-2}\psi^{\dag}:\right](w).
\end{align*}
Furthermore, one can prove by induction that
\begin{align*}
&:\p^{m+n-3}\psi\psi^{\dag}:-:\p^{m+n-4}\psi\p\psi^{\dag}:+:\p^{m+n-5}\psi\p^2\psi^{\dag}:-\cdots+(-1)^{m-2}:\p^{n-1}\psi\p^{m-2}\psi^{\dag}:\\
=~&\sum_{k=1}^{m-1}\binom{m-1}{k}(-\p)^{k-1}:(\p^{m+n-2-k}\psi \psi^{\dag}):\\
=~&\sum_{k=1}^{m-1}\binom{m-1}{k}(-\p)^{k-1}\widetilde{W}^{(m+n-k-1)}. \qedhere
\end{align*}
\end{proof}

Note that each field $\widetilde{W}^{(i)}(z)$ has degree $i$. The algebra generated by $\{\widetilde{W}^{(i)}(z)\}_{i\geq2}$ is the $W_{\infty}$ algebra. By the bosonic defining equation \eqref{Wk} of $W_{1+\infty}$ algebra, for each $\widetilde{W}^{(i)}(z)$, there is an element $v_i\in V$, such that
$$\widetilde{W}^{(i)}(z)=Y(v_i,z),\quad i\geq 2.$$
Then we consider the subvertex algebra $\mathcal{A}\subset V$ spanned by the vacuum state $|0\rangle$ and $\{v_i\}_{i\geq2}$. It is obvious that the $W_{\infty}$ algebra $\mathcal{A}$ satisfies \emph{Dijkgraaf's condition} and the structure constants $c_{ij}^k=(j-1)\delta_{k,i+j-2}$ in terms of $\{v_i\}$.
\end{eg}

\subsection{The operations $B$, $C$ and contact term algebra}\label{BCcontact}

Inspired by Dijkgraaf's work \cite{Dijk1996master}, we consider the following definitions.
\begin{defn}\label{defBC}
Let $\mathcal{A}\subset V$ be a vertex subalgebra satisfying Dijkgraaf's condition. We define the following two operations
$$B:\mathcal{A}^{\otimes2}\rightarrow \mathcal{A}, \quad C:\mathcal{A}^{\otimes2}\rightarrow \mathcal{A}$$
by
\begin{equation}\label{BC1}
\begin{aligned}
&B(a_i,a_j)=T^{-1}(a_{i\ (0)}a_j), \quad\text{(It is well defined by Lemma \ref{T-Inverse}.)}\\
&C(a_i,a_j)=a_{i\ (1)}a_j-B(a_i,a_j).
\end{aligned}
\end{equation}
\end{defn}

By Proposition \ref{nthprod} (1), the formulas for $n=0,1$ give
\begin{align*}
&a_{i\ (0)}a_j+a_{j\ (0)}a_i=\sum_{k\geq 1}(-1)^{k+1}\frac{T^k(a_{i\ (k)}a_j)}{k!},\\
&a_{i\ (1)}a_j=\sum_{k\geq 0}(-1)^{k+2}\frac{T^k(a_{j\ (k+1)}a_i)}{k!}.
\end{align*}
Then, one can easily derive the important relation
\begin{equation}\label{BC2}
C(a_i,a_j)-B(a_j,a_i)\in \im T.
\end{equation}
Note also that $B(a_i,a_j), C(a_i,a_j)\in\mathcal{A}_{>0}$.

Furthermore, we can extend these operations on ${\mathcal{A}}^{\otimes n}$ by
\begin{align*}
T_{i}(a_{1}\otimes\cdots\otimes a_{n}):&=a_{1}\otimes \cdots\otimes\displaystyle \underset{\text{i-th}}{Ta_i}\otimes\cdots \otimes a_{n}, \ 1\leq i\leq n,\\
B_{i\rightarrow j}(a_{1}\otimes\cdots\otimes a_{n}):&=a_{1}\otimes \displaystyle \underset{\text{i-th}}{|0\rangle}\otimes\cdots \otimes\underset{\text{j-th}} {B(a_i,a_j)} \otimes \cdots\otimes a_{n}, \ 1\leq i\neq j\leq n,\\
C_{i\rightarrow j}(a_{1}\otimes\cdots\otimes a_{n}):&=a_{1}\otimes \displaystyle \underset{\text{i-th}}{|0\rangle}\otimes\cdots \otimes\underset{\text{j-th}} {C(a_i,a_j)} \otimes \cdots\otimes a_{n}, \ 1\leq i\neq j\leq n.
\end{align*}

By definition, it is straight-forward to check that
\begin{lem}\label{trivialrel} Let $a=a_1\otimes\cdots\otimes a_n\in\mathcal{A}^{\otimes n}$, for distinct $i,j,k,l$, we have
\begin{align*}
    B_{i\rightarrow j}B_{k\rightarrow l}a= ~&B_{k\rightarrow l}B_{i\rightarrow j}a,\quad C_{i\rightarrow j}C_{k\rightarrow l}a=C_{k\rightarrow l}C_{i\rightarrow j}a;\\
    &B_{i\rightarrow j}C_{k\rightarrow l}a=C_{k\rightarrow l}B_{i\rightarrow j}a.
\end{align*}

\end{lem}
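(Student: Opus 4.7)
My plan is to prove all three identities by the same simple observation: when the indices $i,j,k,l$ are distinct, the two operations act nontrivially on disjoint tensor slots, so they trivially commute. Concretely, by the definitions given just above the lemma, $B_{i\to j}$ reads off $a_i$ and $a_j$, uses them to form $B(a_i,a_j)\in\mathcal{A}$, and then returns a new tensor whose only modified slots are the $i$-th (replaced by $|0\rangle$) and the $j$-th (replaced by $B(a_i,a_j)$). All other tensor slots are untouched, and the computed output $B(a_i,a_j)$ depends only on $a_i$ and $a_j$. The same is true for $C_{i\to j}$.

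The proof I would write is simply to apply the two operations in both orders to $a_1\otimes\cdots\otimes a_n$ and observe that each side equals the tensor whose $i$-th and $k$-th factors are $|0\rangle$, whose $j$-th factor is $B(a_i,a_j)$ (or $C(a_i,a_j)$), whose $l$-th factor is $B(a_k,a_l)$ (or $C(a_k,a_l)$), and whose remaining factors coincide with those of $a$. Here the assumption that $i,j,k,l$ are distinct is essential: it guarantees that when computing, say, $B_{k\to l}(B_{i\to j}a)$, the entries read off at positions $k$ and $l$ are still $a_k$ and $a_l$ (not altered by the previous $B_{i\to j}$), and the positions written to are distinct from those already modified. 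The three claimed identities — $[B_{i\to j},B_{k\to l}]a=0$, $[C_{i\to j},C_{k\to l}]a=0$, $[B_{i\to j},C_{k\to l}]a=0$ — then follow by inspection.

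There is no genuine obstacle: the content is purely bookkeeping about tensor positions, and no properties of the vertex algebra or of Dijkgraaf's condition enter the argument. The nontrivial commutation relations, where $\{i,j\}$ and $\{k,l\}$ overlap and one must invoke the Borcherds identity and the defining condition $a_{(0)}b\in T\mathcal{A}$, are treated separately in Theorem \ref{contrel}; the present lemma is the warm-up that isolates the geometrically disjoint case.
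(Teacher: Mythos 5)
Your proposal is correct and is exactly the argument the paper has in mind: the paper offers no proof at all, prefacing the lemma with ``By definition, it is straight-forward to check that,'' and your disjoint-slot bookkeeping is precisely the justification for that claim. The observation that distinctness of $i,j,k,l$ ensures the read and write positions never interfere is the whole content, and no vertex-algebra structure is needed.
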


Next, we consider the remaining non-trivial relations between $B$ and $C$.

\begin{thm}[Contact term relations]\label{contrel}
For $a=a_1\otimes\cdots\otimes a_n\in\mathcal{A}^{\otimes n}$, and for distinct $i,j,k$, we have
\begin{enumerate}
\setlength{\itemsep}{2pt}
 \item $B_{i\rightarrow k} B_{j\rightarrow k}a=B_{j\rightarrow k}B_{i\rightarrow k}a.$
 \item $[C_{i\rightarrow k},B_{j\rightarrow k}]a+B_{i\rightarrow k}B_{j\rightarrow k}a=B_{j\rightarrow k}C_{i\rightarrow j}a.$
 \item $[C_{i\rightarrow k}, C_{j\rightarrow k}]a\equiv C_{j\rightarrow k}C_{i\rightarrow j}a-C_{i\rightarrow k}C_{j\rightarrow i}a~\mod(\im(T)).$
\end{enumerate}
\end{thm}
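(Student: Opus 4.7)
All three identities involve only the tensor entries at the distinguished positions $i,j,k$, and the other slots are inert. I will reduce to $n=3$ at positions $1,2,3$ with $x=a_i$, $y=a_j$, $z=a_k$; the three identities then become equalities (respectively, a congruence modulo $\im T$) of elements of $\mathcal{A}$ sitting in the third tensor slot, obtained by iterated applications of $B(\cdot,\cdot)$ and $C(\cdot,\cdot)$.

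For (1), both sides collapse at the $k$-slot to $T^{-1}(x_{(0)}T^{-1}(y_{(0)}z))$ and $T^{-1}(y_{(0)}T^{-1}(x_{(0)}z))$. Applying $T$—which is injective on $\mathcal{A}_{>0}$ by Proposition \ref{kerT}—together with the facts that $a_{(0)}$ commutes with $T$ (Proposition \ref{immediateconseq}(2)) and that $[x_{(0)},y_{(0)}]=0$ (Lemma \ref{dijkcor}(1)) will bring both sides to $x_{(0)}y_{(0)}z$. For (2), I will expand the four terms at the $k$-slot, apply $T$ to kill the outermost $T^{-1}$, and use the translation rule $T(a_{(n)}b)=a_{(n)}Tb-n a_{(n-1)}b$ together with $[x_{(0)},y_{(0)}]=0$ to cancel the intermediate contributions; the claim then reduces to the Borcherds commutator
\[
x_{(1)}y_{(0)}z - y_{(0)}x_{(1)}z = (x_{(0)}y)_{(1)}z + (x_{(1)}y)_{(0)}z
\]
of Corollary \ref{Borcherdsspec}(1), combined with the identity $(T^{-1}(x_{(0)}y))_{(0)} = -(x_{(0)}y)_{(1)}$ from Lemma \ref{T-Inverse}.

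For (3) I first plan to establish two auxiliary facts: (a) the residue identity $C(y,z)_{(0)} = [y_{(1)},z_{(0)}]$, obtained by combining Corollary \ref{Borcherdsspec}(1) with the $T^{-1}$-derivative rule of Lemma \ref{T-Inverse}; and (b) that both $C(a,\cdot)$ and $C(\cdot,a)$ preserve $\im T$ on $\mathcal{A}$-arguments, since $C(a,Tw)=T(a_{(1)}w)$ and $C(Tw,a)=-w_{(0)}a\in T\mathcal{A}$ by Dijkgraaf's condition. Using the unconditional equivalence $C(a,b)\equiv B(b,a)\pmod{\im T}$ from \eqref{BC2} on the outermost $C$ of each of the four terms, I obtain
\[
\mathrm{LHS}\equiv T^{-1}\bigl([y_{(1)},z_{(0)}]x-[x_{(1)},z_{(0)}]y\bigr),\quad \mathrm{RHS}\equiv T^{-1}\bigl(z_{(0)}(C(x,y)-C(y,x))\bigr) \pmod{\im T}.
\]
Pushing $z_{(0)}$ through $C(x,y)-C(y,x)$ using $[z_{(0)},x_{(1)}]=(z_{(0)}x)_{(1)}$ and $[z_{(0)},x_{(0)}]=0$, and repeatedly invoking Corollary \ref{Borcherdsspec}(1), the two expressions will be matched up to terms of the form $T(\cdot)\in\im T$.

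The main obstacle is (3): $T^{-1}$ does \emph{not} descend to a well-defined operator on $\mathcal{A}/\im T$, so the substitution $C\leftrightarrow B$ modulo $\im T$ must be tracked carefully at each step. My remedy is to perform all substitutions inside arguments of the form $z_{(0)}(\cdot)$ or $[y_{(1)},z_{(0)}](\cdot)$, which by Dijkgraaf's condition land in $\mathcal{A}_{(0)}\mathcal{A}\subset T\mathcal{A}$ where $T^{-1}$ is well-behaved, and to absorb any residual $T(\cdot)$ terms only at the very end.
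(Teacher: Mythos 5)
Your treatments of (1) and (2) coincide with the paper's proof: for (1) the same chain (apply $T$ twice, use $T(a_{(0)}b)=a_{(0)}Tb$ from Proposition \ref{immediateconseq}, the commutativity $[x_{(0)},y_{(0)}]=0$ of Lemma \ref{dijkcor}, and $\ker T\subset V_0$), and for (2) the same reduction to Corollary \ref{Borcherdsspec}(1) combined with $(T^{-1}(x_{(0)}y))_{(0)}=-(x_{(0)}y)_{(1)}$ from Lemma \ref{T-Inverse}. Your auxiliary facts for (3), namely $C(y,z)_{(0)}=[y_{(1)},z_{(0)}]$ and the $\im T$-preservation of $C(a,T\cdot)$ and $C(T\cdot,a)$, are also correct.

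The gap is in the last step of (3). After replacing each outer $C$ by a $B$ via \eqref{BC2}, what you must show is that
\[
D:=[y_{(1)},z_{(0)}]x-[x_{(1)},z_{(0)}]y-z_{(0)}C(x,y)+z_{(0)}C(y,x)
\]
lies in $T^{2}\mathcal{A}$, not merely in $\im T$ (only then is $T^{-1}(D)\in\im T$); you correctly sense this issue, but the remedy you propose does not close it. Carrying out exactly the manipulations you list — $[z_{(0)},x_{(1)}]=(z_{(0)}x)_{(1)}$, $[z_{(0)},x_{(0)}]=0$, the translation rule, and parts (1),(2) — yields $D=T\bigl(y_{(1)}B(z,x)-x_{(1)}B(z,y)\bigr)$, and then applying Corollary \ref{Borcherdsspec}(1), parts (1),(2) and \eqref{BC2} to the factor $y_{(1)}B(z,x)-x_{(1)}B(z,y)$ reduces it, modulo $\im T$, back to $C(x,C(y,z))-C(y,C(x,z))-C(C(x,y),z)+C(C(y,x),z)$: the reduction is circular. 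The missing ingredient is a genuinely second-order mode identity. The paper supplies it as Corollary \ref{Borcherdsspec}(2), $[a_{(1)},b_{(1)}]=(a_{(0)}b)_{(2)}+(a_{(1)}b)_{(1)}$; in your formulation the equivalent fact you still need is $y_{(1)}B(z,x)-x_{(1)}B(z,y)\in\im T$, which can be proved from the $n=2$ skew-symmetry $x_{(2)}y+y_{(2)}x\in\im T$, the commutator $[z_{(0)},x_{(2)}]=(z_{(0)}x)_{(2)}$, and $(T^{-1}u)_{(1)}=-\tfrac{1}{2}u_{(2)}$ from Lemma \ref{T-Inverse} (one finds $S\equiv -S \bmod \im T$ for $S=y_{(1)}B(z,x)-x_{(1)}B(z,y)$). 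None of these appear in your plan; until one of them is added, part (3) is not proved.
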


\begin{proof} (1) By definition, the $k$-th element of $B_{i\rightarrow k}B_{j\rightarrow k}a$ is
$$B(a_i,B(a_j,a_k))=T^{-1}(a_{i\ (0)}(T^{-1}(a_{j\ (0)}a_k))).$$
First, we compute $T(a_{i\ (0)}(T^{-1}(a_{j\ (0)}a_k)))$. By Proposition \ref{immediateconseq} (2), Lemma \ref{dijkcor} (1), we have
 \begin{align*}
     T(a_{i\ (0)}(T^{-1}(a_{j\ (0)}a_k)))& =a_{i\ (0)}(a_{j\ (0)}a_k)\quad\text{(Proposition \ref{immediateconseq}, $T(a_{(0)}b)=a_{(0)}Tb$.)}\\
     &=a_{j\ (0)}(a_{i\ (0)}a_k) \quad \text{(Lemma \ref{dijkcor}, $[a_{i\ (0)},a_{j\ (0)}]=0$.)}\\
     &=T(a_{j\ (0)}(T^{-1}(a_{i\ (0)}a_k))).
 \end{align*}

Since $\ker T\subset V_0$, by degree reason, we have
$$a_{i\ (0)}(T^{-1}(a_{j\ (0)}a_k))=a_{j\ (0)}(T^{-1}(a_{i\ (0)}a_k))\in\mathcal{A}_{>0}.$$
Furthermore,
$$T^{-1}(a_{i\ (0)}(T^{-1}(a_{j\ (0)}a_k)))=T^{-1}(a_{j\ (0)}(T^{-1}(a_{i\ (0)}a_k))).$$

\vskip 0.2cm
(2) By definition, the $k$-th elements of $C_{i\rightarrow k}B_{j\rightarrow k}a$ and $B_{j\rightarrow k}C_{i\rightarrow k}a$ are respectively given by
\begin{align*}
&C(a_i, B(a_j, a_k))=a_{i\ (1)}T^{-1}(a_{j\ (0)}a_k)-B(a_i,B(a_j,a_k)),\\
&B(a_j, C(a_i, a_k))=T^{-1}(a_{j\ (0)}(a_{i\ (1)}a_k))-B(a_j,B(a_i,a_k)).
\end{align*}
By Theorem \ref{contrel} (1) we just proved, the $k$-th element of $[C_{i\rightarrow k},B_{j\rightarrow k}]a$ is
\begin{equation}\label{keyone}
    a_{i\ (1)}T^{-1}(a_{j\ (0)}a_k)-T^{-1}(a_{j\ (0)}(a_{i\ (1)}a_k)).
\end{equation}
Applying $T$ to it and using Proposition \ref{immediateconseq}, Corollary \ref{Borcherdsspec}, Lemma \ref{T-Inverse}, we can compute
\begin{align*}
   & a_{j\ (0)}(a_{i\ (1)}a_k)-T(a_{i\ (1)}T^{-1}(a_{j\ (0)}a_k))  \\
  =~& a_{j\ (0)}(a_{i\ (1)}a_k)-a_{i\ (1)}(a_{j\ (0)}a_k)+a_{i\ (0)}T^{-1}(a_{j\ (0)}a_k) \quad \text{(Proposition \ref{immediateconseq} (2))}\\
  =~&-(a_{i\ (0)}a_j)_{(1)}a_k-(a_{i\ (1)}a_j)_{(0)}a_k+a_{i\ (0)}T^{-1}(a_{j\ (0)}a_k)\quad \text{(Corollary \ref{Borcherdsspec} (1))}\\
  =~&(T^{-1}(a_{i\ (0)}a_j))_{(0)}a_k-(a_{i\ (1)}a_j)_{(0)}a_k+a_{i\ (0)}T^{-1}(a_{j\ (0)}a_k)\quad \text{(Lemma \ref{T-Inverse})}\\
  =~&-(C(a_i,a_j))_{(0)}a_k+T(B(a_i, B(a_j, a_k))).
\end{align*}
Applying $T^{-1}$ to both sides, we get the desired equation.

\vskip 0.2cm
(3) By definition, the $k$-th elements of $C_{i\rightarrow k}C_{j\rightarrow k}a$ and $C_{j\rightarrow k}C_{i\rightarrow k}a$ are respectively given by
\begin{align*}
C(a_i, C(a_j, a_k))=~&a_{i\ (1)}(a_{j\ (1)}a_k)-a_{i\ (1)}T^{-1}(a_{j\ (0)}a_k)\\
                    &-T^{-1}\big(a_{i\ (0)}(a_{j\ (1)}a_k)\big)+B(a_i,B(a_j,a_k)),\\
C(a_j, C(a_i, a_k))=~&a_{j\ (1)}(a_{i\ (1)}a_k)-a_{j\ (1)}T^{-1}(a_{i\ (0)}a_k)\\
                    &-T^{-1}\big(a_{j\ (0)}(a_{i\ (1)}a_k)\big)+B(a_j,B(a_i,a_k)).
\end{align*}

By (1) (2) we just proved and Corollary \ref{Borcherdsspec}, we have
\begin{align*}
  &C(a_i, C(a_j, a_k))-C(a_j, C(a_i, a_k))\\
=~&a_{i\ (1)}(a_{j\ (1)}a_k)-a_{i\ (1)}T^{-1}(a_{j\ (0)}a_k)-T^{-1}\big(a_{i\ (0)}(a_{j\ (1)}a_k)\big)\\
&-\bigg(a_{j\ (1)}(a_{i\ (1)}a_k)-a_{j\ (1)}T^{-1}(a_{i\ (0)}a_k)-T^{-1}\big(a_{j\ (0)}(a_{i\ (1)}a_k)\big)\bigg)\\
=~&\big(a_{i\ (0)}a_j\big)_{(2)}a_k+\big(a_{i\ (1)}a_j\big)_{(1)}a_k+B(C(a_j,a_i),a_k)-B(C(a_i,a_j),a_k)\\
&\qquad\text{(By Corollary \ref{Borcherdsspec} and \eqref{keyone}, (1) (2).)}\\
=~&-2B(a_i,a_j)_{(1)}a_k+B(a_{i\ (1)}a_j,a_k)+C(a_{i\ (1)}a_j,a_k)+B(C(a_j,a_i),a_k)-B(C(a_i,a_j),a_k)\\
=~&-2B(B(a_i,a_j),a_k)-2C(B(a_i,a_j),a_k)+B(B(a_i,a_j),a_k)+B(C(a_i,a_j),a_k)\\
  &+C(B(a_i,a_j),a_k)+C(C(a_i,a_j),a_k)+B(C(a_j,a_i),a_k)-B(C(a_i,a_j),a_k)\\
=~&-B(a_i,a_j)_{(1)}a_k+C(C(a_i,a_j),a_k)-C(C(a_j,a_i),a_k)+C(a_j,a_i)_{(1)}a_k\\
\equiv~&C(C(a_i,a_j),a_k)-C(C(a_j,a_i),a_k)\quad \mod (\im(T)).
\end{align*}
The last step is obtained as follows. First, by \eqref{BC2}, there exists $b\in\mathcal{A}$ such that
\[C(a_j,a_i)_{(1)}a_k-B(a_i,a_j)_{(1)}a_k=(Tb)_{(1)}a_k.\]
Then by Proposition \ref{nthprod} (1), Proposition \ref{immediateconseq} (2) and the integrable condition, we have
\[(Tb)_{(1)}a_k\equiv a_{k~(1)}Tb\equiv a_{k~(0)}b\equiv0\quad\mod(\im(T)).\qedhere\]
\end{proof}

\begin{rmk} 
By the above relation (1), for any non-empty proper subset $I\subset\{1,\ldots,n\}$ and any $k\in\{1,\ldots,n\}\setminus I$, we can define
$$B_{I\rightarrow k}a:=\prod_{i\in I}B_{i\rightarrow k}a.$$
More generally, one can consider two disjoint sets $I$ and $M$, and a map $\phi:I\rightarrow M$, and define
$$B^{\phi}_{I\rightarrow M}:=\prod_{k\in M,\phi^{-1}(k)\neq \emptyset}B_{\phi^{-1}(k)\rightarrow k}.$$
Such a definition will appear in our study of contact iterations later. 

However for $B_{j\rightarrow k}B_{i\rightarrow j}$ and $B_{i\rightarrow k}B_{j\rightarrow i}$,  they are not equal to each other.
\end{rmk}

\begin{prop}\label{Useful}
Let $I\subset \{1,\dots,n\}-\{i,k\}$ be a non-empty subset, we have
$$
  [C_{i\rightarrow k}, B_{I\rightarrow k}]+|I|\cdot B_{I\cup\{i\}\rightarrow k}=\sum_{l\in I}B_{I\rightarrow k}C_{i\rightarrow l}.
  $$
\end{prop}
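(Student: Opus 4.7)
My plan is to prove the identity by induction on the cardinality $|I|$, using Theorem \ref{contrel}(2) as the base case and the derivation property of the commutator for the inductive step.

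For $|I|=1$, say $I=\{j\}$, the claim reduces to
$$[C_{i\rightarrow k},B_{j\rightarrow k}] + B_{\{i,j\}\rightarrow k} = B_{j\rightarrow k}C_{i\rightarrow j},$$
which is exactly Theorem \ref{contrel}(2) (noting that $B_{i\rightarrow k}B_{j\rightarrow k} = B_{\{i,j\}\rightarrow k}$ by Theorem \ref{contrel}(1)). For the inductive step, write $I = I'\cup\{j\}$ with $j\notin I'$, so that $B_{I\rightarrow k} = B_{j\rightarrow k}B_{I'\rightarrow k}$, and expand
$$[C_{i\rightarrow k},B_{I\rightarrow k}] = [C_{i\rightarrow k},B_{j\rightarrow k}]B_{I'\rightarrow k} + B_{j\rightarrow k}[C_{i\rightarrow k},B_{I'\rightarrow k}].$$

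Next, I would apply the base case to the first bracket and the induction hypothesis to the second, obtaining
$$[C_{i\rightarrow k},B_{I\rightarrow k}] = \bigl(B_{j\rightarrow k}C_{i\rightarrow j} - B_{\{i,j\}\rightarrow k}\bigr)B_{I'\rightarrow k} + B_{j\rightarrow k}\Bigl(\sum_{l\in I'}B_{I'\rightarrow k}C_{i\rightarrow l} - |I'|\,B_{I'\cup\{i\}\rightarrow k}\Bigr).$$
The main bookkeeping step is then to simplify each factor using the commutativity statements from Lemma \ref{trivialrel} and Theorem \ref{contrel}(1). Since $I'\subset\{1,\ldots,n\}\setminus\{i,k\}$ and $j\notin I'\cup\{k\}$, the operator $C_{i\rightarrow j}$ commutes with every $B_{l\rightarrow k}$ for $l\in I'$, giving $B_{j\rightarrow k}C_{i\rightarrow j}B_{I'\rightarrow k} = B_{I\rightarrow k}C_{i\rightarrow j}$; similarly $B_{\{i,j\}\rightarrow k}B_{I'\rightarrow k} = B_{I\cup\{i\}\rightarrow k}$, $B_{j\rightarrow k}B_{I'\rightarrow k}C_{i\rightarrow l} = B_{I\rightarrow k}C_{i\rightarrow l}$, and $B_{j\rightarrow k}B_{I'\cup\{i\}\rightarrow k} = B_{I\cup\{i\}\rightarrow k}$.

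Collecting terms yields
$$[C_{i\rightarrow k},B_{I\rightarrow k}] = \sum_{l\in I}B_{I\rightarrow k}C_{i\rightarrow l} - (|I'|+1)\,B_{I\cup\{i\}\rightarrow k},$$
and since $|I'|+1 = |I|$, this is the desired identity. The only real subtlety is checking the index hypotheses needed to invoke Lemma \ref{trivialrel} at each substitution — this is the step I expect to be the main (although still routine) obstacle — but it is ensured by the ambient assumption $I\subset\{1,\ldots,n\}\setminus\{i,k\}$ together with the choice $j\in I$, $j\notin I'$.
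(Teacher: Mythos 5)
Your proposal is correct and follows essentially the same route as the paper: induction on $|I|$ with Theorem \ref{contrel}(2) as the base case, the Leibniz rule for the commutator in the inductive step, and Lemma \ref{trivialrel} together with Theorem \ref{contrel}(1) to rearrange the $B$ and $C$ factors. The only (immaterial) difference is that you split off $B_{j\rightarrow k}$ on the left rather than the right in the Leibniz expansion, and your index checks for invoking the commutativity lemmas are exactly the ones needed.
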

\begin{proof}
  We prove this lemma by induction on $|I|$. For $|I|=1$, this is exactly Theorem \ref{contrel} (2). For $|I|>1,j\in I$, we have
  \begin{align*}
     & [C_{i\rightarrow k}, B_{I\rightarrow k}]+|I|\cdot B_{I\cup\{i\}\rightarrow k} \\
   =~&\left([C_{i\rightarrow k}, B_{I-\{j\}\rightarrow k}]B_{j\rightarrow k}+(|I|-1)\cdot B_{I\cup\{i\}\rightarrow k}\right)+\left(B_{I-\{j\}\rightarrow k}[C_{i\rightarrow k}, B_{j\rightarrow k}]+B_{I\cup\{i\}\rightarrow k}\right)\\
   =~&\sum_{l\in I-\{j\}}B_{I-\{j\}\rightarrow k}C_{i\rightarrow l}B_{j\rightarrow k}+B_{I-\{j\}\rightarrow k}B_{j\rightarrow k}C_{i\rightarrow j}\\
    &\qquad\text{(By induction hypothesis and Theorem \ref{contrel} (2))}\\
   =~&\sum_{l\in I-\{j\}}B_{I\rightarrow k}C_{i\rightarrow l}+B_{I-\{j\}\rightarrow k}B_{j\rightarrow k}C_{i\rightarrow j} \quad\text{(Lemma \ref{trivialrel})}\\
   =~&\sum_{l\in I}B_{I\rightarrow k}C_{i\rightarrow l}.\qedhere
  \end{align*}
\end{proof}

\section{Genus one partition function and Dijkgraaf's master equation}\label{Genusoneandmasterequation}

In this section, we will discuss the mathematical definitions of various genus one partition functions via the following two-step construction.
\begin{itemize}
    \item \textbf{Step 1.} Using the conformal block (Definition \ref{genus1cb}) to get a meromorphic elliptic function on the configuration space of elliptic curves 
    $$(a_1,\ldots,a_n)\mapsto S_n((a_1,z_1),\ldots,(a_n,z_n),\tau).$$
    \item \textbf{Step 2.} Applying the various integrals on the function $S_n((a_1,z_1),\ldots,(a_n,z_n),\tau)$:
    \begin{itemize}
        \item (pure type 1). regularized integral (Subsection \ref{regularizedintegrals}, Equation \eqref{regint})
$$\dashint_{E_{\tau}^n}S_n((a_1,z_1),\ldots,(a_n,z_n),\tau)\vol;$$
        \item (pure type 2). interated $A$-cycle integral
$$\oint_{A_n}\cdots\oint_{A_1}S_n((a_1,z_1),\ldots,(a_n,z_n),\tau)~dz_1\cdots dz_n;$$
        \item (mixed type). regularized integral and modified iterated $A$-cycle integral (Definitions \ref{iteratedmodifiedA}, \ref{mixtype})
    \end{itemize}
\end{itemize}

The mixed type is very subtle, since the $A$-cycle integral does not preserve the elliptic property. We need to make some modifications if we want to combine the regularized integral with the $A$-cycle integral.  Such modifications turn out to be related to the contact operators $B$ and $C$. In Subsection \ref{1st2ndgenus1pf}, we define two pure types of partition functions. In Subsection \ref{secmodifiedA}, we introduce the key idea and construction on the modified $A$-cycle integral, and discuss the relation among the various iterated integrals and contact iterations. In Subsection \ref{mixDijk}, we define the mixed partition function, and state the main theorems about contact equation (Theorems \ref{ModifiedAcycleElliptic} and \ref{Contactequation}) which lead to Dijkgraaf's master equation (Theorem \ref{DijkgraafMaster}). In Subsection \ref{proofofpropthm}, we give the proof of Theorems \ref{ModifiedAcycleElliptic} and \ref{Contactequation}.

\subsection{Genus one conformal block}\label{g1block}

In this subsection, we recall the definition of the genus one conformal block \cite{1996Modular}. We first introduce some notations.

For $n\in\Z_{>0}$, let $\mathcal{F}_n$ denote the space of meromorphic functions $f(z_1,\dots, z_n, \tau)$ on $\mathbb{C}^n\times \mathbf{H}$  such that for fixed $\tau$, $f(z_1,\dots, z_n, \tau)$ is doubly periodic for each variable $z_i$ with periods $s+t\tau (s,t\in\mathbb{Z})$ and the possible poles of $f(z_1,\dots, z_n, \tau)$ lie in the set
$$
\{(z_1,\dots, z_n, \tau)|z_i=z_j+(s+t\tau), i\neq j; s,t\in\mathbb{Z}\}.
$$

\begin{defn}\label{genus1cb}
   Let $\{V, |0\rangle, T, Y(, z), \omega\}$ be a conformal vertex algebra. A system of $\operatorname{maps} S=\left\{S_{n}\right\}_{n=1}^{\infty}$
$$S_{n}: V^{\otimes n} \rightarrow {\mathcal{F}}_{n},\quad a_{1}\otimes\cdots\otimes a_{n} \mapsto S\left(\left(a_{1}, z_{1}\right), \ldots,\left(a_{n}, z_{n}\right), \tau\right)
$$
is said to satisfy the \emph{genus-one property} if the following axioms $(1)-(6)$ are satisfied.

(1) $S\left(\left(a_{1}, z_{1}\right),\left(a_{2}, z_{2}\right), \ldots,\left(a_{n}, z_{n}\right), \tau\right)$ is linear for each $a_{i}$;

(2) For every permutation $\sigma$ of $n$ letters,
$$
S\left(\left(a_{\sigma(1)}, z_{\sigma(1)}\right), \ldots,\left(a_{\sigma(n)}, z_{\sigma(n)}\right), \tau\right)=S\left(\left(a_{1}, z_{1}\right), \ldots,\left(a_{n}, z_{n}\right), \tau\right);
$$

(3)
$$
S\left((|0\rangle, z),\left(a_{1}, z_{1}\right), \ldots,\left(a_{n}, z_{n}\right), \tau\right)=S\left(\left(a_{1}, z_{1}\right), \ldots,\left(a_{n}, z_{n}\right), \tau\right);
$$

(4)
$$S\left(\left(L_{-1} a_{1}, z_{1}\right), \ldots,\left(a_{n}, z_{n}\right), \tau\right)=\frac{d}{d z_{1}} S\left(\left(a_{1}, z_{1}\right), \ldots,\left(a_{n}, z_{n}\right), \tau\right);$$

(5)
$$
\begin{array}{l}
\operatorname{Res}_{w=z} S\left((a, w),\left(a_{1}, z_{1}\right), \ldots,\left(a_{n}, z_{n}\right), \tau\right)\left(w-z_{1}\right)^{k} \\
=S\left(\left(a_{(k)}a_{1}, z_{1}\right), \ldots,\left(a_{n}, z_{n}\right), \tau\right);
\end{array}
$$
$\quad$(6) If $a_{i}(i=1, \ldots, n)$ are highest weight vectors for the Virasoro algebra with degree $\operatorname{deg} a_{i},$ write
$$
S=S\left(\left(\omega, w_{1}\right), \ldots,\left(\omega, w_{m}\right),\left(a_{1}, z_{1}\right), \ldots,\left(a_{n}, z_{n}\right), \tau\right),
$$
then
\begin{align*}
&S((\omega,w),(\omega,w_1),\ldots,(\omega,w_m),(a_1,z_1),\ldots,(a_n,z_n),\tau)\\
=~&2\pi i\frac{d}{d\tau}S+\sum_{k=1}^n(\wp_1(w-z_k,\tau)-(w-z_k)\frac{\pi^2}{3}E_2(\tau))\frac{d}{dz_k}S\\
&+\sum_{k=1}^m(\wp_1(w-w_k,\tau)-(w-w_k)\frac{\pi^2}{3}E_2(\tau))\frac{d}{dw_k}S\\
&+\sum_{k=1}^n(\wp_2(w-z_k,\tau)+\frac{\pi^2}{3}E_2(\tau))S\\
&+\sum_{k=1}^m(\wp_2(w-w_k,\tau)+\frac{\pi^2}{3}E_2(\tau))2S\\
&+\frac{1}{2}\sum_{k=1}^m\wp_4(w-w_k,\tau)S((\omega,w_1),\ldots,\widehat{(\omega,w_k)},\ldots,(a_n,z_n),\tau).
\end{align*}
Here $\left(\widehat{\omega, w_{k}}\right)$ means the term is omitted, and
\begin{align*}
&\wp_1(z;\tau)=\frac{1}{z}+\sum_{(m,n)\in(\Z\oplus\Z)\setminus \{(0,0)\}}\left(\frac{1}{z-m\tau-n}+\frac{1}{m\tau+n}+\frac{z}{(m\tau+n)^2}\right),\\
&\wp_{k+1}(z;\tau)=-\frac{1}{k}\frac{d}{dz}\wp_k(z;\tau),\quad\text{for }~k\geq1.
\end{align*}
\end{defn}

The linear space of $S$ satisfying the genus one property is called the \textit{ conformal block on the torus}.  We denote the space of conformal blocks associated to the conformal vertex algebra $(V, |0\rangle, T, Y(, z), \omega)$ by ${\mathcal{H}}_V$.

Here, we give an example of the explicit construction of $S$ as follows.
\subsubsection{Normal ordered Feynman graph}\label{normalorderedFG}
Introduce the free boson $\phi(z)$ with OPE and mode expansion
$$\phi(z)\phi(w)\sim\log(z-w),\quad \phi(z)=\sum_{n\neq0}-\frac{\phi_nz^{-n}}{n}+\phi_0+p,$$
where $p$ is the momentum creation operator as a conjugate of $\phi_0$.

Now we consider a normal ordered Feynman graph in the chiral boson theory on $E_\tau$. Let
$$P(z_1,z_2;\tau)=\wp(z_1-z_2;\tau)+\frac{\pi^2}{3}E_2(\tau)$$
be the propagator. Here $\wp(z;\tau)$ is the Weierstrass $\wp$-function and $E_2$ is the second Eisenstein series.

We also write $P(z_1,z_2;\tau)$ as $P(z_1-z_2;\tau)$ in the following. For convenience, we decompose the propagator $P$ into a singular part and a smooth part
$$P(z_1,z_2)=R(z_1,z_2)+Q(z_1,z_2),\quad \text{for }~ z_1\neq z_2,$$
where
\begin{enumerate}
\setlength{\itemsep}{1pt}
  \item $R(z_1,z_2)=1/(z_1-z_2)^2$;
  \item $Q(z_1,z_2)=\sum_{m\geq0}Q_m(z_1-z_2)^m$ is a power series of $(z_1-z_2)$, more explicitly,
  \begin{align*}
  &Q_0=\frac{\pi^2}{3}E_2(\tau),\qquad Q_{2m+1}=0,\quad m>0,\\
  &Q_{2m}=(2m+1)G_{2m+2}(\tau),\quad m>0,
  \end{align*}
  where $G_{2m+2}(\tau)$ is the Eisenstein series of weight $2m+2$.
\end{enumerate}

Let $\Gamma$ be an oriented graph with possible self-loops (it is not necessarily connected). Let $E(\Gamma)$ be its set of edges, and $V(\Gamma)$ be its set of vertices with $|V(\Gamma)|=n$. We can order the vertices by $\{1,2,\ldots, n\}$.
\begin{itemize}
  \item For the $i$-th vertex with valence $n_i$, we put the normal ordered fields
  $$:\L_i(z_i):=\sum_{k_1,\ldots,k_{n_i}>0}c_{k_1,\ldots,k_{n_i}}:\p^{k_1}\phi(z_i)\p^{k_2}\phi(z_i)\cdots\p^{k_{n_i}}\phi(z_i):,$$
  with only finite $c_{k_1,\ldots,k_{n_i}}$'s are non-vanishing.
  \item For each edge connecting $:\L_i(z_i):$ and $:\L_j(z_j):$, $z_i\neq z_j$, we put $$\sum_{k,l\geq0}\partial_{z_i}^k\partial_{z_j}^lP(z_i,z_j)\frac{\p}{\p(\p^{k+1}\phi(z_i))}\frac{\p}{\p(\p^{l+1}\phi(z_j))}.$$
  \item For each self-loop at the $i$-th vertex, we put
      $$\sum_{k,l\geq0}\partial_{z_i}^k\partial_{z_j}^lQ(z_i,z_j)\bigg|_{z_j=z_i}\frac{\p}{\p(\p^{k+1}\phi(z_i))}\frac{\p}{\p(\p^{l+1}\phi(z_i))}.$$
\end{itemize}

For each graph $\Gamma$, we can define a meromorphic function $\Phi_{\Gamma}(z_1,\ldots,z_n)$ on the configuration space as follows.
\begin{enumerate}
  \item If $\Gamma$ has tails, it is defined as zero.
  \item If $\Gamma$ has no tails, $\Phi_{\Gamma}(z_1,\ldots,z_n)$ is given by the Feynman rule above.
\end{enumerate}

By definition, $\Phi_{\Gamma}(z_1,\ldots,z_n)$ only has poles at the diagonal of the configuration space.
\begin{rmk}
    Similar construction has been discussed in \cite{Tuite2012, Mason2003, Takhtajan2005}. This is a mathematical reformulation of the point-splitting
 method used by physicists. See also \cite{Gui2021, Gui2023} for the case of free $\beta\gamma-bc$ systems.
\end{rmk}

\begin{defn}\label{defnnormalorderedFG} The normal ordered Feynman graph function for arbitrary fields $\L_1(z_1)$,\ldots, $\L_n(z_n)$ is defined to be
$$\langle :\L_1(z_1):,:\L_2(z_2):,\cdots,:\L_n(z_n):;\tau\rangle=\sum_{\Gamma}\frac{\Phi_{\Gamma}(z_1,\ldots,z_n)}{|\Aut(\Gamma)|},$$
where the summation is over all possible graphs as above, and $\Aut(\Gamma)$ is the automorphism group of $\Gamma$.
\end{defn}

\begin{eg} Let us compute $\langle :\p^{i_1}\phi(z_1)\p^{j_1}\phi(z_1):, :\p^{i_2}\phi(z_2)\p^{j_2}\phi(z_2):;\tau\rangle$. The contributing graphs are given by

\begin{figure}[H]
	\centering
	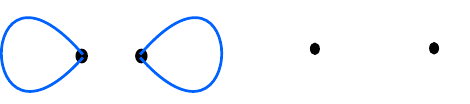
	\caption[]{Example 3.4}
\end{figure}
\vskip -0.5cm
\begin{align*}
 &\langle :\p^{i_1}\phi(z_1)\p^{j_1}\phi(z_1):, :\p^{i_2}\phi(z_2)\p^{j_2}\phi(z_2):;\tau\rangle\\
=~&(-1)^{j_1+j_2-2}Q_{i_1+j_1-2}Q_{i_2+j_2-2}+\big(\p_{z_1}^{i_1-1}\p_{z_2}^{i_2-1}P\big)\big(\p_{z_1}^{j_1-1}\p_{z_2}^{j_2-1}P\big)+\big(\p_{z_1}^{i_1-1}\p_{z_2}^{j_2-1}P)(\p_{z_1}^{j_1-1}\p_{z_2}^{i_2-1}P\big).
\end{align*}
\end{eg}

For the vertex algebra $V$ of chiral bosons
$$V=\Span\left\{\phi_{-i_1}^{k_1}\phi_{-i_2}^{k_2}\cdots\phi_{-i_n}^{k_n}|0\rangle~\big| i_1>i_2\cdots>i_n>0, k_1,\ldots,k_n\geq0, n\geq0\right\},$$
we define
\begin{equation*}
\begin{aligned}
S_n:~V^{\otimes n}~&\longrightarrow~\mathcal{F}_n\\
a_1\otimes\cdots\otimes a_n &\longmapsto S((a_1,z_1),\ldots,(a_n,z_n),\tau)
\end{aligned}
\end{equation*}
by the rescaled normal ordered Feynman graph function
\begin{equation}
S((a_1,z_1),\ldots,(a_n,z_n),\tau)~=~\eta(\tau)^{-1}\langle :\L_1(z_1):,:\L_2(z_2):,\cdots,:\L_n(z_n):;\tau\rangle,
\end{equation}
where $\L_i(z_i)=Y(a_i,z_i)$ and $\eta(\tau)$ is Dedekind eta function
$$\eta(\tau)=q_{24}\prod_{n=1}^{\infty}(1-q^n),\quad q_{24}=e^{\frac{2\pi i\tau}{24}},\quad q=e^{2\pi i\tau}.$$

\begin{prop}\label{normalcorrelation} $S=\{S_n\}_{n\geq1}$ defines an element in the conformal block on the torus.
\end{prop}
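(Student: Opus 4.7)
The plan is to verify the six axioms of the genus-one property individually for the normal ordered Feynman graph function. First, I would note that meromorphicity of $S_n$ with poles only on the big diagonal is built into the construction: the only singularities in a graph $\Phi_\Gamma$ come from the propagator $P(z_i,z_j)$, which is holomorphic off the diagonal. Ellipticity in each $z_i$ follows because $P(z_1,z_2;\tau)=\wp(z_1-z_2;\tau)+\tfrac{\pi^2}{3}E_2(\tau)$ is elliptic as a function of $z_1-z_2$, and self-loop insertions $Q(z_i,z_i)$ depend only on $\tau$; the $\eta(\tau)^{-1}$ prefactor is $z$-independent, so $S_n\in\mathcal{F}_n$.

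Axioms (1)--(3) are essentially tautological from the Feynman rule: linearity is clear, permutation symmetry follows because the graph sum is symmetric under relabeling vertices (and the unsigned nature of bosonic contractions), and inserting $|0\rangle$ at $z$ corresponds to an isolated vertex carrying the empty field $1$, which contributes nothing but the factor $1$. Axiom (4) follows by noting that $Y(L_{-1}a_1,z_1)=\partial_{z_1}Y(a_1,z_1)$, and the chain rule in the Feynman rule commutes $\partial_{z_1}$ through each propagator attached to the first vertex. Axiom (5) reduces to an OPE calculation: when $w\to z_1$, the singular part of $S((a,w),(a_1,z_1),\ldots)$ comes only from edges joining the $w$- and $z_1$-vertices, and these edges carry $R(w,z_1)=(w-z_1)^{-2}$; extracting the coefficient of $(w-z_1)^{-k-1}$ matches the $n$-th product $a_{(k)}a_1$ by the Wick formula for the chiral boson, together with the fact that $Q$ contributions are regular.

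The main obstacle, and the bulk of the work, is axiom (6), the conformal Ward identity under insertion of $\omega=\tfrac{1}{2}\!:\!\partial\phi\partial\phi\!:$. I would carry this out by separating the contributions in the Feynman expansion of $S((\omega,w),(\omega,w_1),\ldots,(a_n,z_n),\tau)$ according to how the two half-edges at the vertex labeled by $\omega$ at $w$ are contracted: (a) both contract to distinct external vertices, (b) both contract to the same external vertex (producing a self-loop-like configuration at $w$), or (c) form a self-loop at $w$ itself (giving the Eisenstein-type vacuum term). Case (a) reproduces the $\wp_1$-derivative terms via the identity $\partial_{z_k}P(w-z_k)=-\partial_w P(w-z_k)$ combined with $\wp=\wp_2=P-\tfrac{\pi^2}{3}E_2$, which is precisely how the characteristic combinations $\wp_1(w-z_k)-(w-z_k)\tfrac{\pi^2}{3}E_2$ and $\wp_2(w-z_k)+\tfrac{\pi^2}{3}E_2$ arise. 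Case (b) yields the $\wp_4$ piece at inserted $\omega$-vertices through the relation $\wp_4=\tfrac16\partial^2\wp$ from the recursion $\wp_{k+1}=-\tfrac1k\partial\wp_k$. Case (c), a self-loop at $w$, produces the $2\pi i\,\tfrac{d}{d\tau}$ term together with the needed modular correction by invoking the heat-equation-type relation $\partial_\tau\log\eta(\tau)=-\tfrac{\pi i}{12}E_2(\tau)$ and $\partial_\tau \wp(z;\tau)$ expressible through $\wp'$ and $E_2$; the $\eta^{-1}$ normalization exactly matches the constant shift so that the $E_2$-modular anomaly is absorbed correctly.

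The hard part is organizing the bookkeeping for axiom (6): each of the three contraction patterns contributes terms intertwined with the Eisenstein corrections, and one must verify that the sum produces exactly the prescribed right-hand side with no residual terms. I would handle this by reducing to a finite number of universal identities among $\wp_1,\wp_2,\wp_4,E_2$ and $\partial_\tau\wp$, proving them once, and then applying them uniformly across all graphs in the expansion.
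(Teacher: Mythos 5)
Your overall strategy coincides with the paper's: ellipticity and meromorphicity from the properties of $P$ and $Q$, axioms (1)--(4) read off from the Feynman rule, axiom (5) via the OPE of two vertices, and axiom (6) by classifying graphs according to how the two half-edges of the new $\omega$-vertex at $w$ contract --- your cases (a), (b), (c) are exactly the paper's Cases 2, 3, 1, and your ``finite number of universal identities'' are the product formulas of Proposition \ref{prodformula}, derived there from the addition theorem for $\wp$ and the heat equation. Two steps of your sketch, however, would not go through as written. First, for axiom (5) you match only the singular part and dismiss the smooth part $Q$ of the propagator as ``regular''. But the identity actually needed (the paper's Lemma \ref{graphOPE}) is the full OPE over all $n\in\mathbb Z$, and even for $k\geq 0$ the fused state $a_{(k)}a_1$ is a normal-ordered expression whose own Feynman graphs carry self-loops weighted by the Taylor coefficients $Q_m$ of $Q$ at the diagonal. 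Verifying that these are reproduced requires Taylor-expanding $Q(w,z_1)$, the self-loops $Q(w,w)$, and the edges $P(w,z_l)$ about $w=z_1$; this is precisely the three-case computation in the proof of Lemma \ref{graphOPE} and is not a formal consequence of ``$R$ carries all the poles''.

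Second, in axiom (6) you attribute the $2\pi i\frac{d}{d\tau}S$ term to the self-loop at $w$ (your case (c)). That graph contributes only $Q(w,w)\cdot S=\frac{\pi^2}{3}E_2(\tau)\,S$, a multiple of $S$ by a function of $\tau$ alone, and cannot produce $\tau$-derivatives of the propagators. In the paper, $2\pi i\frac{d}{d\tau}S$ is assembled by the Leibniz rule from the summands $2\pi i\frac{d}{d\tau}P(u-v)$ and $2\pi i\frac{d}{d\tau}\bigl(\frac{\pi^2}{3}E_2\bigr)$ occurring in the product formulas applied in Cases 2 and 3, one for each edge or self-loop of the original graph, while the $\tau$-derivative of the prefactor $\eta(\tau)^{-1}$ --- via $\frac{d}{d\tau}\log\eta(\tau)=+\frac{\pi i}{12}E_2(\tau)$, note your sign is off --- combines with the case-(c) contribution and the constant shifts $\wp_2+\frac{\pi^2}{3}E_2$ on the right-hand side to close the $E_2$-bookkeeping. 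Attempting to extract $2\pi i\partial_\tau S$ from case (c) alone would leave the $\partial_\tau P$ pieces of cases (a) and (b) unaccounted for. (Relatedly, the combinations $\wp_1(w-z_k)-(w-z_k)\frac{\pi^2}{3}E_2=\zeta(w-z_k)$ arise from the $\zeta(w-u)-\zeta(w-v)$ term of the product formula, not from commuting $\partial_{z_k}$ past the propagator.) With these two repairs your outline becomes the paper's proof.
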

\begin{proof} See in Appendix \ref{proofnormal}.
\end{proof}

\subsection{Regularized integrals}\label{regularizedintegrals}
In this subsection, we will recall the necessary definition and results of regularized integrals introduced in \cite{SiJie2020} and apply them to the conformal block to obtain a mathematical definition of the genus one partition functions in \cite{Dijk1996master}.

Let $\Sigma$ be a smooth Riemann surface, and denote
\begin{itemize}
  \item $\Delta:=\cup_{i\neq j}\Delta_{ij}$, $\Delta_{ij}=\left\{(z_1,z_2,\ldots,z_n)\in\Sigma^n|z_i=z_j\right\}$.
  \item $A^{p,q}_{\Sigma^n}(\star\Delta)$: sheaf of smooth $(p,q)$-forms with arbitrary holomorphic poles along $\Delta$.
  \item $A^{p,q}_{\Sigma^n}(\log\Delta)$: sheaf of smooth $(p,q)$-forms with possible holomorphic log-poles along $\Delta$.
\end{itemize}

\begin{rmk}
    We follow the notation of \cite{SiJie2020}. A local section of $A^{p,q}_{\Sigma^n}(\log\Delta)$ is a linear combination of forms of type $\omega=\nu\cdot \frac{df_1}{f_1}\wedge\cdots \wedge \frac{df_l}{f_l}$ where $\nu$ is a smooth form and these $f_i$'s are local equations of some irreducible components of the diagonal divisor ($d$ is the holomorphic differential). For the detailed study of the logarithmic de Rham complex, see \cite[4.2.13]{Beilinson2004}.
\end{rmk}
For any $\omega\in A^{n,\bullet}_{\Sigma^n}(\star\Delta)$, there exists a decomposition (by reduction of poles)
$$\omega=\alpha+\partial\beta$$
where $\alpha\in A^{n,\bullet}_{\Sigma^n}(\log \Delta)$, $\beta\in A^{n-1,\bullet}_{\Sigma^n}(\star\Delta)$.
Then they define the regularized integral
\begin{equation}\label{regint}
\dashint_{\Sigma^n}\omega:=\int_{\Sigma^n}\alpha.
\end{equation}

The notion of regularized integrals satisfies nice properties, for example, see \cite{SiJie2020},
\begin{itemize}
  \item it works for Riemann surfaces with boundary;
  \item Stokes' theorem, Fubini theorem, Riemann-Hodge bilinear relation;
  \item There is a relative version.
\end{itemize}

Here we give the explicit description of Stokes' formula and Fubini Theorem for the regularized integrals, which we will mainly use in the following.

\begin{thm}[\cite{SiJie2020}, Theorem 2.13]\label{Stokes}
 Let $\Sigma$ be a compact Riemann surface possibly with boundary $\partial \Sigma$. Let $\alpha \in A_{\Sigma}^{1}(\star D)$, where $D\subset\Sigma$ is a subset of finite points which does not meet $\partial\Sigma$. Then we have the following version of Stokes' formula for the regularized integral
$$
\dashint_{\Sigma} d \alpha=-\sum_{p\in D}\operatorname{Res}_{z=p}(\alpha)+\int_{\partial \Sigma} \alpha.
$$
\end{thm}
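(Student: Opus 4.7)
The plan is to prove the regularized Stokes' formula by excising small disks around the punctures, applying classical Stokes on the complement, and passing to the limit. At each $p \in D$ I would fix a local coordinate $w = z-p$ and use the Laurent expansion to write
$$\alpha^{(1,0)} = \frac{c_{-1}(p)}{w}\,dw + \partial F_p + g_p(w)\,dw,$$
where $F_p = -\sum_{j\geq 2} c_{-j}(p)/\bigl((j-1)w^{j-1}\bigr)$ gathers the $\partial$-exact higher-order pole part, $g_p$ is holomorphic, and $\alpha^{(0,1)}$ is smooth across $p$. This isolates the residue in the log-pole term and casts the remaining singular contribution as $\partial$-exact, matching the decomposition $\omega = \omega_0 + \partial\beta$ that defines $\dashint$.

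Next, let $\Sigma_\epsilon := \Sigma \setminus \bigsqcup_{p \in D} B_\epsilon(p)$. On $\Sigma_\epsilon$ the form $\alpha$ is smooth, and the classical Stokes' theorem gives
$$\int_{\Sigma_\epsilon} d\alpha \;=\; \int_{\partial \Sigma} \alpha \;-\; \sum_{p\in D}\int_{\partial B_\epsilon(p)}\alpha,$$
with the small circles oriented counterclockwise. Using the local expansion above, the $\partial F_p$ piece contributes zero by Cauchy's theorem on the annulus (exact), the holomorphic term $g_p\,dw$ contributes zero (Cauchy on the disk), and $\alpha^{(0,1)}$ contributes $O(\epsilon)$; only the simple-pole term survives, giving $\int_{\partial B_\epsilon(p)}\alpha \to \Res_{z=p}(\alpha)$ in the paper's normalization (with the $2\pi\sqrt{-1}$ factor absorbed into $\Res$).

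The main obstacle is to identify $\lim_{\epsilon\to 0}\int_{\Sigma_\epsilon} d\alpha$ with $\dashint_\Sigma d\alpha$. I would construct a global decomposition $d\alpha = \gamma_0 + \partial\delta$ with $\gamma_0 \in A^{1,1}_\Sigma(\log D)$ and $\delta \in A^{0,1}_\Sigma(\star D)$: locally $\delta$ is built from $\bar\partial F_p$ via $d(\partial F_p) = -\partial\bar\partial F_p$, glued globally by a partition of unity, while $\gamma_0$ retains the log-pole and smooth data. Since $\gamma_0$ is locally $L^1$, dominated convergence yields $\int_{\Sigma_\epsilon}\gamma_0 \to \int_\Sigma \gamma_0 = \dashint_\Sigma d\alpha$. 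For the remaining $\int_{\Sigma_\epsilon} \partial\delta$, a second Stokes application on the annular neighborhoods produces a boundary term on each $\partial B_\epsilon(p)$ of order $O(\epsilon)$, because the holomorphic pole orders of $F_p$ are absorbed when pairing with the shrinking circle. Combining these limits with the residue computation above gives the claimed identity; independence from the particular decomposition follows because any two choices differ by a globally smooth $\partial$-closed form whose regularized integral is unambiguous and matches the limit.
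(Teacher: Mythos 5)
The paper does not prove this statement: it is imported from \cite{SiJie2020} (Theorem 2.13), and the proof there is essentially the excision argument you propose --- classical Stokes on $\Sigma_\epsilon=\Sigma\setminus\bigsqcup_{p\in D}B_\epsilon(p)$, identification of $\lim_{\epsilon\to0}\int_{\Sigma_\epsilon}d\alpha$ with $\dashint_\Sigma d\alpha$ via the reduction-of-poles decomposition, and evaluation of the shrinking circle integrals. So your overall architecture is the right one.

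The genuine gap is in your local model at each puncture. An element of $A^1_\Sigma(\star D)$ is a \emph{smooth} form with holomorphic poles, i.e.\ locally $\alpha=\frac{h(w,\bar w)}{w^N}\,dw+\frac{k(w,\bar w)}{w^N}\,d\bar w$ with $h,k$ smooth; it need not be meromorphic, and its $(0,1)$-part need not be regular at $p$. This generality is exactly what the present paper uses: in Lemma \ref{RegIntegralF} the theorem is applied to forms built from $\operatorname{Im}z_m$, which depend on $\bar z_m$. Consequently the expansion $\alpha^{(1,0)}=\frac{c_{-1}(p)}{w}\,dw+\partial F_p+g_p(w)\,dw$ with constant $c_{-j}(p)$ and holomorphic $g_p$ does not exist in general (already $\frac{\bar w}{w^2}\,dw$ admits no such decomposition), and ``Cauchy's theorem on the annulus'' is unavailable for $\partial F_p$ once $F_p$ depends on $\bar w$, since $\partial F_p$ is then not closed. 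The repair is to run the reduction of poles in the smooth category, $\frac{h}{w^N}\,dw=\partial\bigl(\tfrac{-h}{(N-1)w^{N-1}}\bigr)+\tfrac{\partial_w h}{(N-1)w^{N-1}}\,dw$, iterated down to a log-pole term $\sigma(w,\bar w)\frac{dw}{w}$ plus a $\partial$-exact one, and to evaluate the circle integrals by Taylor-expanding the smooth numerators in both $w$ and $\bar w$: on $|w|=\epsilon$ the monomial $w^j\bar w^k w^{-N}\,dw$ integrates to zero unless $j=N+k-1$, in which case it contributes $2\pi i\,\epsilon^{2k}$, so only the $k=0$ term (the holomorphic residue coefficient) survives as $\epsilon\to0$, while every monomial of the $(0,1)$-part contributes either $0$ or $O(\epsilon^2)$. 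With this substitute for your Laurent expansion, the remaining steps of your argument (local integrability of the log-pole part, hence dominated convergence; vanishing of the circle contributions of the $\partial$-exact part) go through and give the stated formula; the independence of the chosen decomposition is part of the well-definedness of $\dashint$ established in \cite{SiJie2020} and need not be reproved here.
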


\begin{thm}[Fubini Theorem: \cite{SiJie2020} Theorem 2.37, Corollary 2.39]\label{Fubini} Let $\sigma$ be any permutation of $\{1,2,\ldots,n\}$. Then the iterated regularized
integral
$$\dashint_{\Sigma_{\sigma(1)}}\dashint_{\Sigma_{\sigma(2)}}\cdots\dashint_{\Sigma_{\sigma(n)}}\omega,\quad\omega\in A_{\Sigma^n}^{2n}(\star\Delta)$$
does not depend on the choice of $\sigma$.
\end{thm}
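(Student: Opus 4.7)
The plan is to reduce the statement to the two-variable case. Since any permutation of $\{1,\ldots,n\}$ decomposes into a product of adjacent transpositions, it suffices to show that two consecutive regularized integrals commute, i.e.,
$$\dashint_{\Sigma_i}\dashint_{\Sigma_{i+1}}\omega=\dashint_{\Sigma_{i+1}}\dashint_{\Sigma_i}\omega,$$
treating the remaining variables as parameters. This effectively reduces the problem to the case $n=2$ for a form whose diagonal poles may depend on parameters.

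For $n=2$, I would apply the pole-reduction procedure to decompose $\omega\in A^{2,2}_{\Sigma^2}(\star\Delta)$ as $\omega=\alpha+\partial_{z_1}\beta_1+\partial_{z_2}\beta_2$, where $\alpha\in A^{2,2}_{\Sigma^2}(\log\Delta)$ has only first-order (logarithmic) poles along the diagonal. Such a decomposition exists by iterated use of identities like $\tfrac{1}{(z_1-z_2)^k}=-\tfrac{1}{k-1}\partial_{z_1}\tfrac{1}{(z_1-z_2)^{k-1}}$ for $k\geq 2$. For the log-pole part $\alpha$, the regularized integral coincides with the ordinary Lebesgue integral (log-poles are absolutely integrable in two real dimensions), and classical Fubini applies directly. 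For each exact piece $\partial_{z_i}\beta_i$, I would invoke the regularized Stokes' theorem (Theorem \ref{Stokes}) in the $z_i$-variable to convert its integral into a sum of residue contributions along $\Delta$. These residues are smooth $(1,1)$-forms in the remaining variable, and one checks by explicit computation that the residue contributions obtained by integrating in the order $z_1,z_2$ agree with those obtained in the order $z_2,z_1$.

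The main obstacle is well-definedness of the scheme: the decomposition $\omega=\alpha+\partial\beta$ is non-canonical, and one must verify that the iterated regularized integral is independent of the choice. This amounts to checking that any two decompositions differ by a form whose iterated integral vanishes, which follows from the regularized Stokes' theorem together with the residue-free character of the difference. A further subtlety arising for $n\geq 3$ is that decomposing $\omega$ with respect to $z_i$ may introduce new poles in the other variables along deeper diagonal strata $\Delta_{ij}\cap\Delta_{kl}$; accommodating this likely requires either an induction on the codimension of the strata or a uniform log-resolution of the diagonal arrangement on $\Sigma^n$, yielding a single global log-pole representative of $\omega$ for which all orderings of integration are manifestly equivalent via classical Fubini.
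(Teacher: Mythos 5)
First, note that the paper does not prove this statement at all: Theorem \ref{Fubini} is imported verbatim from Li--Zhou \cite{SiJie2020} (their Theorem 2.37 and Corollary 2.39), so there is no internal proof to compare against; your sketch has to be judged against the cited argument, whose overall strategy (pole reduction, regularized Stokes, residue bookkeeping) it does resemble.

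That said, there is a genuine gap: the entire content of the theorem is concentrated in the step you dispose of with ``one checks by explicit computation that the residue contributions \ldots agree,'' and that check is not routine. Concretely, after writing $\omega=\alpha+\partial_{z_1}\beta_1+\partial_{z_2}\beta_2$, the comparison of $\dashint_{\Sigma_1}\dashint_{\Sigma_2}$ with $\dashint_{\Sigma_2}\dashint_{\Sigma_1}$ on the exact pieces requires (i) that $\partial_{z_1}$ commutes with $\dashint_{\Sigma_2}$ (itself a lemma of \cite{SiJie2020}, quoted later in this paper), and (ii) a cancellation between terms of the form $\dashint_{\Sigma_2}\operatorname{Res}_{z_1=z_2}\beta_1$ and $\operatorname{Res}_{z_2=z_1}$ of the partially integrated form, which uses the antisymmetry of the residue across the diagonal; none of this is exhibited. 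Moreover, your assertion that ``these residues are smooth $(1,1)$-forms in the remaining variable'' fails in exactly the setting your reduction produces: once $z_3,\dots,z_n$ are frozen as parameters, $\omega$ has poles not only along $z_1=z_2$ but also at $z_1=z_k$ and $z_2=z_k$, so $\operatorname{Res}_{z_1=z_2}\beta_1$ is a form in $z_2$ with poles at the points $z_2=z_k$, and the outer integral must again be a regularized one --- the two-variable case you reduce to is not the clean case you then analyze. Finally, the reduction to adjacent transpositions presupposes that the partially integrated form $\dashint_{\Sigma_{\sigma(n)}}\cdots$ stays in a class on which the regularized integral is defined and on which the two-variable commutation applies (in the paper's application this is the content of $\dashint_{E_\tau}:\mathcal{R}_n^{E_\tau}\to\mathcal{R}_{n-1}^{E_\tau}$); you flag this only at the very end, for $n\geq 3$, as something that ``likely requires'' an induction or a log resolution, which is an acknowledgment of the gap rather than a closure of it.
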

\begin{rmk}
    The regularized integral can be viewed as an explicit realization of the trace map on the unit chiral chain complex \cite[4.3.3]{Beilinson2004}. This connection is established in \cite{Gui2021}.
\end{rmk}

In the rest of this section, we apply the theory of regularized integrals to the elliptic curve $E_{\tau}$
$$
E_{\tau}=\mathbb{C}/(\mathbb{Z}+\mathbb{Z}\tau),\quad \tau\in\mathbf{H}.
$$
Recall that $z$ is a linear holomorphic coordinate on the universal cover $\mathbb{C}$ in terms of which one has
$$
\int_{E_\tau}\frac{d^2z}{\operatorname{Im}\tau}=1,\quad d^2z:=\frac{\sqrt{-1}}{2}dz\wedge d\bar{z}.
$$
\begin{defn}\label{Almost-Meromorphic}
We say a function $\Psi$ on $\mathbb{C}^{n} \times \mathbf{H}$ is \emph{almost-meromorphic} if $\Psi$ can be written as a finite
sum
$$
\Psi\left(z_{1}, \cdots, z_{n} ; \tau\right)=\sum_{k_{1}, \cdots, k_{n}, m\geq0} \Psi_{k_{1}, \cdots, k_{n} ; m}\left(z_{1}, \cdots, z_{n} ; \tau\right)\left(\frac{\operatorname{Im} z_{1}}{\operatorname{Im} \tau}\right)^{k_{1}} \cdots\left(\frac{\operatorname{Im} z_{n}}{\operatorname{Im} \tau}\right)^{k_{n}}\left(\frac{1}{\operatorname{Im} \tau}\right)^{m}
$$
where each $\Psi_{k_{1}, \cdots, k_{n} ; m}\left(z_{1}, \cdots, z ; \tau\right)$ is a meromorphic function on $\mathbb{C}^{n} \times \mathbf{H}$.
\end{defn}

\begin{defn}\label{Elliptic-HolomorphicAway}
 Let $\mathcal{R}_{n}^{E_{\tau}}$ denote the space of functions $\Psi$ on $\mathbb{C}^{n} \times \mathbf{H}$ such that
\begin{itemize}
  \item $\Psi$ \text { is elliptic and almost-meromorphic. }
  \item { Each component } $\Psi_{k_{1}, \cdots, k_{n} ; m}$ \text { as in Definition \ref{Almost-Meromorphic}} \text { is holomorphic away from diagonals. }
\end{itemize}
\end{defn}

\begin{prop}[\cite{SiJie2020}, Proposition 3.15]
 The regularized integral defines a map
$$
\dashint_{E_{\tau}} \frac{d^{2} z_{n}}{\operatorname{Im} \tau}: \mathcal{R}_{n}^{E_{\tau}} \rightarrow \mathcal{R}_{n-1}^{E_{\tau}}.
$$
In particular, for $\Psi\in\mathcal{R}_n^{E_{\tau}}$,
\begin{equation}\label{keyeq}
\dashint_{E_{\tau}}\frac{d^{2} z_{n}}{\im\tau}\Psi(z)=\oint_Adz_n\Psi(z)+\sum_{j\neq n}\operatorname{Res}_{z_n=z_j}\frac{\im z_n}{\im\tau}\Psi(z).
\end{equation}
\end{prop}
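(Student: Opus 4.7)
The strategy is to establish the identity \eqref{keyeq} first, from which the assertion that $\dashint_{E_\tau}\frac{d^2 z_n}{\im\tau}$ maps $\mathcal{R}_n^{E_\tau}$ into $\mathcal{R}_{n-1}^{E_\tau}$ follows by inspection of the right-hand side. Using the almost-meromorphic decomposition of Definition \ref{Almost-Meromorphic} with respect to $z_n$, write $\Psi=\sum_{k\geq 0}f_k\bigl(\frac{\im z_n}{\im\tau}\bigr)^k$ where each $f_k$ is meromorphic in $z_n$, $A$-periodic, and holomorphic in $z_n$ away from the diagonal $\{z_n=z_j\}_{j\neq n}$. Ellipticity of $\Psi$ under $z_n\mapsto z_n+\tau$ couples the $f_k$'s via the triangular system $\sum_{k\geq l}\binom{k}{l}f_k(z_n+\tau)=f_l(z_n)$ for all $l\geq 0$.

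Next I would introduce the $(1,0)$-form primitive
$$\eta:=\sum_{k\geq 0}\frac{f_k}{k+1}\Bigl(\frac{\im z_n}{\im\tau}\Bigr)^{k+1}dz_n$$
and verify by direct computation that $d_{z_n}\eta=-\Psi\cdot\frac{d^2 z_n}{\im\tau}$ away from the diagonal, using $\bar\partial_{z_n}\bigl(\frac{\im z_n}{\im\tau}\bigr)=-\frac{d\bar z_n}{2i\,\im\tau}$ and $\bar\partial_{z_n}f_k=0$ off the diagonal. Cutting $E_\tau$ along $A$- and $B$-cycles to a fundamental parallelogram $F$ and applying the regularized Stokes theorem (Theorem \ref{Stokes}) to $-\eta$ on $F$ with designated pole set $\{z_n=z_j\}$ yields
$$\dashint_{E_\tau}\Psi\,\frac{d^2 z_n}{\im\tau}=\sum_{j\neq n}\Res_{z_n=z_j}(\eta)-\int_{\partial F}\eta.$$

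For the boundary analysis, the two $B$-cycle integrals cancel by the $A$-periodicity of each $f_k$, the bottom $A$-cycle contributes zero since $\eta|_{\im z_n=0}=0$, and the top $A$-cycle (at $\im z_n=\im\tau$), after substituting the ellipticity relations among the $f_k$'s and performing a combinatorial resummation with the factors $\frac{1}{k+1}$, reassembles into $-\oint_A\Psi\,dz_n$. For the residues, Taylor-expanding $\bigl(\frac{\im z_n}{\im\tau}\bigr)^{k+1}$ around each $z_n=z_j$ and cancelling the $\frac{1}{k+1}$ against the $k+1$ produced by the derivative, the sum collapses to $\sum_{j\neq n}\Res_{z_n=z_j}\!\bigl(\frac{\im z_n}{\im\tau}\Psi\bigr)$, yielding \eqref{keyeq}. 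Granted the identity, the map property is immediate: the $A$-cycle integral is almost-meromorphic in $z_1,\ldots,z_{n-1}$ with poles only along the remaining diagonals, the residues introduce factors of $\im z_j/\im\tau$ consistent with Definition \ref{Almost-Meromorphic}, and ellipticity in the remaining variables descends from that of $\Psi$ since translations in $z_i$ ($i\neq n$) commute with integration over $z_n$ by Fubini.

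The main technical obstacle is the combinatorial resummation on the top $A$-cycle, where the naive expression $\sum_{k\geq 0}\frac{f_k(z_n+\tau)}{k+1}$ must be reorganized into the clean $\Psi|_{\im z_n=0}=f_0(z_n)$ using the full tower of ellipticity constraints together with the vanishing of residue sums for elliptic functions on the torus. I expect this to be handled most transparently by induction on the highest power of $\im z_n/\im\tau$ appearing in $\Psi$, with the base case of purely meromorphic elliptic $\Psi$ handled directly via the simpler primitive $\eta_0=f_0\bigl(\frac{\im z_n}{\im\tau}\bigr)dz_n$ and the inductive step reducing $\Psi$ modulo lower-weight corrections generated by integration by parts.
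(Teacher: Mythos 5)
First, a point of orientation: the paper does not prove this proposition --- it is imported verbatim from Li--Zhou \cite{SiJie2020}. The closest in-house argument is the proof of Lemma \ref{RegIntegralF}, which uses exactly the architecture you propose: build a $(1,0)$-primitive out of the antiderivatives $\frac{1}{k+1}\bigl(\frac{\im z_n}{\im\tau}\bigr)^{k+1}$, apply the regularized Stokes theorem on the cut fundamental domain, cancel the $B$-edges by $1$-periodicity, and read off the top-edge and residue contributions. Your setup --- the triangular quasi-periodicity system $\sum_{k\geq l}\binom{k}{l}f_k(z_n+\tau)=f_l(z_n)$, the identity $d\eta=-\Psi\,\frac{d^2z_n}{\im\tau}$, the vanishing of the bottom edge --- is all correct and consistent with that proof.

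The gap is in the two ``collapse'' claims, and it is not mere bookkeeping. After inverting the quasi-periodicity relations, the top edge contributes $\oint_A\sum_k\frac{1}{k+1}f_k(z_n+\tau)\,dz_n=\oint_A\sum_k\frac{(-1)^k}{k+1}f_k(z_n)\,dz_n$, which is not $\oint_Af_0\,dz_n$ unless $\oint_Af_k=0$ for all $k\geq1$; and $\Res_{z_n=z_j}(\eta)=\sum_k\frac{1}{k+1}\Res_{z_n=z_j}\bigl(f_k\bigl(\tfrac{\im z_n}{\im\tau}\bigr)^{k+1}dz_n\bigr)$, which differs from $\Res_{z_n=z_j}\bigl(\tfrac{\im z_n}{\im\tau}\Psi\,dz_n\bigr)=\sum_k\Res_{z_n=z_j}\bigl(f_k\bigl(\tfrac{\im z_n}{\im\tau}\bigr)^{k+1}dz_n\bigr)$ whenever some $f_k$ with $k\geq1$ has a pole. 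Test your claims on $\Psi=\zeta(z_2-z_1)+2\pi i\,\tfrac{\im(z_2-z_1)}{\im\tau}$ (the elliptic completion of the zeta function, which lies in $\mathcal{R}_2^{E_\tau}$): here $f_1=2\pi i$ is constant, the residue discrepancy vanishes, but the top-edge discrepancy is $-\tfrac12\oint_Af_1=-\pi i\neq0$, so the two collapses cannot both hold and the boundary term genuinely fails to reassemble into $\oint_A\Psi$ under the naive reading of $\oint_A$ as restriction to $\{\im z_n=0\}$. The actual content of the proposition is therefore the interplay between the $A$-periods and the residues of the non-elliptic lower components $f_k$, $k\geq 1$, together with the precise conventions Li--Zhou use for $\oint_A$ and $\Res$ on almost-meromorphic functions; this is exactly what the hypothesis of Lemma \ref{RegIntegralF} (``meromorphic part plus total $\partial_{z_m}$-derivatives'') is designed to trivialize, since for such inputs the higher components are exact, their residues vanish, and their $A$-periods telescope. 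You need either to carry out this cross-cancellation explicitly (pinning down the definitions of $\oint_A$ and $\Res$ for elements of $\mathcal{R}_n^{E_\tau}$), or to reduce a general $\Psi$ to the exact form of Lemma \ref{RegIntegralF}, which meets the same obstructions. As written, the key step asserts the conclusion rather than proving it.
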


\subsection{The two pure types of genus one partition functions}\label{1st2ndgenus1pf}

Now we will define two pure types of genus one correlators associated with $S\in\mathcal{H}_V$.

\begin{defn} Let $m\in\mathbb{N}$ and $S\in\mathcal{H}_V$. We define the first type of $m$-point correlators $\langle-\rangle_{E_{\tau},m}$ as follows:
\begin{equation}\label{1stkind}
\begin{aligned}
\langle-\rangle_{E_{\tau},m}:&~\mathcal{A}^{\otimes m}\longrightarrow\C\\
\langle a_1,\ldots,a_m\rangle_{E_{\tau},m}
=~&\prod_{k=1}^m\dashint_{E_{\tau}}\frac{d^2z_k}{\im\tau}S((a_1,z_1),\ldots,(a_m,z_m),\tau).
\end{aligned}
\end{equation}
By choosing a countable number of elements $\{v_i\}_{i\in\N}$ in Definition \ref{intcdt}, the first type of genus one partition function $Z_{E_{\tau}}(t)$ is defined to be
$$Z_{E_{\tau}}(t)=\sum_{m\geq0}\frac{1}{m!}\sum_{(i_1,\ldots,i_m)\in\N^m}~t^{i_1}\cdots t^{i_m}\langle v_{i_1},\ldots,v_{i_m}\rangle_{E_{\tau},m},$$
where $\{t^{i}\}$ are the coordinates of $\{v_i\}$ respectively.
\end{defn}

The correlators are well-defined by Fubini Theorem \ref{Fubini} of the regularized integral.

\vskip 0.2cm

On the other hand,  the iterated $A$-cycle integral in general depends on the order of integration. However, Dijkgraaf's integrable condition guarantees the following result.

\begin{lem}\label{Commutative} Let $\mathcal{A}$ be the subvertex algebra satisfying the Dijkgraaf's condition and let $S\in\mathcal{H}_V$. For any $a\in\mathcal{A}^{\otimes n}$, we have
$$\oint_Adz_i\oint_Adz_jS(a)=\oint_Adz_j\oint_Adz_iS(a).$$
\end{lem}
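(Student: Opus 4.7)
The obstruction to commuting the two $A$-cycle integrations is a residue contribution at the diagonal $z_i = z_j$, which Dijkgraaf's condition forces to vanish. The first step is to reduce to a two-variable problem: fix the remaining variables $z_k$ ($k\neq i,j$) at generic heights in the fundamental parallelogram and view $S(a)$ as a function of $(z_i,z_j)$, doubly periodic in each and with poles confined to the diagonals $z_i=z_j$, $z_i=z_k$, and $z_j=z_k$. Interpreting each $A$-cycle integral as the line integral along a horizontal cycle at a generic imaginary height, I would place the inner and outer cycles at distinct heights so that Fubini applies to the smooth integrand off the diagonals. The two iterated integrals then differ only by the relative order of the heights $y_i,y_j$; during the height swap only the single pole at $z_i=z_j$ crosses the $z_i$-contour (the heights $y_k$ are held fixed), and the residue theorem yields
\begin{equation*}
\Bigl[\oint_A dz_i,\oint_A dz_j\Bigr]S(a)=\pm 2\pi\sqrt{-1}\oint_A dz_j\,\operatorname{Res}_{z_i=z_j}S(a).
\end{equation*}

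The next step evaluates this residue using axiom (5) of the genus-one conformal block, which gives $\operatorname{Res}_{z_i=z_j}S(a)=S((\ldots,(a_{i\,(0)}a_j,z_j),\ldots))$ with the $(a_i,z_i)$-slot removed. By Dijkgraaf's integrable condition (Definition \ref{intcdt}), $a_{i\,(0)}a_j=Tc$ for some $c\in\mathcal{A}$, and axiom (4) converts this into the $z_j$-derivative
\begin{equation*}
S((\ldots,(Tc,z_j),\ldots))=\frac{d}{dz_j}S((\ldots,(c,z_j),\ldots)).
\end{equation*}
Since the latter function is elliptic in $z_j$, its $A$-cycle integral equals $f(1+iy_j)-f(iy_j)=0$ by periodicity, so the commutator vanishes.

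The main obstacle will be the contour-deformation bookkeeping in the first step, specifically verifying that only the $z_i=z_j$ diagonal contributes to the height-swap and that the resulting expression is independent of the auxiliary heights chosen. A cleaner algebraic alternative would be to use equation \eqref{keyeq} to rewrite each $A$-cycle integral as a regularized integral minus residue corrections, invoke Fubini for regularized integrals (Theorem \ref{Fubini}) to commute the resulting double regularized integral freely, and then cancel the symmetric terms so that only the residue contributions at $z_i=z_j$ survive the subtraction. Either route culminates in the same vanishing argument powered by axioms (4), (5) and Dijkgraaf's condition.
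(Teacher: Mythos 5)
Your proof is correct and takes essentially the same route as the paper's: the commutator of the two $A$-cycle integrals reduces to $\oint_A dz_j\,\Res_{z_i=z_j}S(a)$, which axiom (5), Dijkgraaf's condition and axiom (4) rewrite as the $A$-cycle integral of the total derivative $\partial_{z_j}S(B_{i\to j}a)$ of a $1$-periodic function, hence zero. The only differences are that the paper simply asserts the contour-swap identity (in a normalization where the $2\pi i$ is absorbed into the residue term) while you justify it by the height-deformation argument, and the ``cleaner algebraic alternative'' you sketch via \eqref{keyeq} and Fubini is not needed here.
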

\begin{proof}
We use the axiom (5) for $S$
\begin{align*}
  \oint_Adz_i\oint_Adz_jS(a)-\oint_Adz_j\oint_Adz_iS(a) &=\oint_{A}dz_{j}\operatorname{Res}_{z_i=z_j}S(a) \\
   & =\oint_{A}dz_{j}\partial_{z_j}S(B_{i\rightarrow j}a)\\
   &=0.\qedhere
\end{align*}
\end{proof}

Thus, we arrive at the following definition.

\begin{defn}Let $n\in\mathbb{N}$ and $S\in\mathcal{H}_V$. We define the second type of $n$-point correlators $\langle-\rangle_{A,n}$ as follows:
\begin{equation}\label{2ndkind}
\begin{aligned}
\langle-\rangle_{A,n}:&~\mathcal{A}^{\otimes n}\longrightarrow\C\\
\langle a_1,\ldots,a_n\rangle_{A,n}
=~&\prod_{l=1}^n\oint_A{dz_l}S((a_1,z_1),\ldots,(a_n,z_n),\tau).
\end{aligned}
\end{equation}
Similarly, choosing $\{v_i\}$ as above, the second kind of genus one partition function $Z_{A}(s)$ is defined to be
$$Z_{A}(s)=\sum_{n\geq0}\frac{1}{n!}\sum_{(j_1,\ldots,j_n)\in\N^n}s^{j_1}\cdots s^{j_n}\langle v_{j_1},\ldots,v_{j_n}\rangle_{A,n},$$
where $\{s^j\}$ are the coordinates of $\{v_j\}$ respectively.
\end{defn}

We are interested in the relation between the first type and the second type correlators. Inspired by Dijkgraaf's work, we need to define a more general partition function $Z(s,t)$ including arbitrary iterated $A$-cycle integrals and iterated regular integrals. However, as we explained above, $\oint_A$ does not preserve the ellipticity in general, thus the mixed double integral $\dashint_{E_{\tau}}\oint_A$ does not make sense. In order to obtain a well-defined genus one partition function $Z(s,t)$ of mixed type, we need to do some modification of $A$-cycle integral with the help of the following equation (i.e. \eqref{keyeq} with $\Psi(z)$ being $S(a)$):
\begin{equation}\label{keyeq2}
\dashint_{E_{\tau}}\frac{d^{2} z_{i}}{\im\tau}S(a)=\oint_Adz_i S(a)+\sum_{j\neq i}\Res_{z_i=z_j}\frac{\im z_i}{\im\tau}S(a).
\end{equation}

By the relation of operations $B$, $C$ and the axiom of $S$, we have the nice decomposition for the residue term in \eqref{keyeq2}.
\begin{lem}\label{OPE} Let $S\in\mathcal{H}_V$, for any $a\in\mathcal{A}^{\otimes n}$, we have
\begin{equation}\label{modfiedBCD}
\Res_{z_i=z_j}\frac{\im z_i}{\im\tau}S(a)=\partial_{z_j}\left(\frac{\im z_j}{\im\tau}S(B_{i\rightarrow j}a)\right)+\frac{1}{2\sqrt{-1}\im\tau}S(C_{i\rightarrow j}a).
\end{equation}
\end{lem}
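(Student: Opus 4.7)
The plan is to compute the residue directly using the OPE structure encoded in the conformal block axiom (5), and then identify the first two terms of the Laurent expansion with the operations $B$ and $C$ via their definitions.

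First, since $S(a)$ is meromorphic in $z_i$ with possible pole only along $z_i = z_j$ (among the diagonals adjacent to $z_i$ that we are expanding near), axiom (5) gives the Laurent expansion
\begin{equation*}
S(a) \;=\; \sum_{k\geq 0} \frac{1}{(z_i - z_j)^{k+1}}\, S\bigl(a^{(k)}_{i\to j}\bigr) \;+\; (\text{regular in }z_i - z_j),
\end{equation*}
where $a^{(k)}_{i\to j}$ denotes the tensor obtained from $a$ by replacing $a_i$ at position $i$ with $|0\rangle$ (using axiom (3)) and replacing $a_j$ at position $j$ with $a_{i(k)}a_j$. I would then multiply by the smooth factor $\frac{\im z_i}{\im\tau}$ and compute the residue in $z_i$.

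The key observation is that $\frac{\im z_i}{\im\tau} = \frac{1}{2\sqrt{-1}\,\im\tau}(z_i - \bar z_i)$ satisfies $\partial_{z_i}\!\left(\frac{\im z_i}{\im\tau}\right) = \frac{1}{2\sqrt{-1}\,\im\tau}$ and $\partial_{z_i}^k\!\left(\frac{\im z_i}{\im\tau}\right) = 0$ for $k\geq 2$. Using the standard formula $\Res_{z_i=z_j}\frac{A(z_i,\bar z_i)}{(z_i-z_j)^{k+1}} = \frac{1}{k!}\,\partial_{z_i}^k A\big|_{z_i=z_j}$ for $A$ smooth in $z_i$, only the $k=0$ and $k=1$ terms contribute, giving
\begin{equation*}
\Res_{z_i=z_j}\frac{\im z_i}{\im\tau}S(a) \;=\; \frac{\im z_j}{\im\tau}\,S\bigl(a^{(0)}_{i\to j}\bigr) \;+\; \frac{1}{2\sqrt{-1}\,\im\tau}\,S\bigl(a^{(1)}_{i\to j}\bigr).
\end{equation*}

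To finish, I would translate the two summands using the algebraic identities in $\mathcal{A}$. By Definition \ref{defBC}, $a_{i(0)}a_j = T\,B(a_i,a_j)$, so axiom (4) gives $S(a^{(0)}_{i\to j}) = \partial_{z_j}S(B_{i\to j}a)$; and $a_{i(1)}a_j = B(a_i,a_j) + C(a_i,a_j)$, so $S(a^{(1)}_{i\to j}) = S(B_{i\to j}a) + S(C_{i\to j}a)$. Substituting and using $\partial_{z_j}\!\bigl(\frac{\im z_j}{\im\tau}\bigr) = \frac{1}{2\sqrt{-1}\,\im\tau}$ via the product rule, the two terms reassemble exactly into $\partial_{z_j}\!\left(\frac{\im z_j}{\im\tau}S(B_{i\to j}a)\right) + \frac{1}{2\sqrt{-1}\,\im\tau}S(C_{i\to j}a)$, which is the desired identity. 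No real obstacle arises; the only point requiring care is the bookkeeping of the non-holomorphic factor $\im z_i$ when computing the residue, but this is resolved by the elementary observation that $\partial_{z_i}^k(\im z_i)$ vanishes for $k\geq 2$.
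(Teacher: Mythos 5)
Your proof is correct and takes essentially the same route as the paper's: both reduce the identity to the $k=0$ and $k=1$ instances of axiom (5), namely $\Res_{z_i=z_j}S(a)=\partial_{z_j}S(B_{i\rightarrow j}a)$ and $\Res_{z_i=z_j}(z_i-z_j)S(a)=S(B_{i\rightarrow j}a)+S(C_{i\rightarrow j}a)$, and then reassemble them with the product rule applied to $\frac{\im z_j}{\im\tau}$. Your write-up merely makes explicit the bookkeeping of the non-holomorphic factor $\im z_i$ (only the first two Laurent coefficients survive) that the paper leaves implicit.
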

\begin{proof} This equation could also be decomposed into two equations
\begin{align*}
  \p_{z_j}S(B_{i\rightarrow j}a)&=\operatorname{Res}_{z_i=z_j}S(a),\\
  \operatorname{Res}_{z_i=z_j}(z_i-z_j)S(a)&=S(B_{i\rightarrow j}a)+S(C_{i\rightarrow j}a),
\end{align*}
which follow from the axioms (4) (5) for $S$ and the definition of $B,C$.
\end{proof}

The relation \eqref{BC2} of $B$ and $C$ also has an expression in terms of the $A$-cycle integral.
\begin{lem}\label{BCS}  Let $S\in\mathcal{H}_V$, for any $a\in {\mathcal{A}}^{\otimes n}$, we have
$$\oint_Adz_jS(C_{i\rightarrow j}a)=\oint_Adz_i S(B_{j\rightarrow i}a).$$
\end{lem}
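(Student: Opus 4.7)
The strategy is to reduce the claim to the earlier relation \eqref{BC2}, namely $C(a_i,a_j)-B(a_j,a_i)\in\im(T)$, and then kill the $T$-exact discrepancy using axiom (4) of the conformal block together with ellipticity of $S$. Fix $b\in\mathcal{A}$ with $Tb=C(a_i,a_j)-B(a_j,a_i)$.

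By axiom (3), both $S(C_{i\rightarrow j}a)$ and $S(B_{j\rightarrow i}a)$ reduce to $(n{-}1)$-point correlators after dropping the vacuum insertion. Denote the reduced correlator by $\widetilde S$. The left-hand side then becomes $\oint_A dw\,\widetilde S((C(a_i,a_j),w),(a_k,z_k)_{k\neq i,j})$ (after relabeling the dummy variable $z_j\mapsto w$), and the right-hand side becomes $\oint_A dw\,\widetilde S((B(a_j,a_i),w),(a_k,z_k)_{k\neq i,j})$ (relabeling $z_i\mapsto w$). By linearity of $S$ in the first slot together with axiom (4), the difference of the two integrands equals
$$\widetilde S((Tb,w),(a_k,z_k)_{k\neq i,j})=\frac{d}{dw}\widetilde S((b,w),(a_k,z_k)_{k\neq i,j}),$$
which is the $w$-derivative of a function in $\mathcal{F}_{n-1}$, hence elliptic of period $1$ in $w$. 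Choosing a representative of the $A$-cycle avoiding the (finitely many) polar loci $w=z_k$, the $A$-cycle integral of this total derivative vanishes, which proves the claim.

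No real obstacle is expected here: the result is essentially a bookkeeping consequence of \eqref{BC2} and the genus-one axioms. The only minor care needed is to pick the $A$-cycle representative disjoint from the diagonals $w=z_k$, which is routine since $\widetilde S$ has only finitely many singularities concentrated on the diagonal divisor.
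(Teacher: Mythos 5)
Your proof is correct and follows essentially the same route as the paper's: both reduce the discrepancy to a $T$-exact term via \eqref{BC2}, convert it to a total $w$-derivative by axiom (4), kill its $A$-cycle integral using periodicity, and match the remaining terms by relabeling the integration variable with axioms (2) and (3). No issues.
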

\begin{proof} By \eqref{BC2}, we have
$$C(a_i,a_j)=B(a_j,a_i)+Tb,\quad \text{for some }b\in\mathcal{A}.$$
Since $\oint_Adz_j \partial_{z_j}S(a_1\otimes\underset{\text{i-th}}{|0\rangle}\otimes\underset{\text{j-th}} {b}\otimes\cdots\otimes a_n)=0$, and using the axioms (2) (3), we get
\begin{align*}
   \oint_Adz_j S(C_{i\rightarrow j}a)&=\oint_Adz_j S(a_{1}\otimes \displaystyle \underset{\text{i-th}}{|0\rangle}\otimes\cdots \otimes\underset{\text{j-th}} {B(a_j,a_i)} \otimes \cdots\otimes a_{n})
 \\
   &= \oint_Adz_i S(a_{1}\otimes \displaystyle \underset{\text{i-th}}{B(a_j,a_i)}\otimes\cdots \otimes\underset{\text{j-th}} {|0\rangle} \otimes \cdots\otimes a_{n})\\
   &= \oint_Adz_i S(B_{j\rightarrow i}a).\qedhere
\end{align*}
\end{proof}

\begin{thm} For any $a\in\mathcal{A}^{\otimes l}$, and 
any $0<m,n<l$,  $$
\dashint_{E_{\tau}^{m}}S(a)\frac{dz_{j_1}^2}{\im\tau}\cdots\frac{dz_{j_m}^2}{\im\tau}\quad  \text{and} \quad \oint_{A^{n}}S(a)dz_{k_1}\cdots dz_{k_n}$$ 
satisfy the axioms (1)-(5) of the genus one conformal block.
\end{thm}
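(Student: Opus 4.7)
The plan is to verify axioms (1)--(5) of Definition \ref{genus1cb} separately for the two partial integrations
$$F_m(a) := \dashint_{E_\tau^m} S(a)\prod_{i=1}^m \frac{d^2 z_{j_i}}{\im\tau}, \qquad G_n(a) := \oint_{A^n} S(a)\prod_{i=1}^n dz_{k_i},$$
viewed as functions of the un-integrated coordinates. Axioms (1) (linearity) and (3) (vacuum insertion) are inherited immediately from the corresponding axioms for $S$ via $\C$-linearity of the two integrals. Axiom (4) reduces to the fact that $\partial_{z_j}$ for an un-integrated index $j$ commutes with both $\dashint_{E_\tau^m}$ and $\oint_{A^n}$, which is standard. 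For axiom (2), permutations among un-integrated slots follow directly from axiom (2) for $S$; permutations among integrated slots are the Fubini Theorem \ref{Fubini} in the case of $F_m$, and Lemma \ref{Commutative} in the case of $G_n$ (where Dijkgraaf's condition is essential). Fubini furthermore handles mixed permutations for $F_m$, while for $G_n$ such permutations reduce to the previous two cases.

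The main content is axiom (5), which requires
$$\Res_{w=z_{j'}}(w-z_{j'})^k\, F_m\bigl((a,w),(a_1,z_1),\ldots\bigr)=F_m\bigl((a_{(k)}a_{j'},z_{j'}),\ldots\bigr),$$
and analogously for $G_n$, with $z_{j'}$ an un-integrated coordinate. The plan is to commute $\Res_{w=z_{j'}}$ past the iterated integration and then apply axiom (5) for $S$ itself. For $G_n$, choose $A$-cycle representatives that avoid a neighborhood of $z_{j'}$: since $S((a,w),\dots)$ is holomorphic in the integration variables away from the diagonals, this deformation is legitimate and $\Res_{w=z_{j'}}$ trivially commutes with the closed contour integrals. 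For $F_m$, apply formula \eqref{keyeq} iteratively to rewrite each $\dashint_{E_\tau}$ as an $A$-cycle integral plus residues at diagonals among the \emph{integrated} variables only; since $z_{j'}$ is disjoint from those variables, $\Res_{w=z_{j'}}$ passes through each layer, and axiom (5) for $S$ then yields the required identity.

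The hard part will be the rigorous bookkeeping in axiom (5) for $F_m$: the insertion of $(a,w)$ creates new diagonal poles $\{w=z_{j_i}\}$ which must be distinguished from $\{w=z_{j'}\}$, and one must check that computing $\Res_{w=z_{j'}}$ before integrating in the $z_{j_i}$'s yields the same almost-meromorphic function as integrating first. This is permissible precisely because $z_{j'}$ is separated from the integration variables, but the careful verification uses the description of $\dashint_{E_\tau}$ via \eqref{keyeq} together with the axioms of $S$. Once the interchange is justified for a single regularized integration, iterating $m$ times gives axiom (5) for $F_m$, and the simpler contour argument gives it for $G_n$; the remaining axioms then complete the proof.
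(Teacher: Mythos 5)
Your proposal is correct and follows essentially the same route as the paper: reduce to a single integration, observe that axioms (1)--(3) are immediate, establish axioms (4) and (5) by commuting $\frac{d}{dz_i}$ and $\Res_{z_a=z_b}$ past the single integral for indices disjoint from the integrated one, and then induct. The only real difference is that the paper simply cites the needed commutation results (Lemma 2.26 of \cite{SiJie2020} for derivatives, and Lemma A.1 / Corollary A.2 of \cite{SiJie2023} for residues against $\oint_A$ and $\dashint_{E_\tau}$ respectively), whereas you sketch a direct justification of the residue commutation via \eqref{keyeq} and contour deformation --- which is essentially the content of those cited lemmas, and your acknowledged ``hard part'' (separating the new poles $w=z_{j_i}$ from $w=z_{j'}$) is exactly what they resolve.
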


\begin{proof} First, we look at the case $m,n=1$. It is obvious that both $\dashint_{E_{\tau}}\frac{dz_j^2}{\im\tau}S(a)$ and $\oint_Adz_kS(a)$ satisfy (1)-(3).
\begin{itemize}
    \item For $\dashint_{E_{\tau}}\frac{dz_j^2}{\im\tau}S(a)$,  
    \begin{itemize}
        \item the axiom (4) follows from \cite{SiJie2020} Lemma 2.26 that
        $$\left[\frac{d}{dz_i},~\dashint_{E_{\tau}}\frac{dz_j^2}{\im\tau}\right]=0,\quad i\neq j.$$ 
        \item the axiom (5) follows from \cite{SiJie2023} Corollary A.2 that
        \[\left[\Res_{z_a=z_b},~\dashint_{E_{\tau}}\frac{dz_j^2}{\im\tau}\right]=0,\quad\text{for distinct } ~a,b,j.\]
    \end{itemize}
    \item For $\oint_Adz_kS(a)$,
    \begin{itemize}
        \item the axiom (4) follows from the obvious identity 
        \[\left[\frac{d}{dz_i},~\oint_Adz_k\right]=0,\quad i\neq k.\]
        \item the axiom (5) follows from \cite{SiJie2023} Lemma A.1 that
        \[\left[\Res_{z_a=z_b},~\oint_Adz_k\right]=0,\quad \text{for distinct }~a,b,k.\]
    \end{itemize}
\end{itemize}
Then the desired results hold by induction.
\end{proof}
\begin{rmk}
The regularized integral $\dashint_{E_{\tau}^{m}}S(a)\frac{dz_{j_1}^2}{\im\tau}\cdots\frac{dz_{j_m}^2}{\im\tau}$ is not a genus one conformal block in the strict sense, as it involves $\im\tau$. However, since it is elliptic, one may interpret it as a smooth section of the bundle of (co)invariants over the moduli space of elliptic curves with marked points. The $A$-cycle integral $\oint_{A^{n}}S(a)dz_{k_1}\cdots dz_{k_n}$ is meromorphic but not elliptic, one can add corrections to make it elliptic, see Theorem \ref{ModifiedAcycleElliptic}.
\end{rmk}

\subsection{The modified $A$-cycle integrals}\label{secmodifiedA}
\subsubsection{Modified $A$-cycle integrals}
Now, by Equations \eqref{keyeq2} and \eqref{modfiedBCD}, we have
\begin{equation}\label{EmodifA}
\dashint_{E_{\tau}}\frac{d^{2} z_{i}}{\im\tau}S(a)=\oint_Adz_i S(a)+\sum_{j\neq i}\partial_{z_j}\left(\frac{\im z_j}{\im\tau}S(B_{i\rightarrow j}a)\right)+\sum_{j\neq i}\frac{1}{2\sqrt{-1}\im\tau}S(C_{i\rightarrow j}a).
\end{equation}

Thus, we can define the modified $A$-cycle integral by
\begin{equation}\label{modifiedAcycle}
\widehat{\oint}_Adz_iS(a)=\oint_Adz_i S(a)+\sum_{j\neq i}\partial_{z_j}\left(\frac{\im z_j}{\im\tau}S(B_{i\rightarrow j}a)\right).
\end{equation}
By Equation \eqref{EmodifA}, $\widehat{\oint}_A$ preserves the ellipticity. Unfortunately,
$$\left[\widehat{\oint}_{A}dz_n, \widehat{\oint}_{A}dz_m\right]S(a)\neq0.$$
There is a contact interaction between the two modified $A$-cycle integrals. So we need to make the second modification.

\begin{defn}\label{defn:notation}
For convenience, we simplify some notations and denote
\begin{itemize}
\setlength{\itemsep}{2pt}
\setlength{\parsep}{0pt}
\setlength{\parskip}{0pt}
\item For $I\subset\{1,\ldots,n\}$,
 $$E_I:=\dashint_{E_{\tau}^{|I|}}\prod_{i\in I}\frac{d^2z_i}{\operatorname{Im}\tau},\qquad A_I:=\prod_{i\in i}\oint_Adz_i$$
\item $\widehat{A}_i=\widehat{\oint}_Adz_i$.
\end{itemize}
\end{defn}

Now we want to define a modified (multi-)$A$-cycle integral $\widehat{A}_I$ associated to any subset $I\subset\{1,\ldots,n\}$, which preserves the ellipticity of functions. For $|I|=1$, this is done. Next we consider $I=\{1,2\}$ containing two indices.

\begin{defn} For $S\in\mathcal{H}_V$ and $a\in \mathcal{A}^{\otimes n}$, we define
\begin{equation}\label{modifiedA2}
\begin{aligned}
\widehat{A}_{\{1,2\}}S(a):=~&\widehat{\oint}_{A_1\times A_2}dz_1dz_2S(a)\\
                      =~&\oint_{A_1}dz_1\oint_{A_2}dz_2S(a)+\sum_{j\neq 1,2}\partial_{z_j}\left[\frac{\im z_j}{\im\tau}\oint_{A_2}S(B_{1\rightarrow j}a)\right]\\
                      &+\sum_{k\neq 1,2}\partial_{z_k}\left[\frac{\im z_k}{\im\tau}\oint_{A_1}S(B_{2\rightarrow k}a)\right]\\
                      &+\sum_{j,k\neq 1,2;j\neq k}\partial_{z_j}\left[\frac{\im z_j}{\im\tau}\partial_{z_k}\left(\frac{\im z_k}{\im\tau}S(B_{1\rightarrow j}B_{2\rightarrow k}a)\right)\right]\\
                      &+\sum_{k\neq 1,2}\partial_{z_k}\left[\left(\frac{\im z_k}{\im\tau}\right)^2\partial_{z_k}S(B_{1\rightarrow k}B_{2\rightarrow k}a)\right].
\end{aligned}
\end{equation}
\end{defn}

Inductively, one can define $\widehat{A}_I$ for any $I\subset\{1,2,\ldots,n\}$. To give the explicit definition, let us look at the terms in Equation \eqref{modifiedA2} carefully and introduce the following differential operators $D_{r,k}$, $r\geq1$,
$$D_{r,k}F:=\partial_{z_k}\left(\left(\frac{\im z_k}{\im\tau}\right)^r\partial_{z_k}^{r-1}F\right),\quad \text{for any }F\in\mathcal{R}_n^{E_{\tau}}.$$
Similarly, we will consider a general version of $D_{r,k}$ associated to any two disjoint sets $I$, $M$ and a map $\phi:I\rightarrow M$
$$D_{I\rightarrow M}^{\phi}:=\prod_{k\in M,\phi^{-1}(k)\neq \emptyset}D_{|\phi^{-1}(k)|,k}.$$

Now let $M$, $N$, $I$ and $I^c$ be sets as follows:
$$M=\{1,\ldots,m\},\quad N=\{m+1,\ldots,m+n\},\quad I\subset N,\quad I^c=N-I.$$
For $S\in \mathcal{H}_V$, we also use the notation $[a]$ instead of $S(a)$ for $a\in \mathcal{A}^{\otimes (m+n)}.$

\begin{defn}\label{iteratedmodifiedA} The modified $A$-cycle integral $\widehat{A}_N$ for the holomorphic conformal block on the torus is defined to be
$$
\widehat{A}_{N}[a]=\displaystyle \sum _{ \substack{I\subset N\\
\phi:I\rightarrow M}}D^{\phi}_{I\rightarrow M}A_{{I}^c} [B^{\phi}_{{I\rightarrow M}}a], \quad a\in \mathcal{A}^{\otimes (|M|+|N|)},\  I^c=N-I.
$$
In particular,
\begin{enumerate}
  \item $\lim\limits_{\im\tau\rightarrow\infty}\widehat{A}_N[a]=A_N[a]$.
  \item when $M=\emptyset$,
  $$\widehat{A}_N[a]=A_{N}[a]=\oint_Adz_1\cdots \oint_Adz_n[a].$$
\end{enumerate}

\end{defn}
\begin{rmk}
    It is clear that $\widehat{A}_N[a]$ can be written as $$\widehat{A}_N[a]=\sum\limits^{N_0}_{r=0}\frac{F_r(z,\tau)}{(\im\tau)^r},\quad\text{for some } N_0>0,$$
    where each $F_r(z,\tau)$ is meromorphic in $\tau$. We define the limit $\lim\limits_{\im\tau\rightarrow\infty}\widehat{A}_N[a]$ to be the constant term $F_0(z,\tau)$.
\end{rmk}

\subsubsection{Relations among the various iterated integrals $E_M$, $\widehat{A}_N$, $A_N$ and contact iterations}

Next we consider a generalization of \eqref{EmodifA}:
$$E_i[a]=\widehat{A}_i[a]+\sum_{j\neq i}\frac{1}{2\sqrt{-1}\im\tau}[C_{i\rightarrow j}a]$$
as in the next Theorem \ref{ModifiedAcycleElliptic}, which  explains the advantage of modified $A$-cycle integrals.

\begin{thm}\label{ModifiedAcycleElliptic} For any $a\in \mathcal{A}^{\otimes(|M|+|N|)}$, $\widehat{A}_{N}[a]$ is almost-meromorphic and elliptic, also satisfies the axioms (1)-(5) of the genus one conformal block. And we have
  $$
  E_{m}\widehat{A}_{N}[a]=\widehat{A}_{N\cup\{m\}}[a]+\frac{1}{2\sqrt{-1}\operatorname{Im}\tau}\sum_{1\leq j\leq m+n,j\neq m}\widehat{A}_{N}[C_{m\rightarrow j}a].
  $$
\end{thm}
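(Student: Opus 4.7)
The plan is to prove the three assertions together by induction on $|N|$, with the recursion serving as the inductive engine. For the base case $|N|=0$ there is nothing to do; for $|N|=1$, the recursion is exactly equation \eqref{EmodifA}, which displays $\widehat{A}_i[a]$ as $E_i[a]$ minus a manifestly elliptic contact correction, establishing ellipticity and axioms (1)--(5) in that case.

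For the recursion itself, I start from the identity
$$E_m F = \oint_A dz_m\, F + \sum_{j\neq m}\Res_{z_m=z_j}\frac{\im z_m}{\im\tau}\,F,$$
and apply it term by term to the expansion $\widehat{A}_N[a]=\sum_{I\subset N,\,\phi\colon I\rightarrow M}D^{\phi}_{I\rightarrow M}A_{I^c}[B^{\phi}_{I\rightarrow M}a]$, sorting the resulting pieces by the location of the singularity $z_j$. The $\oint_A dz_m$ piece commutes past both $A_{I^c}$ and every $D_{r,k}$ (the relevant variables are disjoint) and contributes the portion of $\widehat{A}_{N\cup\{m\}}[a]$ in which $m$ becomes a new $A$-cycle variable. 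For $j\in I^c$, Lemma \ref{OPE} splits the residue into a $\partial_{z_j}\bigl(\tfrac{\im z_j}{\im\tau}\,\cdot\,\bigr)$ part and a $\frac{1}{2\sqrt{-1}\im\tau}[C_{m\rightarrow j}\,\cdot\,]$ part; after sliding the derivative past the remaining $A$-cycle integrations (using ellipticity in the other variables), the first gives the new $\widehat{A}_{N\cup\{m\}}$ terms where $m$ is assigned as a fresh singleton fiber of the extended $\phi$, and the second contributes directly to the desired contact sum. The remaining cases $j\in M$ and $j\in I$ each produce, by Lemma \ref{OPE} applied inside an existing $D_{r,k}$, higher-order $D_{r+1,k}$ factors (which merge into $\widehat{A}_{N\cup\{m\}}$ at a fiber of $\phi$ that now absorbs $m$) together with mixed $CB$-iterations that must be repackaged.

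The technical heart of the proof, and the step I expect to be the main obstacle, is this repackaging. The key input is Proposition \ref{Useful}, $[C_{m\rightarrow k},B_{I\rightarrow k}]+|I|\,B_{I\cup\{m\}\rightarrow k}=\sum_{l\in I}B_{I\rightarrow k}C_{m\rightarrow l}$, which is precisely the identity needed to collapse the mixed contact iterations arising from cases $j\in I$ and $j\in\phi(I)$ into the clean form $\widehat{A}_N[C_{m\rightarrow j}a]$ on the right-hand side. I would organize the sum over pairs $(I,\phi)$ according to the eventual role of the new index $m$ (new $A$-cycle variable, new singleton fiber, or added to an existing fiber of $\phi$) and track the cancellations case-by-case, treating the factor $|I|$ in Proposition \ref{Useful} as bookkeeping for the multiplicity of ways $m$ can be inserted into a fiber.

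Once the recursion is established, almost-meromorphic ellipticity is automatic: $E_m$ preserves $\mathcal{R}^{E_\tau}_\bullet$, and by induction both $\widehat{A}_N[a]$ and $\widehat{A}_N[C_{m\rightarrow j}a]$ lie in the appropriate $\mathcal{R}^{E_\tau}$, so solving the recursion for $\widehat{A}_{N\cup\{m\}}[a]$ keeps it in this class. Axioms (1)--(3) are inherited tautologically from $S$, while axioms (4)--(5) follow by the same induction from the fact that $\tfrac{d}{dz_i}$ and $\Res_{z_a=z_b}$ commute past $\oint_A dz_k$ and past each $D_{r,k}$ when the free indices are distinct; these commutations are provided by the lemmas of \cite{SiJie2023} already invoked in the preceding subsection.
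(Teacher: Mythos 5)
Your overall architecture (induction on $|N|$, Lemma \ref{OPE} for the residues, Proposition \ref{Useful} as the combinatorial engine for merging the mixed $CB$-iterations, and deducing ellipticity and axioms (1)--(5) from the recursion) agrees with the paper. But the central analytic step is handled incorrectly, and this is a genuine gap rather than a presentational difference. You propose to apply the identity $E_mF=\oint_A dz_m\,F+\sum_{j\neq m}\Res_{z_m=z_j}\frac{\im z_m}{\im\tau}F$ \emph{term by term} to the expansion $\widehat{A}_N[a]=\sum_{I,\phi}D^{\phi}_{I\rightarrow M}A_{I^c}[B^{\phi}_{I\rightarrow M}a]$. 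That identity (Equation \eqref{keyeq}) is only valid for elliptic almost-meromorphic functions whose components are meromorphic in $z_m$; the individual summands are neither elliptic (only the full sum is --- that is the entire point of the $\partial_{z_j}(\frac{\im z_j}{\im\tau}\cdots)$ corrections) nor meromorphic in $z_m$ when $m\in\phi(I)$, since those terms carry a factor $D_{r,m}=\partial_{z_m}\bigl(\bigl(\frac{\im z_m}{\im\tau}\bigr)^{r}\partial_{z_m}^{r-1}(\cdot)\bigr)$. Applying Stokes term by term to a non-elliptic summand leaves an uncancelled $B$-cycle boundary term, so the formula simply does not hold for the individual pieces. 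The paper resolves this by proving Lemma \ref{RegIntegralF}, a global Stokes computation for the full elliptic sum written as $F=F_0+\sum_l\partial_{z_m}((\frac{\im z_m}{\im\tau})^l\partial_{z_m}^{l-1}F_l)$, which yields $E_mF=\oint_Adz_m\bigl(F_0+\frac{1}{2\sqrt{-1}\im\tau}F_1\bigr)+\sum_{k\in M'}\Res_{z_m=z_k}\frac{\im z_m}{\im\tau}F_0$: the residues involve only $F_0$ and only the points $z_k$ with $k\in M'$, and there is an extra $A$-cycle contribution $\frac{1}{2\sqrt{-1}\im\tau}F_1$ coming from the quasi-periodicity of the primitive. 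Nothing in your proposal produces this formula.

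As a consequence your bookkeeping of where the contact terms come from is wrong. You attribute the terms $\widehat{A}_N[C_{m\rightarrow j}a]$ with $j\in N$ to ``residues at $z_j$ for $j\in I^c$'' --- but those variables have already been integrated over $A$-cycles inside $\widehat{A}_N[a]$, so there are no such residues; the only singularities of $\widehat{A}_N[a]$ in $z_m$ sit at $z_m=z_k$, $k\in M'$. In the paper the $j\in N$ contact terms arise from the $\frac{1}{2\sqrt{-1}\im\tau}F_1$ piece of Lemma \ref{RegIntegralF} combined with the exchange identity $\oint_Adz_m\,S(B_{l\rightarrow m}a)=\oint_Adz_l\,S(C_{m\rightarrow l}a)$ of Lemma \ref{BCS} (this is Lemma \ref{B=C}), while the $j\in M'$ terms come from the residues of $F_0$ via Lemmas \ref{SingleCollide} and \ref{MultipleCollide}. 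To repair your argument you would need to (i) prove a statement equivalent to Lemma \ref{RegIntegralF} for the whole sum rather than invoking \eqref{keyeq} on each summand, and (ii) redo the case analysis so that the $j\in N$ contact terms are traced to the $A$-cycle correction rather than to nonexistent residues. The remainder of your plan (Proposition \ref{Useful} for the residues landing inside an existing fiber $D_{r,k}$, and the inductive propagation of ellipticity and axioms) then goes through as in the paper.
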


\begin{proof} See in Subsection \ref{proofofpropthm}.
\end{proof}

\begin{cor}
  For $a\in \mathcal{A}^{\otimes (m+l)}$, we have 
  $$
  E_{\{1,\dots, m\}}[a]=\sum_{k\geq 0}\frac{1}{(2\sqrt{-1}\operatorname{Im}\tau)^k}\displaystyle \sum _{ \substack{I=\{i_1,\dots,i_k\}\\1\leq i_1<\cdots<i_k\leq m\\ 1\leq j_1,\dots,j_k\leq m+l}}\widehat{A}_{I^c}[C_{i_1\rightarrow j_1}\cdots C_{i_k\rightarrow j_k}a],
  $$
  here $C_{i\rightarrow i}:=0$, for any $i\leq m , I^c=\{1,\dots,m\}-I$. In particular, when $l=0, a\in\mathcal{A}^{\otimes m}$, we have
  $$
  E_{\{1,\dots, m\}}[a]=\sum_{k\geq 0}\frac{1}{(2\sqrt{-1}\operatorname{Im}\tau)^k}\displaystyle \sum _{ \substack{I=\{i_1,\dots,i_k\}\\1\leq i_1<\cdots<i_k\leq m\\ 1\leq j_1,\dots,j_k\leq m}}{A}_{I^c}[C_{i_1\rightarrow j_1}\cdots C_{i_k\rightarrow j_k}a].
  $$
\end{cor}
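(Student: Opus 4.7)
The plan is to prove the corollary by induction on $m$, with Theorem \ref{ModifiedAcycleElliptic} serving as the single-step recursion and Fubini's theorem for regularized integrals (Theorem \ref{Fubini}) allowing us to write $E_{\{1,\ldots,m\}}=E_m\circ E_{m-1}\circ\cdots\circ E_1$. Set $F_s:=E_{\{1,\ldots,s\}}[a]$. The base case $F_0=[a]=\widehat{A}_\emptyset[a]$ matches the $k=0$ term (with $I=\emptyset$) of the claimed formula for $F_0$.

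Assume the formula holds for $F_s$. Apply $E_{s+1}$ to a generic summand $\widehat{A}_{\{1,\ldots,s\}\setminus I}[b]$ with $b=C_{i_1\to j_1}\cdots C_{i_k\to j_k}a$ and $I=\{i_1<\cdots<i_k\}\subset\{1,\ldots,s\}$. Since $b\in\mathcal{A}^{\otimes(m+l)}$, the spectator set is $\{1,\ldots,m+l\}\setminus(\{1,\ldots,s\}\setminus I)=I\cup\{s+1,\ldots,m+l\}$, which contains $s+1$, so Theorem \ref{ModifiedAcycleElliptic} applies and gives
\begin{equation*}
E_{s+1}\widehat{A}_{\{1,\ldots,s\}\setminus I}[b]=\widehat{A}_{\{1,\ldots,s+1\}\setminus I}[b]+\frac{1}{2\sqrt{-1}\operatorname{Im}\tau}\sum_{1\leq j\leq m+l,\ j\neq s+1}\widehat{A}_{\{1,\ldots,s\}\setminus I}[C_{s+1\to j}b].
\end{equation*}
The first piece contributes to $F_{s+1}$ with unchanged contact data $(I;j_1,\ldots,j_k)$, corresponding to the case $s+1\notin I_{\text{new}}$ in the target formula. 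The second piece contributes with new entries $i_{k+1}:=s+1$ and $j_{k+1}:=j$, corresponding to $s+1\in I_{\text{new}}$; the constraint $j\neq s+1$ is precisely the convention $C_{i\to i}:=0$. Partitioning the target formula for $F_{s+1}$ according to whether $s+1\in I_{\text{new}}$ shows these two pieces account for everything, closing the induction. Setting $s=m$ gives the first equation of the corollary.

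For the specialization $l=0$, it remains to check that $\widehat{A}_{I^c}$ collapses to the ordinary iterated $A$-cycle $A_{I^c}$ on each surviving term. The contact operations leave the slots indexed by $I=\{i_1,\ldots,i_k\}$ carrying the vacuum $|0\rangle$, so in Definition \ref{iteratedmodifiedA} the spectator set is $M=I$. Any term indexed by $J\neq\emptyset$ contains at least one factor of the form $B(a_\bullet,|0\rangle)=T^{-1}(a_{\bullet,(0)}|0\rangle)$, which vanishes by the vacuum axiom $a_{(0)}|0\rangle=0$; only $J=\emptyset$ survives, giving $\widehat{A}_{I^c}[b]=A_{I^c}[b]$. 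The only real bookkeeping subtlety, which I expect to be benign, is confirming that the ranges of $(I;j_1,\ldots,j_k)$ produced by iterating the recursion reassemble into the symmetric range in the statement; the essential analytic content has already been packaged into Theorem \ref{ModifiedAcycleElliptic} and Fubini.
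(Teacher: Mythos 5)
Your proposal is correct and follows essentially the same route as the paper: induction on $m$ with Theorem \ref{ModifiedAcycleElliptic} as the one-step recursion, the only (immaterial, by Fubini) difference being that the paper peels off $E_1$ from $E_{\{2,\dots,m\}}$ while you peel off $E_{s+1}$ from $E_{\{1,\dots,s\}}$. Your extra justification of the $l=0$ collapse $\widehat{A}_{I^c}=A_{I^c}$ via $a_{(0)}|0\rangle=0$ on the vacuum-filled spectator slots is sound and is more detail than the paper supplies.
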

\begin{proof}
  We prove this corollary by induction on $m$. If $m=1$, we apply Proposition \ref{ModifiedAcycleElliptic} to the case when $N=\emptyset.$ Now suppose $m>1$, by induction hypothesis and Proposition \ref{ModifiedAcycleElliptic}, we have
  \begin{align*}
       E_1E_{\{2,\dots, m\}}[a]&=E_1\sum_{k\geq 0}\frac{1}{(2\sqrt{-1}\operatorname{Im}\tau)^k}\displaystyle \sum _{ \substack{I=\{i_1,\dots,i_k\}\\2\leq i_1<\cdots<i_k\leq m\\ 1\leq j_1,\dots,j_k\leq m+l}}{\widehat{A}}_{\{2,\dots,m\}-I}[C_{i_1\rightarrow j_1}\cdots C_{i_k\rightarrow j_k}a]\\
     &= \sum_{k\geq 0}\frac{1}{(2\sqrt{-1}\operatorname{Im}\tau)^k}\displaystyle \sum _{ \substack{I=\{i_1,\dots,i_k\}\\2\leq i_1<\cdots<i_k\leq m\\ 1\leq j_1,\dots,j_k\leq m+l}}{\widehat{A}}_{\{1,2,\dots,m\}-I}[C_{i_1\rightarrow j_1}\cdots C_{i_k\rightarrow j_k}a]\\
     &+\sum_{k\geq 0}\frac{1}{(2\sqrt{-1}\operatorname{Im}\tau)^{k+1}}\displaystyle \sum _{ \substack{I=\{i_1,\dots,i_k\}\\2\leq i_1<\cdots<i_k\leq m\\ 1\leq j, j_1,\dots,j_k\leq m+l}}{\widehat{A}}_{\{2,\dots,m\}-I}[C_{1\rightarrow j}C_{i_1\rightarrow j_1}\cdots C_{i_k\rightarrow j_k}a]\\
     &=\sum_{k\geq 0}\frac{1}{(2\sqrt{-1}\operatorname{Im}\tau)^k}\displaystyle \sum _{ \substack{I=\{i_1,\dots,i_k\}\\1\leq i_1<\cdots<i_k\leq m\\ 1\leq j_1,\dots,j_k\leq m}}{\widehat{A}}_{\{1,\dots,m\}-I}[C_{i_1\rightarrow j_1}\cdots C_{i_k\rightarrow j_k}a].\qedhere
  \end{align*}
\end{proof}

\subsection{The mixed type partition function and Dijkgraaf's master equation}\label{mixDijk}
\begin{defn}\label{mixtype}
 We define the genus one correlators of mixed type associated to an element $[-]\in \mathcal{H}_V$  as follows
\begin{align*}
 \langle -\rangle_{m,n}:&~{\mathcal{A}}^{\otimes(m+n)}\longrightarrow \mathbb{C},\\
    \langle a\rangle_{m,n}:&~=~  E_M\widehat{A}_{N}[a],
\end{align*}
where $a\in {\mathcal{A}}^{\otimes (m+n)}$, $M=\{1,\dots,m\}$, $N=\{m+1,\dots, m+n\}.$
\end{defn}

\begin{thm}\label{Contactequation}
Let  $v=v_{k_1}\otimes \cdots\otimes v_{k_{m+n}}$ with $v_{k_i}$ the element in the Dijkgraaf's condition. Then we have
$$
\langle v\rangle_{m,n}=\langle v\rangle_{m-1,n+1}+\frac{1}{2\sqrt{-1}\operatorname{Im}\tau}\sum_{1\leq j\leq m+n,j\neq m}\sum_{l\in\mathbb{N}}c^l_{k_mk_j}\langle v_{k_1}\otimes \displaystyle \underset{\text{m-th}}{1}\otimes\cdots \otimes\underset{\text{j-th}} {v_l} \otimes \cdots\otimes v_{k_{m+n}}\rangle_{m,n}.
$$
\end{thm}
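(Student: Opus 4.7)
The plan is to iterate Theorem~\ref{ModifiedAcycleElliptic} once and then use Dijkgraaf's condition to convert the resulting $C$-operators into structure constants. By the Fubini property of the regularized integral (Theorem~\ref{Fubini}) we write $\langle v\rangle_{m,n}=E_{\{1,\dots,m-1\}}\bigl(E_m\widehat{A}_N[v]\bigr)$, and apply Theorem~\ref{ModifiedAcycleElliptic} to obtain
\[
E_m\widehat{A}_N[v] = \widehat{A}_{N\cup\{m\}}[v] + \frac{1}{2\sqrt{-1}\im\tau}\sum_{1\leq j\leq m+n,\,j\neq m}\widehat{A}_N[C_{m\rightarrow j}v].
\]
After applying $E_{\{1,\dots,m-1\}}$, the first summand becomes $\langle v\rangle_{m-1,n+1}$ by definition. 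On the algebraic side, the strengthened Dijkgraaf's condition $v_{k_j,(0)}v_{k_m}-T\sum_l c^l_{k_mk_j}v_l\in T^2\mathcal{A}$, Lemma~\ref{T-Inverse}, and the identity~\eqref{BC2} combine to give
\[
C(v_{k_m},v_{k_j}) = \sum_l c^l_{k_mk_j}\,v_l + Tw,\qquad w\in\mathcal{A}.
\]
The task then reduces to showing that the $Tw$-piece is killed by $E_{\{1,\dots,m-1\}}\widehat{A}_N$, while the structure-constant piece assembles into the right-hand side of the theorem.

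The $Tw$-vanishing is the crux of the argument and splits into two cases according to whether $j\in N$ or $j\in\{1,\dots,m-1\}$. For $j\in N$, expand $\widehat{A}_N$ via Definition~\ref{iteratedmodifiedA} as $\sum_{(I,\phi)}D^\phi_{I\rightarrow M}A_{I^c}\bigl[B^\phi_{I\rightarrow M}(\cdot)\bigr]$ and argue summand by summand: if $j\in I$, then $B_{j\rightarrow\phi(j)}$ is applied to a $Tw$-slot, producing $T^{-1}\bigl((Tw)_{(0)}(\cdot)\bigr)=0$ by Proposition~\ref{immediateconseq}(2); if $j\in I^c$, axiom~(4) of the conformal block turns $Tw$ into $\partial_{z_j}$ and the closed $A$-cycle integral $\oint_A dz_j\,\partial_{z_j}(\cdot)$ vanishes. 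For $j\in\{1,\dots,m-1\}$, Theorem~\ref{ModifiedAcycleElliptic} already furnishes axiom~(4) for $\widehat{A}_N$ itself, so $\widehat{A}_N[\dots,Tw,\dots]=\partial_{z_j}\widehat{A}_N[\dots,w,\dots]$, and Stokes' formula for the regularized integral (Theorem~\ref{Stokes}) on the boundaryless torus yields $E_j\partial_{z_j}(\cdot)=0$ since the antiderivative $(2i\im\tau)^{-1}F\,d\bar z_j$ carries no holomorphic residue along the diagonals.

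To match the structure-constant piece with the RHS, I verify that $E_{\{1,\dots,m-1\}}\widehat{A}_N[v']=\langle v'\rangle_{m,n}$ whenever the $m$-th slot of $v'$ is $|0\rangle$. In the expansion of $\widehat{A}_N[v']$, any summand with $\phi^{-1}(m)\neq\emptyset$ contains $B(\cdot,|0\rangle)=T^{-1}\bigl((\cdot)_{(0)}|0\rangle\bigr)=0$ by the vacuum axiom and so vanishes, while every remaining summand is independent of $z_m$ by axiom~(3) together with the fact that $D^\phi$ only involves coordinates $z_k$ with $\phi^{-1}(k)\neq\emptyset$. Hence $E_m$ acts as the identity because $\int_{E_\tau}d^2z/\im\tau=1$, and the identification follows. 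The principal obstacle is the uniform treatment of the $Tw$-vanishing across both the algebraic operators $B_{j\rightarrow\phi(j)}$ and the analytic integrals $\oint_A,\dashint_{E_\tau}$, making sure no residue or boundary contribution sneaks in from the almost-meromorphic factors $\operatorname{Im}z_k/\im\tau$ inside the $D^\phi$ operators.
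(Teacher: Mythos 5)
Your proposal follows the same skeleton as the paper's proof: split $E_M=E_{M'}E_m$ by Fubini, apply Theorem~\ref{ModifiedAcycleElliptic} once, identify $E_{M'}\widehat{A}_{N\cup\{m\}}[v]$ with $\langle v\rangle_{m-1,n+1}$, write $C(v_{k_m},v_{k_j})=\sum_l c^l_{k_mk_j}v_l+Tw$ via \eqref{BC2} and the strengthened Dijkgraaf condition, and then kill the $Tw$-contribution. Where you diverge is in how that vanishing is established. The paper isolates it as Lemma~\ref{VanishTotal} ($E_M\widehat{A}_N\partial_{z_k}[a]=0$ for $k\in M$), proved by a separate induction on $|N|$ that reapplies the integral identity of Theorem~\ref{ModifiedAcycleElliptic} and uses $C(a,Tb)=T(a_{(1)}b)\in\im(T)$. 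Your case $j\in N$ (each summand of Definition~\ref{iteratedmodifiedA} dies because either $B(Tw,\cdot)=T^{-1}((Tw)_{(0)}\cdot)=0$ or $\oint_Adz_j\,\partial_{z_j}(\cdot)=0$) is a clean direct argument for a configuration that the paper's lemma does not literally cover, and your verification that $E_m$ acts as the identity on a vacuum slot is a detail the paper leaves implicit; both are correct.

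The step I would not accept as written is the case $j\in\{1,\dots,m-1\}$, where you invoke ``axiom (4) for $\widehat{A}_N$'' to get $\widehat{A}_N[T_j(\cdot)]=\partial_{z_j}\widehat{A}_N[\cdot]$. Although Theorem~\ref{ModifiedAcycleElliptic} nominally asserts axioms (1)--(5), this identity does not follow from Definition~\ref{iteratedmodifiedA}: the operator $T$ does pass through $B^{\phi}$ (since $B(a_i,Tw)=T(B(a_i,w))$), but $\partial_{z_j}$ fails to commute with the multiplication by $(\operatorname{Im}z_j/\operatorname{Im}\tau)^{r}$ inside $D_{r,j}$, so every summand with $\phi^{-1}(j)\neq\emptyset$ produces a defect of the form $\frac{r}{2\sqrt{-1}\operatorname{Im}\tau}\,\partial_{z_j}\bigl((\operatorname{Im}z_j/\operatorname{Im}\tau)^{r-1}\partial_{z_j}^{r-1}A_{I^c}[B^{\phi}_{I\to M}(\cdot)]\bigr)$. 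Already for $M=\{1,3\}$, $N=\{2\}$ one finds
\begin{equation*}
\widehat{A}_{\{2\}}[Ta_1\otimes a_2\otimes a_3]-\partial_{z_1}\widehat{A}_{\{2\}}[a_1\otimes a_2\otimes a_3]=-\tfrac{1}{2\sqrt{-1}\operatorname{Im}\tau}\,\partial_{z_1}S\bigl((B(a_2,a_1),z_1),(a_3,z_3)\bigr)\neq 0,
\end{equation*}
so the quoted property cannot carry the argument by itself. Your conclusion nevertheless survives, because each defect is again a total $z_j$-derivative of an elliptic almost-meromorphic function and is annihilated by $E_j$ (for the $r=1$ defect this uses $\oint_A\partial_{z_j}=0$ together with the residue theorem for elliptic functions); but establishing precisely this is the content of the paper's Lemma~\ref{VanishTotal}. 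You should either verify the vanishing of these defect terms explicitly or route the $j\in\{1,\dots,m-1\}$ case through that lemma rather than through the asserted axiom (4).
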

We will prove this theorem in the next subsection.

\begin{defn}

We define the generating function of $\langle v\rangle_{m,n}$, i.e., the genus one partition function of mixed type to be
\begin{equation}\label{Zst}
Z(s,t):=\sum_{m\geq0}\sum_{n\geq0}\frac{1}{m!}\frac{1}{n!}\left\langle \left(\sum_{\alpha}{t^{\alpha}v_{\alpha}}\right)^{\otimes m}\otimes\left(\sum_{\beta}{s^{\beta}v_{\beta}}\right)^{\otimes n}\right\rangle_{m,n},
\end{equation}
where the two summations $\sum$ are taken over all the countable elements ${v_i}$ in the Dijkgraaf's condition, and $t^{\alpha}$, $s^{\beta}$ are formal variables.
\end{defn}

Theorem \ref{Contactequation} implies the following Dijkgraaf's master equation \cite{Dijk1996master}.
\begin{thm}\label{DijkgraafMaster}
The partition function $Z=Z(s,t)$ satisfies the following linear differential equation
\begin{equation}\label{Dijkmast}
\frac{\partial Z}{\partial t^i}=\left[\frac{\partial}{\partial s^i}+\frac{1}{2\sqrt{-1}\operatorname{Im}\tau}(L^{(s)}_i+L^{(t)}_i)\right]Z, \quad \forall ~i\in \mathbb{N},
\end{equation}
here
$$
L^{(s)}_i=\sum_{j,k\in\mathbb{N}}c^k_{ij}s^j\frac{\partial}{\partial s^k}, \quad L^{(t)}_i=\sum_{j,k\in\mathbb{N}}c^k_{ij}t^j\frac{\partial}{\partial t^k}.
$$
\end{thm}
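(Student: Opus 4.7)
The plan is to differentiate $Z$ with respect to $t^i$ term-by-term, apply the contact equation of Theorem~\ref{Contactequation} at a single slot, and identify the three resulting pieces with $\partial Z/\partial s^i$, $(2\sqrt{-1}\,\im\tau)^{-1}L_i^{(t)}Z$, and $(2\sqrt{-1}\,\im\tau)^{-1}L_i^{(s)}Z$, respectively. Writing $T := \sum_\alpha t^\alpha v_\alpha$ and $S := \sum_\beta s^\beta v_\beta$, termwise differentiation of \eqref{Zst} yields
\[
\frac{\partial Z}{\partial t^i} \;=\; \sum_{m\geq 1,\,n\geq 0}\frac{1}{(m-1)!\,n!}\,\langle T^{\otimes(m-1)}\otimes v_i \otimes S^{\otimes n}\rangle_{m,n},
\]
where the $S_m$-symmetry of $\langle\cdot\rangle_{m,n}$ in the first $m$ slots (a consequence of Fubini for the regularized integral together with the built-in symmetry of $\widehat{A}_N$ in Definition~\ref{iteratedmodifiedA}) was used to place $v_i$ at position $m$.

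Applying Theorem~\ref{Contactequation} at slot $m$, the leading piece $\langle T^{\otimes(m-1)}\otimes v_i\otimes S^{\otimes n}\rangle_{m-1,n+1}$ reassembles to $\partial Z/\partial s^i$ after the index shift $m-1\mapsto m$ and $S_{n+1}$-symmetry in the $N$-block. For the contact piece, the summation index $j\neq m$ runs over the remaining $m-1$ positions in the $M$-block and the $n$ positions in the $N$-block, while the $m$-th slot is now occupied by the vacuum $|0\rangle$. The crucial reduction identity asserts that a correlator with vacuum at position $m$ coincides with the $(m-1,n)$-correlator obtained by deleting that slot:
\[
\langle a_1\otimes\cdots\otimes\underset{m\text{-th}}{|0\rangle}\otimes\cdots\otimes a_{m+n}\rangle_{m,n}
\;=\; \langle a_1\otimes\cdots\otimes a_{m-1}\otimes a_{m+1}\otimes\cdots\otimes a_{m+n}\rangle_{m-1,n}.
\]
I would prove this by unfolding Definition~\ref{iteratedmodifiedA}: whenever $\phi\colon I\to M$ has $m\in\phi(I)$, the factor $B(a_k,|0\rangle)=T^{-1}(a_{k\ (0)}|0\rangle)=0$ annihilates that term, so only $\phi$'s avoiding $m$ survive; axiom~(3) of the conformal block then removes the $z_m$-dependence of what remains, and $\int_{E_\tau}\frac{d^2z_m}{\im\tau}=1$ makes $E_m$ act as the identity.

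With this reduction in hand, the contact sum over $j\in M\setminus\{m\}$ gains a factor $m-1$ by $M$-symmetry; combining this with the prefactor $1/((m-1)!\,n!)$, performing the index shift $m-1\mapsto m$, and observing that $\sum_{\alpha,l}c^l_{i\alpha}t^\alpha v_l$ is precisely the vector that $L_i^{(t)}Z$ inserts in place of one factor of $T$, one obtains $(2\sqrt{-1}\,\im\tau)^{-1}L_i^{(t)}Z$. The contact sum over $j\in N$ produces a factor $n$ by $N$-symmetry, and an analogous rearrangement yields $(2\sqrt{-1}\,\im\tau)^{-1}L_i^{(s)}Z$. Assembling the three pieces gives \eqref{Dijkmast}. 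The most delicate step is the vacuum-slot reduction identity above: one must verify that every differential operator $D_{r,k}$ and every cycle integral $\oint_A$ in the expansion of $\widehat{A}_N$ that could act at slot $m$ either vanishes via $a_{k\ (0)}|0\rangle=0$ or can be carried past slot $m$ using axiom~(3); once that is in place, the factorial bookkeeping is a direct combinatorial check.
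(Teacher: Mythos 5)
Your proposal is correct and is in substance the same argument as the paper's: the paper simply observes that applying $\prod_i\partial_{t^{\alpha_i}}\prod_j\partial_{s^{\beta_j}}\big|_{s=t=0}$ to \eqref{Dijkmast} reduces it to the identity of Theorem \ref{Contactequation}, which is exactly your generating-function reassembly run in the opposite direction. The one point you make explicit that the paper leaves implicit is the vacuum-slot reduction $\langle\cdots\otimes|0\rangle\otimes\cdots\rangle_{m,n}=\langle\cdots\rangle_{m-1,n}$, and your justification of it (the terms of $\widehat{A}_N$ with $\phi^{-1}(m)\neq\emptyset$ die because $B(a,|0\rangle)=T^{-1}(a_{(0)}|0\rangle)=0$, axiom (3) kills the remaining $z_m$-dependence, and $\int_{E_\tau}d^2z_m/\operatorname{Im}\tau=1$) is exactly right and a worthwhile addition.
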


\begin{proof} For any $m,n\geq0$, applying $\prod_{i=1}^m\prod_{j=1}^{n}\frac{\p}{\p t^{\alpha_i}}\frac{\p}{\p s^{\beta_j}}\bigg|_{s=t=0}$ to both sides of \eqref{Dijkmast}, we arrive at the identity of the form in Theorem \ref{Contactequation}.
\end{proof}

\subsection{Proof of Theorem \ref{ModifiedAcycleElliptic} and Theorem \ref{Contactequation}}\label{proofofpropthm}
In this subsection, we complete the proof of Theorem \ref{ModifiedAcycleElliptic} and Theorem \ref{Contactequation}. Throughout this section we denote
\begin{align*}
&M=\{1,\ldots,n\},\quad N=\{m+1,\ldots,m+n\},\\
&M'=M-\{m\},\quad M_k'=M'-\{k\},~\text{ for any }k\in M'\\
&I^c:=N-I,~\text{ for any } I\subset N.
\end{align*}

Since the computation of the proof is cumbersome, we explain the proof idea first. We want to prove Theorem \ref{ModifiedAcycleElliptic} by induction on $|N|$.
\begin{itemize}
  \item The first step ($|N|=0$) for induction is trivial.
  \item For the second step, we assume that $\hat{A}_N[a]$ is almost-meromorphic and elliptic, satisfies the axioms (1)-(5), and then prove the following identity holds
$$E_{m}\widehat{A}_{N}[a]=\widehat{A}_{N\cup\{m\}}[a]+\frac{1}{2\sqrt{-1}\operatorname{Im}\tau}\sum_{0\leq j\leq m+n,j\neq m}\widehat{A}_{N}[C_{m\rightarrow j}a].$$
It follows that $\widehat{A}_{N\cup\{m\}}[a]$ is almost-meromorphic and elliptic, satisfies the axioms (1)-(5).
\end{itemize}

One of the key points in the second step is that we have to compute the surface integral for an almost-meromorphic and elliptic function. There is such a formula in Lemma 3.26 \cite{SiJie2020}. But according to our definition of the modified $A$-cycle integral $\hat{A}_N$, we need a modified form of that formula. For completeness, we write our formula as follows and prove it based on the methods in Lemma 3.26 \cite{SiJie2020}.

\begin{lem}\label{RegIntegralF}
  Let $F(z_1,\dots,z_m;\tau)$ be an almost-meromorphic elliptic function on $\mathbb{C}^{m}\times \mathbf{H}$. Assume that $F$ can be written as
   $$
  F=F_0+\sum^n_{l=1}\partial_{z_m}\left(\left(\frac{\operatorname{Im}z_m}{\operatorname{Im}\tau}\right)^{l}\partial^{l-1}_{z_m}F_l\right),
  $$
  where $F_0(-,z_m;\tau),\dots, F_n(-,z_m;\tau)$ are meromorphic functions on $\mathbb{C}\times \mathbf{H}$ . Moreover, we assume that each $F_i$ is holomorphic way from diagonals. Then
$$
E_mF=\oint_Adz_m\left(F_0+\frac{1}{2\sqrt{-1}\operatorname{Im}\tau}F_1\right)+\sum_{1\leq k\leq m-1}\operatorname{Res}_{z_m=z_k}\left(\frac{\operatorname{Im}z_m}{\operatorname{Im}\tau}F_0\right).
$$

  \end{lem}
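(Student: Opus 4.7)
The plan is to apply the fundamental identity \eqref{keyeq},
$$E_m F = \oint_A dz_m\,F + \sum_{k=1}^{m-1}\operatorname{Res}_{z_m=z_k}\frac{\operatorname{Im} z_m}{\operatorname{Im}\tau}\,F,$$
to the assumed decomposition $F = F_0 + \sum_{l=1}^n\partial_{z_m}\bigl((\operatorname{Im} z_m/\operatorname{Im}\tau)^{l}\partial_{z_m}^{l-1}F_l\bigr)$, and then to evaluate each of the two pieces on the right-hand side using the special form of $F$. The key structural inputs are the product-rule identity $\partial_{z_m}(\operatorname{Im} z_m/\operatorname{Im}\tau) = -i/(2\operatorname{Im}\tau)$ and the fact that derivatives $\partial_{z_m}^{l-1}F_l$ with $l \geq 2$ are exact total derivatives in $z_m$ (hence have vanishing meromorphic residues).

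For the A-cycle piece I would parametrize the A-cycle by $\operatorname{Im} z_m = 0$ and expand each summand as
$$\partial_{z_m}\Bigl(\bigl(\tfrac{\operatorname{Im} z_m}{\operatorname{Im}\tau}\bigr)^{l}\partial_{z_m}^{l-1}F_l\Bigr) = -\tfrac{il}{2\operatorname{Im}\tau}\bigl(\tfrac{\operatorname{Im} z_m}{\operatorname{Im}\tau}\bigr)^{l-1}\partial_{z_m}^{l-1}F_l + \bigl(\tfrac{\operatorname{Im} z_m}{\operatorname{Im}\tau}\bigr)^{l}\partial_{z_m}^{l}F_l.$$
Every summand carries a strictly positive power of $\operatorname{Im} z_m$ except the leading term at $l=1$, which reduces on the A-cycle to $\frac{1}{2\sqrt{-1}\operatorname{Im}\tau}F_1$. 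Together with the $F_0$ contribution this produces $\oint_A dz_m\,F = \oint_A dz_m\bigl(F_0 + \frac{F_1}{2\sqrt{-1}\operatorname{Im}\tau}\bigr)$.

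For the residue sum, the smooth factor $\operatorname{Im} z_m$ is evaluated at $z_k$ under $\operatorname{Res}_{z_m=z_k}$, so each contribution reduces to a residue of a meromorphic coefficient of $F$. Applying the same expansion as above, the $l\geq 2$ summands involve only $\partial_{z_m}^{l-1}F_l$ and $\partial_{z_m}^{l}F_l$, which are exact derivatives of meromorphic functions in $z_m$ and therefore have zero residue at every $z_k$; hence these summands drop out. What survives is the $F_0$ residue together with an $l=1$ contribution. After rewriting $F_0 + \frac{F_1}{2\sqrt{-1}\operatorname{Im}\tau}$ as the $\operatorname{Im} z_m$-independent part of $F$ and recognizing that the meromorphic coefficients of positive $\operatorname{Im} z_m$-powers in $F$ are themselves total derivatives, the surviving pieces collapse to $\sum_{k=1}^{m-1}\operatorname{Res}_{z_m=z_k}\bigl(\frac{\operatorname{Im} z_m}{\operatorname{Im}\tau}F_0\bigr)$.

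The main obstacle is the careful bookkeeping of the $l=1$ residue, where a direct computation initially produces an extra term proportional to $\frac{\operatorname{Im} z_k}{\operatorname{Im}\tau}\operatorname{Res}_{z_m=z_k}F_1$. Reconciling it with the clean right-hand side of the lemma requires invoking the ellipticity of $F$, which links the meromorphic coefficients of the $(\operatorname{Im} z_m/\operatorname{Im}\tau)$-expansion of $F$ and suppresses this contribution, exactly as in Lemma~3.26 of \cite{SiJie2020}. Once this reorganization is performed, assembling the A-cycle piece and the residue piece completes the proof.
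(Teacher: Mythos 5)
There is a genuine gap, and it sits exactly where you flag your own uncertainty. Your whole argument rests on applying \eqref{keyeq} verbatim to $F$, but with the literal reading of its two terms ($\oint_A$ as a contour integral, $\operatorname{Res}$ as the naive residue of the product) that identity is only reliable when the integrand is meromorphic in $z_m$ — which is the only situation in which the paper ever invokes it (cf.\ \eqref{keyeq2}); for a genuinely almost-meromorphic $F$ the correct statement \emph{is} Lemma \ref{RegIntegralF}, and indeed the paper explicitly says that Li--Zhou's formula needs to be modified before it applies here. The concrete failure is in your residue step: for $g$ meromorphic with a pole of order $\geq 2$ at $z_k$ and $h$ a power of $\operatorname{Im}z_m$, one has $\operatorname{Res}_{z_m=z_k}(hg)\neq h(z_k)\operatorname{Res}_{z_m=z_k}(g)$, because $\partial_{z_m}\operatorname{Im}z_m=\tfrac{1}{2\sqrt{-1}}\neq 0$ and the residue picks up higher Laurent coefficients of $g$ through the $z_m$-Taylor expansion of $h$. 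So the $l\geq 2$ summands do \emph{not} drop out merely because $\partial_{z_m}^{l-1}F_l$ is exact, and the leftover terms you hope ellipticity will suppress do not cancel. A concrete test with $n=1$, $m=2$: take $F_1=\wp(z_m-z_1)$ and $F_0=\tfrac{1}{2\pi i}\zeta(z_m-z_1)\wp'(z_m-z_1)$ (with $\zeta$ the modified zeta function, so that $F=F_0+\partial_{z_m}\bigl(\tfrac{\operatorname{Im}z_m}{\operatorname{Im}\tau}F_1\bigr)$ is elliptic and satisfies all hypotheses). Then
$$\operatorname{Res}_{z_m=z_1}\Bigl(\tfrac{\operatorname{Im}z_m}{\operatorname{Im}\tau}\,\partial_{z_m}\bigl(\tfrac{\operatorname{Im}z_m}{\operatorname{Im}\tau}F_1\bigr)\Bigr)=-\Bigl(\tfrac{1}{2\sqrt{-1}\operatorname{Im}\tau}\Bigr)^{2}\neq 0,$$
so your route yields an answer differing from the lemma by a nonzero multiple of $(\operatorname{Im}\tau)^{-2}$; the discrepancy cannot be argued away.

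The paper avoids all of this by not using \eqref{keyeq} at all. It builds the explicit $\bar\partial_{z_m}$-primitive
$$\Psi=\tfrac{\operatorname{Im}z_m}{\operatorname{Im}\tau}F_0+\sum_{l=1}^{n}\partial_{z_m}\Bigl(\tfrac{1}{l+1}\bigl(\tfrac{\operatorname{Im}z_m}{\operatorname{Im}\tau}\bigr)^{l+1}\partial_{z_m}^{l-1}F_l\Bigr),\qquad \partial_{\bar z_m}\Psi=\tfrac{\sqrt{-1}}{2\operatorname{Im}\tau}F,$$
and applies Stokes' theorem for regularized integrals (Theorem \ref{Stokes}). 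The weights $\tfrac{1}{l+1}$ are the whole point: they make every correction term in $\Psi$ an exact $\partial_{z_m}$-derivative, so its residue genuinely vanishes and only $\operatorname{Res}\bigl(\tfrac{\operatorname{Im}z_m}{\operatorname{Im}\tau}F_0\bigr)$ survives, while the ellipticity of $F$ is consumed in evaluating the boundary term $\oint_A\bigl(\Psi(z_m+\tau)-\Psi(z_m)\bigr)$. To salvage your approach you would first have to prove the corrected version of \eqref{keyeq} for almost-meromorphic integrands, which carries exactly these $\tfrac{1}{l+1}$ weights — that is, redo the paper's Stokes argument.
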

\begin{proof}
  We rewrite $F$ as
  $$
  F=F_0+\frac{1}{2\sqrt{-1}\operatorname{Im}\tau}F_1+\sum^{n-1}_{l=1} \left(\frac{\operatorname{Im}z_m}{\operatorname{Im}\tau}\right)^l\left(\partial^l_{z_m}F_{l}+\frac{l+1}{2\sqrt{-1}\operatorname{Im}\tau}\partial^l_{z_m}F_{l+1}\right)+\left(\frac{\operatorname{Im}z_m}{\operatorname{Im}\tau}\right)^n\partial^n_{z_m}F_n.
  $$
  Let
  \begin{align*}
  \Psi=~&\frac{\operatorname{Im}z_m}{\operatorname{Im}\tau}\left(F_0+\frac{1}{2\sqrt{-1}\operatorname{Im}\tau}F_1\right)+\sum^{n-1}_{l=1} \frac{1}{l+1}\left(\frac{\operatorname{Im}z_m}{\operatorname{Im}\tau}\right)^{l+1}\left(\partial^l_{z_m}F_{l}+\frac{l+1}{2\sqrt{-1}\operatorname{Im}\tau}\partial^l_{z_m}F_{l+1}\right)\\
  &+\frac{1}{n+1}\left(\frac{\operatorname{Im}z_m}{\operatorname{Im}\tau}\right)^{n+1}\partial^n_{z_m}F_n,
  \end{align*}
  then
  $$
  \Psi=\frac{\operatorname{Im}z_m}{\operatorname{Im}\tau}F_0+\sum^n_{l=1}\partial_{z_m}\left(\frac{1}{l+1}\left(\frac{\operatorname{Im}z_m}{\operatorname{Im}\tau}\right)^{l+1}\partial^{l-1}_{z_m}F_l\right), \quad \partial_{\bar{z}_m}\Psi=\frac{\sqrt{-1}}{2\operatorname{Im}\tau}F.
  $$
  It follows that 
\begin{itemize}
    \item the relation $F(z_m+1)=F(z_m)$ implies $\Psi(z_m+1)=\Psi(z_m)$;
    \item the relation $F(z_m+\tau)=F(z_m)$ implies
  $$
 \partial_{\bar{z}_m}( \Psi(z_m+\tau)-\Psi(z_m))=\frac{\sqrt{-1}}{2\operatorname{Im}\tau}(F(z+\tau)-F(z))=0.
  $$
  This further implies
  $$
  \Psi(z_m+\tau)-\Psi(z_m)=F_0+\frac{1}{2\sqrt{-1}\operatorname{Im}\tau}F_1+\sum^{n-1}_{l=1} \frac{\partial_{z_m}^lF_{l}+\frac{l+1}{2\sqrt{-1}\operatorname{Im}\tau}\partial_{z_m}^lF_{l+1}}{l+1}+\frac{\partial^n_{z_m}F_n}{n+1}.
  $$
\end{itemize}
  
Now using Theorem \ref{Stokes} (Stokes' Theorem), we have
\begin{align*}
   E_mF=& \dashint_{E_{\tau}}\frac{d^2z_m}{\operatorname{Im}\tau}F=-\dashint_{E_{\tau}}d(\Psi dz_m)\\
   =&\oint_Adz_m(\Psi(z_m+\tau)-\Psi(z_m))-\oint_Bdz_m(\Psi(z_m+1)-\Psi(z_m))+\sum_{1\leq k\leq m-1}\Res_{z_m=z_k}\Psi(z_m)\\
   =&\oint_Adz_m\left(F_0+\frac{1}{2\sqrt{-1}\operatorname{Im}\tau}F_1+\sum^{n-1}_{l=1} \frac{\partial_{z_m}^lF_{l}+\frac{l+1}{2\sqrt{-1}\operatorname{Im}\tau}\partial_{z_m}^lF_{l+1}}{l+1}+\frac{\partial^n_{z_m}F_n}{n+1}\right) \\
    &+\sum_{1\leq k\leq m-1}\operatorname{Res}_{z_m=z_k}\left(\frac{\operatorname{Im}z_m}{\operatorname{Im}\tau}F_0+\sum^n_{l=1}\partial_{z_m}\bigg(\frac{1}{l+1}\bigg(\frac{\operatorname{Im}z_m}{\operatorname{Im}\tau}\bigg)^{l+1}\partial^{l-1}_{z_m}F_l\bigg)\right)\\
   =&\oint_Adz_m\left(F_0+\frac{1}{2\sqrt{-1}\operatorname{Im}\tau}F_1\right)+\sum_{1\leq k\leq m-1}\operatorname{Res}_{z_m=z_k}\frac{\operatorname{Im}z_m}{\operatorname{Im}\tau}F_0. \qedhere
\end{align*}
\end{proof}

Now let us start with the proof of Theorem \ref{ModifiedAcycleElliptic}.

\begin{proof}[Proof of Theorem \ref{ModifiedAcycleElliptic}] We prove it by induction on $|N|$. When $|N|=0$, there is nothing to prove. Now we assume $\widehat{A}_N[a]$ is almost-meromorphic and elliptic for any $M$ and $a\in \mathcal{A}^{\otimes(m+n)},m=|M|.$
\vskip 0.1cm
We decompose $\widehat{A}_{N}[v]$ as the above lemma for $F$,
$$
\widehat{A}_{N}[v]=\displaystyle \sum _{ \substack{I\subset N,\\
\phi:I\rightarrow M'}}D^{\phi}_{I\rightarrow M'}A_{{I}^c} [B^{\phi}_{{I\rightarrow M'}}v]+\displaystyle \sum _{ \substack{I\cup I_1\subset N,I_1\neq\emptyset\\
\phi:I\rightarrow M'}}D_{I_1\rightarrow m}D^{\phi}_{I\rightarrow M'}A_{{(I\cup I_1)}^c} [B_{I_1\rightarrow m}B^{\phi}_{{I\rightarrow M}}v],
$$
and use Lemma \ref{RegIntegralF}, we get
\begin{align*}
    E_m\widehat{A}_N[a] =&\oint_Adz_m\displaystyle \sum _{ \substack{I\subset N,\\
\phi:I\rightarrow M'}}D^{\phi}_{I\rightarrow M'}A_{{I}^c} [B^{\phi}_{{I\rightarrow M'}}a]+\underbrace{\oint_Adz_m\displaystyle \sum _{ \substack{I\cup \{l\}\subset N\\
\phi:I\rightarrow M'}}D^{\phi}_{I\rightarrow M'}A_{{(I\cup \{l\})}^c} [B^{\phi}_{{I\rightarrow M'}}B_{l\rightarrow m}a]}_{\Circled{1}}\\
&+\sum_{k\in M'}\operatorname{Res}_{z_m=z_k}\frac{\operatorname{Im}z_m}{\operatorname{Im}\tau}\displaystyle \sum _{ \substack{I\subset N,\\
\phi:I\rightarrow M'}}D^{\phi}_{I\rightarrow M'}A_{{I}^c} [B^{\phi}_{{I\rightarrow M'}}a].
\end{align*}
To compute the residue term $\operatorname{Res}_{z_m=z_k}$, for each $k\in M'$, we decompose again
\begin{align*}
   & \displaystyle \sum _{ \substack{I\subset N,\\
\phi:I\rightarrow M'}}D^{\phi}_{I\rightarrow M'}A_{{I}^c} [B^{\phi}_{{I\rightarrow M'}}a] \\
=&\underbrace{\displaystyle \sum _{ \substack{I\subset N,\\
\phi:I\rightarrow M'_k}}D^{\phi}_{I\rightarrow M'_k}A_{{I}^c} [B^{\phi}_{{I\rightarrow M'_k}}a]}_{\Circled{2}}+\underbrace{\displaystyle \sum _{ \substack{I\cup I_1\subset N, I_1\neq \emptyset\\
\phi:I\rightarrow M'_k}}D_{I_1\rightarrow k}D^{\phi}_{I\rightarrow M'_k}A_{{I}^c} [B_{I_1\rightarrow k}B^{\phi}_{{I\rightarrow M'_k}}a]}_{\Circled{3}},
\end{align*}

The three terms $\Circled{1}$, $\Circled{2}$, and $\Circled{3}$ are computed by the following lemmas \ref{B=C}, \ref{SingleCollide} and \ref{MultipleCollide}, respectively. Then we get
\begin{align*}
    E_m\widehat{A}_N[a] =&\oint_Adz_m\displaystyle \sum _{ \substack{I\subset N,\\
\phi:I\rightarrow M'}}D^{\phi}_{I\rightarrow M'}A_{{I}^c} [B^{\phi}_{{I\rightarrow M'}}a]+ \underbrace{\frac{1}{2\sqrt{-1}\operatorname{Im}\tau}\displaystyle \sum _{ \substack{I\cup \{l\}\subset N\\
\phi:I\rightarrow M'}}D^{\phi}_{I\rightarrow M'}A_{{I}^c} [B^{\phi}_{{I\rightarrow M'}}C_{m\rightarrow l}a]}_{\Circled{1}}\\
   & +\underbrace{\sum_{k\in M'}\displaystyle \sum _{ \substack{I\subset N,\\
\phi:I\rightarrow M'_k}}D_{1,k}D^{\phi}_{I\rightarrow M'_k}A_{{I}^c} [B_{m\rightarrow k}B^{\phi}_{{I\rightarrow M'_k}}a]+\frac{1}{2\sqrt{-1}\operatorname{Im}\tau}\sum_{k\in M'}\displaystyle \sum _{ \substack{I\subset N,\\
\phi:I\rightarrow M'_k}}D^{\phi}_{I\rightarrow M'_k}A_{{I}^c} [B^{\phi}_{{I\rightarrow M'_k}}C_{m\rightarrow k}a]}_{\Circled{2}}\\
&\left.
\begin{aligned}
&+\sum_{k\in M'}\displaystyle \sum _{ \substack{I\cup I_1\subset N,I_1\neq \emptyset\\
\phi:I\rightarrow M'_k}}D_{I_1\cup \{m\}\rightarrow k}D^{\phi}_{I\rightarrow M'_k}A_{{I}^c} [B_{I_1\cup\{m\}\rightarrow k}B^{\phi}_{{I\rightarrow M'_k}}a]\\
&+\frac{1}{2\sqrt{-1}\operatorname{Im}\tau}\sum_{k\in M'}\displaystyle \sum _{ \substack{I\cup I_1\subset N,I_1\neq \emptyset\\
\phi:I\rightarrow M'_k}}D_{I_1 \rightarrow k}D^{\phi}_{I\rightarrow M'_k}A_{{I}^c} [B_{I_1\rightarrow k}B^{\phi}_{{I\rightarrow M'_k}}C_{m\rightarrow k}a]\\
&+\frac{1}{2\sqrt{-1}\operatorname{Im}\tau}\sum_{k\in M'}\displaystyle \sum _{ \substack{I\cup I_1\subset N, l\in I_1\\
\phi:I\rightarrow M'_k}}D_{I_1 \rightarrow k}D^{\phi}_{I\rightarrow M'_k}A_{{I}^c} [B_{I_1\rightarrow k}B^{\phi}_{{I\rightarrow M'_k}}C_{m\rightarrow l}a]\\
\end{aligned}
\right\}
~\Circled{3}\\
=~&\widehat{A}_{N\cup\{m\}}[a]+\frac{1}{2\sqrt{-1}\operatorname{Im}\tau}\sum_{0\leq j\leq m+n,j\neq m}\widehat{A}_{N}[C_{m\rightarrow j}a].
\end{align*}
The last equality is obtained by combining the $C$-terms and the remaining ones respectively.
Since the surface integral of an elliptic function is still elliptic and preserves the axioms (1)-(5), by the induction hypothesis and the above formula, $\widehat{A}_{N\cup\{m\}}$ is elliptic and obviously almost-meromorphic, satisfies the axioms (1)-(5).
\end{proof}

\begin{lem}\label{B=C}
We have
$$
\oint_Adz_m\displaystyle \sum _{ \substack{I\cup \{l\}\subset N\\
\phi:I\rightarrow M'}}D^{\phi}_{I\rightarrow M'}A_{{(I\cup \{l\})}^c} [B^{\phi}_{{I\rightarrow M'}}B_{l\rightarrow m}a]=\displaystyle \sum _{ \substack{I\cup \{l\}\subset N\\
\phi:I\rightarrow M'}}D^{\phi}_{I\rightarrow M'}A_{{I}^c} [B^{\phi}_{{I\rightarrow M'}}C_{m\rightarrow l}a]
$$
\end{lem}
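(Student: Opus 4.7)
The plan is to prove the identity term-by-term, using Lemma \ref{BCS} as the single analytic input; everything else is commutation of operators acting on disjoint variables/slots. Fix a pair $(I,l)$ appearing in the left-hand sum, i.e.\ $I\subset N$ with $l\in N-I$, together with $\phi\colon I\to M'$.

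First I would move the integral $\oint_A dz_m$ inside past $D^{\phi}_{I\rightarrow M'}$ and $A_{(I\cup\{l\})^c}$. This is legitimate since $D^{\phi}_{I\rightarrow M'}$ only involves the variables $\{z_k : k\in M'\}$ and $A_{(I\cup\{l\})^c}$ only involves $\{z_j : j\in N-I-\{l\}\}$, and $m$ lies in neither set. Next I would check that the algebraic operations $B_{l\rightarrow m}$ and $B^{\phi}_{I\rightarrow M'}$ commute: $B_{l\rightarrow m}$ modifies slots $l$ and $m$, while $B^{\phi}_{I\rightarrow M'}$ modifies slots in $I\cup \phi(I)\subset I\cup M'$, and these sets are disjoint because $l\notin I$ and $m\notin M'$. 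Hence $B^{\phi}_{I\rightarrow M'}B_{l\rightarrow m}a = B_{l\rightarrow m}B^{\phi}_{I\rightarrow M'}a$, and the same disjoint-slot argument applies to $C_{m\rightarrow l}$ in place of $B_{l\rightarrow m}$.

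Now applying Lemma \ref{BCS} with $i=m$, $j=l$ to the element $B^{\phi}_{I\rightarrow M'}a\in \mathcal{A}^{\otimes(m+n)}$ gives
$$
\oint_A dz_m\,[B_{l\rightarrow m}B^{\phi}_{I\rightarrow M'}a]
=\oint_A dz_l\,[C_{m\rightarrow l}B^{\phi}_{I\rightarrow M'}a]
=\oint_A dz_l\,[B^{\phi}_{I\rightarrow M'}C_{m\rightarrow l}a].
$$
Finally, since $l\notin I$, we have $(I\cup\{l\})^c\sqcup\{l\}=I^c$, so the combined integrations $A_{(I\cup\{l\})^c}\cdot \oint_A dz_l$ assemble into $A_{I^c}$. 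Summing over $(I,l)$ turns the left-hand side into exactly the right-hand side.

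There is no real obstacle here: the identity is a bookkeeping consequence of Lemma \ref{BCS}, together with the routine (but essential) check that the various operators commute because they act on disjoint variables/tensor slots. The only thing to be slightly careful about is the reindexing of the sum, i.e.\ verifying that the conditions "$I\cup\{l\}\subset N$ with $l\notin I$" on the left match the conditions on the right after $\oint_A dz_l$ has been absorbed into $A_{I^c}$, which is immediate.
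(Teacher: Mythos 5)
Your proof is correct and follows essentially the same route as the paper's: the identity is reduced to Lemma \ref{BCS} applied with $i=m$, $j=l$, combined with the observation that $\oint_A dz_m$ and $\oint_A dz_l$ commute with $D^{\phi}_{I\rightarrow M'}$, $A_{(I\cup\{l\})^c}$ and $B^{\phi}_{I\rightarrow M'}$ because $l\notin I$ and $m\notin M'$. You spell out the slot-disjointness and the reindexing $(I\cup\{l\})^c\sqcup\{l\}=I^c$ a bit more explicitly than the paper does, but there is no difference in substance.
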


\begin{proof} This lemma follows from Lemma \ref{BCS} and the fact that both $\oint_Adz_m$ and $\oint_Adz_l$ commute with the operations $D^{\phi}_{I\rightarrow M'}$ and $B^{\phi}_{I\rightarrow M'}$, since $l\notin I$ and $m\notin M'$.
\end{proof}

It can be explained by the diagram as follows.

  \begin{figure}[H]
    \centering

\tikzset{every picture/.style={line width=0.75pt}} 


        \caption{Lemma \ref{B=C}}
  \end{figure}

\begin{lem}\label{SingleCollide}
We have
\begin{align*}
   & \operatorname{Res}_{z_m=z_k}\frac{\operatorname{Im}z_m}{\operatorname{Im}\tau}\displaystyle \sum _{ \substack{I\subset N,\\
\phi:I\rightarrow M'_k}}D^{\phi}_{I\rightarrow M'_k}A_{{I}^c} [B^{\phi}_{{I\rightarrow M'_k}}a] \\
   & =\displaystyle \sum _{ \substack{I\subset N,\\
\phi:I\rightarrow M'_k}}D_{1,k}D^{\phi}_{I\rightarrow M'_k}A_{{I}^c} [B_{m\rightarrow k}B^{\phi}_{{I\rightarrow M'_k}}v]+\frac{1}{2\sqrt{-1}\operatorname{Im}\tau}\displaystyle \sum _{ \substack{I\subset N,\\
\phi:I\rightarrow M'_k}}D^{\phi}_{I\rightarrow M'_k}A_{{I}^c} [B^{\phi}_{{I\rightarrow M'_k}}C_{m\rightarrow k}a].
\end{align*}
\end{lem}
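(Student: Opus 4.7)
The plan is to commute the residue past the outer differential–integral operators and then apply Lemma \ref{OPE} once.

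For every pair $(I,\phi)$ with $I\subset N$ and $\phi:I\to M'_k$, the operator $D^{\phi}_{I\to M'_k}$ acts via $\partial_{z_j}$ and multiplication by $\frac{\im z_j}{\im\tau}$ for indices $j\in M'_k$, while $A_{I^c}$ integrates over $z_j$ with $j\in I^c\subset N$. Since $M'_k\cup I^c$ is disjoint from $\{m,k\}$, both operators commute with $\Res_{z_m=z_k}\bigl(\tfrac{\im z_m}{\im\tau}\,\cdot\,\bigr)$: the commutativity with $A$-cycle integrals is \cite{SiJie2023} Lemma A.1 already invoked in the paper, and for $D^{\phi}_{I\to M'_k}$ it is immediate because none of its ingredients touch $z_m$ or $z_k$. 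I would therefore push the sum inside the residue and reduce the problem to computing, for each fixed $(I,\phi)$, the quantity $\Res_{z_m=z_k}\frac{\im z_m}{\im\tau}[B^{\phi}_{I\to M'_k}a]$.

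Because $B^{\phi}_{I\to M'_k}$ modifies only entries at positions in $I\cup M'_k$, the $m$-th and $k$-th tensor factors of $B^{\phi}_{I\to M'_k}a$ are still $a_m$ and $a_k$. Thus Lemma \ref{OPE} applies directly at the pair $(m,k)$ and converts this residue into
\[
\partial_{z_k}\!\Bigl(\tfrac{\im z_k}{\im\tau}[B_{m\to k}B^{\phi}_{I\to M'_k}a]\Bigr)+\frac{1}{2\sqrt{-1}\,\im\tau}[C_{m\to k}B^{\phi}_{I\to M'_k}a].
\]
The first term is $D_{1,k}[B_{m\to k}B^{\phi}_{I\to M'_k}a]$ by definition of $D_{1,k}$. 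In the second term, since $\{m,k\}\cap(I\cup M'_k)=\emptyset$, Lemma \ref{trivialrel} lets me rewrite $C_{m\to k}B^{\phi}_{I\to M'_k}a$ as $B^{\phi}_{I\to M'_k}C_{m\to k}a$.

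Finally, I would pull $D^{\phi}_{I\to M'_k}A_{I^c}$ back outside. Since $k\notin M'_k\cup I^c$, the operator $D_{1,k}$ also commutes past $D^{\phi}_{I\to M'_k}A_{I^c}$, producing the first sum on the right-hand side of the lemma, while the constant factor $\frac{1}{2\sqrt{-1}\im\tau}$ in the second term passes through trivially to yield the second sum. Summing over $(I,\phi)$ completes the proof. I expect the main obstacle to be purely combinatorial bookkeeping — keeping track of which variables each operator depends on and verifying the disjointness conditions that license each commutation — rather than any analytic subtlety; the sole analytic input, Lemma \ref{OPE}, is applied exactly once per term.
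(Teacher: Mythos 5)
Your proposal is correct and follows essentially the same route as the paper: commute $\operatorname{Res}_{z_m=z_k}\frac{\operatorname{Im}z_m}{\operatorname{Im}\tau}$ past $\sum D^{\phi}_{I\rightarrow M'_k}A_{I^c}$ and then apply Lemma \ref{OPE} (Equation \eqref{modfiedBCD}) to each term. You merely spell out two steps the paper leaves implicit — identifying the $\partial_{z_k}\bigl(\frac{\operatorname{Im}z_k}{\operatorname{Im}\tau}\,\cdot\bigr)$ term with $D_{1,k}$ and commuting $C_{m\rightarrow k}$ past $B^{\phi}_{I\rightarrow M'_k}$ via Lemma \ref{trivialrel} — both of which are valid.
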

\begin{proof}
Note that the operation $\operatorname{Res}_{z_m=z_k}\frac{\operatorname{Im}z_m}{\operatorname{Im}\tau}$ commutes with the operation $\displaystyle \sum _{ \substack{I\subset N,\\
\phi:I\rightarrow M'_k}}D^{\phi}_{I\rightarrow M'_k}A_{{I}^c} $, so we only need to compute
$$
 \operatorname{Res}_{z_m=z_k}\frac{\operatorname{Im}z_m}{\operatorname{Im}\tau}[B^{\phi}_{{I\rightarrow M'_k}}a], \quad \text{for } I\subset N, \phi:I\rightarrow M'_k.
$$
It is done by Equation \eqref{modfiedBCD}.
\end{proof}

Similarly, it can be explained by the diagram as follows.
  \begin{figure}[H]
    \centering

\tikzset{every picture/.style={line width=0.75pt}} 


    \caption{Lemma \ref{SingleCollide}}
  \end{figure}

\begin{lem}\label{MultipleCollide}
  We have\begin{align*}
&\operatorname{Res}_{z_m=z_k}\frac{\operatorname{Im}z_m}{\operatorname{Im}\tau}\displaystyle \sum _{ \substack{I\cup I_1\subset N,\\
\phi:I\rightarrow M'_k}}D_{I_1\rightarrow k}D^{\phi}_{I\rightarrow M'_k}A_{{I}^c} [B_{I_1\rightarrow k}B^{\phi}_{{I\rightarrow M'_k}}a] \\
=~ & \displaystyle \sum _{ \substack{I\cup I_1\subset N,\\
\phi:I\rightarrow M'_k}}D_{I_1\cup \{m\}\rightarrow k}D^{\phi}_{I\rightarrow M'_k}A_{{I}^c} [B_{I_1\cup\{m\}\rightarrow k}B^{\phi}_{{I\rightarrow M'_k}}a]\\
&+\frac{1}{2\sqrt{-1}\operatorname{Im}\tau}\displaystyle \sum _{ \substack{I\cup I_1\subset N,\\
\phi:I\rightarrow M'_k}}D_{I_1 \rightarrow k}D^{\phi}_{I\rightarrow M'_k}A_{{I}^c} [B_{I_1\rightarrow k}B^{\phi}_{{I\rightarrow M'_k}}C_{m\rightarrow k}a]\\
&+\frac{1}{2\sqrt{-1}\operatorname{Im}\tau}\displaystyle \sum _{ \substack{I\cup I_1\subset N, l\in I_1\\
\phi:I\rightarrow M'_k}}D_{I_1 \rightarrow k}D^{\phi}_{I\rightarrow M'_k}A_{{I}^c} [B_{I_1\rightarrow k}B^{\phi}_{{I\rightarrow M'_k}}C_{m\rightarrow l}a].
\end{align*}

\end{lem}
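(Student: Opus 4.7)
The plan is to exploit that the operators $D^{\phi}_{I\rightarrow M'_k}$ and $A_{I^c}$ involve only variables $z_j$ with $j\in M'_k\subset M-\{k,m\}$ and $j\in I^c\subset N$ respectively, so both commute with $\operatorname{Res}_{z_m=z_k}\frac{\operatorname{Im}z_m}{\operatorname{Im}\tau}$. Pulling this residue inside the double sum reduces the identity, $(I,I_1,\phi)$-term by term, to the pointwise claim
\[
\operatorname{Res}_{z_m=z_k}\tfrac{\operatorname{Im}z_m}{\operatorname{Im}\tau}\,D_{I_1\rightarrow k}[B_{I_1\rightarrow k}G]=D_{I_1\cup\{m\}\rightarrow k}[B_{I_1\cup\{m\}\rightarrow k}G]+\tfrac{1}{2\sqrt{-1}\operatorname{Im}\tau}D_{I_1\rightarrow k}\bigl[B_{I_1\rightarrow k}C_{m\rightarrow k}G+{\textstyle\sum_{l\in I_1}}B_{I_1\rightarrow k}C_{m\rightarrow l}G\bigr],
\]
with $G:=B^{\phi}_{I\rightarrow M'_k}a$; note that $C_{m\rightarrow k}$ and $C_{m\rightarrow l}$ commute past $B^{\phi}_{I\rightarrow M'_k}$ because their source/target slots lie outside $I\cup M'_k$, reconciling this form with the lemma's right-hand side.

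To establish the pointwise identity I refine the derivation of \eqref{modfiedBCD}. Writing $\operatorname{Im}z_m=\operatorname{Im}z_k+\operatorname{Im}(z_m-z_k)$ and treating $\bar z_m=\bar z_k$ after the holomorphic residue yields
\[
\operatorname{Res}_{z_m=z_k}\tfrac{\operatorname{Im}z_m}{\operatorname{Im}\tau}f=\tfrac{\operatorname{Im}z_k}{\operatorname{Im}\tau}\operatorname{Res}_{z_m=z_k}f+\tfrac{1}{2\sqrt{-1}\operatorname{Im}\tau}\operatorname{Res}_{z_m=z_k}(z_m-z_k)f.
\]
Setting $r=|I_1|$ and expanding $D_{r,k}=\tfrac{r}{2\sqrt{-1}\operatorname{Im}\tau}(\tfrac{\operatorname{Im}z_k}{\operatorname{Im}\tau})^{r-1}\partial_{z_k}^{r-1}+(\tfrac{\operatorname{Im}z_k}{\operatorname{Im}\tau})^{r}\partial_{z_k}^{r}$, I apply the above formula to $\partial_{z_k}^{s}[B_{I_1\rightarrow k}G]$ for $s=r-1,r$, using the commutations $\operatorname{Res}_{z_m=z_k}\partial_{z_k}^{s}f=\partial_{z_k}^{s}\operatorname{Res}_{z_m=z_k}f$ and $\operatorname{Res}_{z_m=z_k}(z_m-z_k)\partial_{z_k}^{s}f=\partial_{z_k}^{s}\operatorname{Res}_{z_m=z_k}(z_m-z_k)f+s\,\partial_{z_k}^{s-1}\operatorname{Res}_{z_m=z_k}f$, both following from $f=\sum_{j}c_j(z_k)(z_m-z_k)^{j}$ via $\partial_{z_k}(z_m-z_k)^{j}=-j(z_m-z_k)^{j-1}$. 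Axioms $(4)$ and $(5)$ of the genus one conformal block together with $(a_m)_{(1)}\tilde{a}_k=B(a_m,\tilde{a}_k)+C(a_m,\tilde{a}_k)$ from Definition \ref{defBC} then give
\[
\operatorname{Res}_{z_m=z_k}[B_{I_1\rightarrow k}G]=\partial_{z_k}[B_{I_1\cup\{m\}\rightarrow k}G],\qquad\operatorname{Res}_{z_m=z_k}(z_m-z_k)[B_{I_1\rightarrow k}G]=[B_{I_1\cup\{m\}\rightarrow k}G]+[C_{m\rightarrow k}B_{I_1\rightarrow k}G].
\]

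Assembling these contributions and grouping by the three powers $(\tfrac{\operatorname{Im}z_k}{\operatorname{Im}\tau})^{r-1},(\tfrac{\operatorname{Im}z_k}{\operatorname{Im}\tau})^{r},(\tfrac{\operatorname{Im}z_k}{\operatorname{Im}\tau})^{r+1}$ that appear, I obtain the intermediate formula
\[
\operatorname{Res}_{z_m=z_k}\tfrac{\operatorname{Im}z_m}{\operatorname{Im}\tau}D_{r,k}[B_{I_1\rightarrow k}G]=D_{r+1,k}X+\tfrac{r}{2\sqrt{-1}\operatorname{Im}\tau}D_{r,k}X+\tfrac{1}{2\sqrt{-1}\operatorname{Im}\tau}D_{r,k}[C_{m\rightarrow k}B_{I_1\rightarrow k}G],
\]
with $X=[B_{I_1\cup\{m\}\rightarrow k}G]$. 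The main obstacle and final step is the surplus middle term $\tfrac{r}{2\sqrt{-1}\operatorname{Im}\tau}D_{r,k}X$, which has no counterpart on the claimed right-hand side. To eliminate it, I invoke Proposition \ref{Useful} in the form
\[
C_{m\rightarrow k}B_{I_1\rightarrow k}=B_{I_1\rightarrow k}C_{m\rightarrow k}-r\,B_{I_1\cup\{m\}\rightarrow k}+{\textstyle\sum_{l\in I_1}}B_{I_1\rightarrow k}C_{m\rightarrow l}.
\]
The term $-\tfrac{r}{2\sqrt{-1}\operatorname{Im}\tau}D_{r,k}[B_{I_1\cup\{m\}\rightarrow k}G]$ produced by this expansion cancels the surplus middle term exactly, leaving precisely the three summands claimed. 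Resumming over $(I,I_1,\phi)$ reconstructs both sides and finishes the proof.
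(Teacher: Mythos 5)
Your proposal is correct and follows essentially the same route as the paper's proof: the same reduction to a single $(I,I_1,\phi)$-term by commuting the residue past the spectator operators, the same decomposition of $\operatorname{Res}_{z_m=z_k}\frac{\operatorname{Im}z_m}{\operatorname{Im}\tau}$ into $B$- and $C$-contributions, the same commutator of $\partial_{z_k}^{|I_1|}$ with $\frac{\operatorname{Im}z_k}{\operatorname{Im}\tau}$ producing the surplus term $\frac{|I_1|}{2\sqrt{-1}\operatorname{Im}\tau}D_{|I_1|,k}[B_{I_1\cup\{m\}\rightarrow k}\cdots]$, and the same application of Proposition \ref{Useful} to absorb it. The only difference is bookkeeping: you expand $D_{|I_1|,k}$ and justify the residue--derivative interchanges explicitly, whereas the paper commutes $\operatorname{Res}_{z_m=z_k}\frac{\operatorname{Im}z_m}{\operatorname{Im}\tau}$ past $D_{I_1\rightarrow k}$ in a single unjustified step and then invokes Equation \eqref{modfiedBCD}.
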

\begin{proof}
For any $I\cup I_1\subset N,\phi:I\rightarrow M'_k$, the operation $\operatorname{Res}_{z_m=z_k}\frac{\operatorname{Im}z_m}{\operatorname{Im}\tau}D_{I_1\rightarrow k}$ commute with $D^{\phi}_{I\rightarrow M'_k}A_{{I}^c}$. Thus we only need to consider
$$
   \operatorname{Res}_{z_m=z_k}\frac{\operatorname{Im}z_m}{\operatorname{Im}\tau}D_{I_1\rightarrow k} [B_{I_1\rightarrow k}B^{\phi}_{{I\rightarrow M'_k}}a].
$$
By a direct computation, we have
\begin{align*}
  &\operatorname{Res}_{z_m=z_k}\frac{\operatorname{Im}z_m}{\operatorname{Im}\tau} D_{I_1\rightarrow k}[B_{I_1\rightarrow k}B^{\phi}_{{I\rightarrow M'_k}}a]\\
=~& D_{I_1\rightarrow k}\operatorname{Res}_{z_m=z_k}\frac{\operatorname{Im}z_m}{\operatorname{Im}\tau}[B_{I_1\rightarrow k}B^{\phi}_{{I\rightarrow M'_k}}a]\\
\overset{(1)}=~& D_{I_1\rightarrow k}\p_{z_k}\left(\frac{\operatorname{Im}z_k}{\operatorname{Im}\tau}[B_{m\rightarrow k}B_{I_1\rightarrow k}B^{\phi}_{{I\rightarrow M'_k}}a]\right)+\frac{1}{2\sqrt{-1}\operatorname{Im}\tau}D_{I_1\rightarrow k}[C_{m\rightarrow k}B_{I_1\rightarrow k}B^{\phi}_{{I\rightarrow M'_k}}a]\\
\overset{(2)}=~&D_{I_1\cup \{m\}\rightarrow k} [B_{I_1\cup\{m\}\rightarrow k}B^{\phi}_{{I\rightarrow M'_k}}a]+\frac{1}{2\sqrt{-1}\operatorname{Im}\tau}D_{I_1\rightarrow k}|I_1|[B_{m\rightarrow k}B_{I_1\rightarrow k}B^{\phi}_{{I\rightarrow M'_k}}a]\\
&+\frac{1}{2\sqrt{-1}\operatorname{Im}\tau}D_{I_1\rightarrow k}[C_{m\rightarrow k}B_{I_1\rightarrow k}B^{\phi}_{{I\rightarrow M'_k}}a]\\
\overset{(3)}=~&D_{I_1\cup \{m\}\rightarrow k} [B_{I_1\cup\{m\}\rightarrow k}B^{\phi}_{{I\rightarrow M'_k}}a]+\frac{1}{2\sqrt{-1}\operatorname{Im}\tau}D_{I_1\rightarrow k} [B_{I_1\rightarrow k}B^{\phi}_{{I\rightarrow M'_k}}C_{m\rightarrow k}a]\\
&+\frac{1}{2\sqrt{-1}\operatorname{Im}\tau}\sum_{l\in I_1}D_{I_1\rightarrow k} [B_{I_1\rightarrow k}B^{\phi}_{{I\rightarrow M'_k}}C_{m\rightarrow l}a],
\end{align*}
where (1) holds by Equation \eqref{modfiedBCD}; (2) holds by definition of $D_{I_1\rightarrow k}$ and
$$\left[\p_{z_k}^{|I_1|},\frac{\operatorname{Im}z_k}{\operatorname{Im}\tau}\right]=\frac{|I_1|}{2\sqrt{-1}\operatorname{Im}\tau}\p_{z_k}^{|I_1|-1};$$
and (3) holds by Lemma \ref{Useful}.
\end{proof}

Similarly, it can be explained by the diagram as follows.
\begin{figure}[H]
  \centering

\tikzset{every picture/.style={line width=0.75pt}} 



  \caption{Lemma \ref{MultipleCollide}}
\end{figure}

Now we turn to prove Theorem \ref{Contactequation}.
\begin{lem}\label{VanishTotal}
  We have
  $$
  E_M\widehat{A}_N(\partial_{z_k}[a])=0, \quad\forall~ k\in M, ~a\in\mathcal{A}^{\otimes(m+n)}.
  $$
\end{lem}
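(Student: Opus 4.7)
The plan is to proceed by induction on $|N|$. For the base case $|N|=0$ we need $E_M\partial_{z_k}[a]=0$ for $k\in M$, which I would prove by a secondary induction on $|M|$. When $|M|=1$, the function $[a]\in\mathcal{F}_1$ has no allowed pole locations (the diagonal set $\{z_i=z_j+s+t\tau : i\neq j\}$ is empty for $n=1$), so it is holomorphic and elliptic in the single variable $z_k$, hence constant; then $\partial_{z_k}[a]=0$ trivially. For $|M|\geq 2$, apply Fubini to evaluate the $z_k$-integral first: using the decomposition \eqref{keyeq} together with $\frac{\operatorname{Im} z_k}{\operatorname{Im}\tau}\partial_{z_k}[a]=\partial_{z_k}\!\left(\frac{\operatorname{Im} z_k}{\operatorname{Im}\tau}[a]\right)-\frac{1}{2\sqrt{-1}\operatorname{Im}\tau}[a]$ (whose first term has vanishing residues and whose $\oint_A$ piece vanishes by ellipticity), one obtains
\begin{equation*}
E_k\partial_{z_k}[a] \;=\; -\frac{1}{2\sqrt{-1}\operatorname{Im}\tau}\sum_{j\in M-\{k\}}\operatorname{Res}_{z_k=z_j}[a] \;=\; -\frac{1}{2\sqrt{-1}\operatorname{Im}\tau}\sum_{j\in M-\{k\}}\partial_{z_j}[B_{k\to j}a],
\end{equation*}
where the last equality uses axiom (5) and the definition of $B$. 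Applying $E_{M-\{k\}}$ and invoking the secondary hypothesis on each summand closes the base case.

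For the inductive step, let $|N'|=|N|+1$ and pick any $l\in N'$. Writing $\partial_{z_k}[a]=[T_k a]$ via axiom (4) and applying Theorem \ref{ModifiedAcycleElliptic} with $l$ in place of $m$ and $N'\setminus\{l\}$ in place of $N$ yields
\begin{equation*}
\widehat{A}_{N'}[T_k a] \;=\; E_l\,\widehat{A}_{N'\setminus\{l\}}[T_k a] \;-\; \frac{1}{2\sqrt{-1}\operatorname{Im}\tau}\sum_{j\neq l}\widehat{A}_{N'\setminus\{l\}}[C_{l\to j}(T_k a)].
\end{equation*}
Applying $E_M$, the first term becomes $E_{M\cup\{l\}}\widehat{A}_{N'\setminus\{l\}}[T_k a]$, which vanishes by the inductive hypothesis since $|N'\setminus\{l\}|=|N|$ and $k\in M\subset M\cup\{l\}$. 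In the second sum, whenever $j\neq k$ the operation $C_{l\to j}$ only modifies positions $l$ and $j$, both different from $k$, so $C_{l\to j}(T_k a)=T_k(C_{l\to j}a)=\partial_{z_k}$-image of $[C_{l\to j}a]$, and the induction hypothesis disposes of these summands.

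The main obstacle is the $j=k$ case, where $C_{l\to k}$ does not commute with $T_k$. The key identity to establish is
\begin{equation*}
C(a_l, T a_k)\;=\; T\bigl((a_l)_{(1)}a_k\bigr),
\end{equation*}
obtained by combining Proposition \ref{immediateconseq} (2), which gives $(a_l)_{(1)}(Ta_k)=T((a_l)_{(1)}a_k)+(a_l)_{(0)}a_k$ and $(a_l)_{(0)}(Ta_k)=T((a_l)_{(0)}a_k)$, with Dijkgraaf's condition $(a_l)_{(0)}a_k\in T\mathcal{A}$ and Lemma \ref{T-Inverse}, which together ensure $T^{-1}T((a_l)_{(0)}a_k)=(a_l)_{(0)}a_k$, so that the two spurious $(a_l)_{(0)}a_k$ contributions cancel exactly. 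It follows that $C_{l\to k}(T_k a)=T_k\tilde a$, where $\tilde a$ places $|0\rangle$ at position $l$ and $(a_l)_{(1)}a_k$ at position $k$; the inductive hypothesis applied to $\tilde a$ then makes this remaining summand vanish, completing the induction.
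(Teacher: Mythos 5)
Your proof is correct and follows essentially the same route as the paper: induction on $|N|$ driven by Theorem \ref{ModifiedAcycleElliptic}, with the $j\neq k$ terms handled by commuting $C_{l\rightarrow j}$ past $T_k$ and the $j=k$ term by showing $C(a_l,Ta_k)\in\operatorname{Im}(T)$ (the paper gets this from \eqref{BC2} together with $(Ta_k)_{(0)}=0$, while you derive the sharper identity $C(a_l,Ta_k)=T\bigl((a_l)_{(1)}a_k\bigr)$ --- both suffice). You are also more explicit on the base case: the paper's one-line claim $E_{M-\{k\}}E_k\partial_{z_k}[a]=0$ in fact conceals exactly the residue computation $E_k\partial_{z_k}[a]=-\tfrac{1}{2\sqrt{-1}\operatorname{Im}\tau}\sum_{j\neq k}\partial_{z_j}[B_{k\rightarrow j}a]$ and the secondary induction on $|M|$ that you spell out.
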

\begin{proof}
We prove this by induction on $|N|$. If $|N|=0,$ then
$$
E_M\partial_{z_k}[a]=E_{M-\{k\}}E_k\partial_{z_k}[a]=0.
$$
Now suppose $|N|>0$, we pick $l\in N$ and use Proposition \ref{ModifiedAcycleElliptic}
\begin{align*}
  E_M\widehat{A}_N\partial_{z_k}[a] &=E_M(E_l\widehat{A}_{N-\{l\}}[T_ka]-\frac{1}{2\sqrt{-1}\operatorname{Im}\tau}\sum_{i\neq l}\widehat{A}_{N-\{l\}}[C_{l\rightarrow i}T_ka]) .
\end{align*}
Now note that
\begin{itemize}
  \item For $i\neq k$, $[C_{l\rightarrow i}, T_k]=0$;
  \item For $i=k$, the $k$-th element of $C_{l\rightarrow k}T_ka$ is
  \begin{align*}
  C(a_l,Ta_k)&=T^{-1}((Ta_k)_{(0)}a_l)+Tb\quad\text{for some }b\in\mathcal{A}\\
           &=Tb.  \quad (\text{Proposition }\ref{immediateconseq}, (Ta_k)_{(0)}=0)
  \end{align*}
\end{itemize}
By the induction hypothesis
$$
E_M\frac{1}{2\sqrt{-1}\operatorname{Im}\tau}\sum_{i\neq l}\widehat{A}_{N-\{l\}}[C_{l\rightarrow i}T_ka]=0,
$$
$$
E_ME_l\widehat{A}_{N-\{l\}}[T_ka]=0,
$$
combining the above identities, we obtain the lemma.
\end{proof}
\begin{proof}[Proof of Theorem \ref{Contactequation}] It follows from the above lemma and $C(v_i,v_j)-\sum_{k\in \mathbb{N}_+}c^k_{ij}v_k\in \operatorname{Im}(T)$.
\end{proof}

\begin{appendices}

\section{Chiral boson}\label{proofnormal}

In Example \ref{Walg}, we show that the $W_{\infty}$ algebra as a subalgebra of the free boson vertex algebra satisfies Dijkgraaf's integrable condition. To define various types of genus one partition function using two-step constructions, we need to construct a genus one conformal block first. For the chiral boson, we use normal ordered Feynman graphs (Subsection \ref{normalorderedFG}) to define the desired functions (Definition \ref{defnnormalorderedFG} and Proposition \ref{normalcorrelation}). In this section, we will give the detailed proof of Proposition \ref{normalcorrelation}.

\subsection{Proof of Axiom (1)-(5)}
\begin{proof}[Proof of Axiom (1)-(5)]
The axioms (1) and (2), i.e., the linearity for each $a_i\in V$ and the invariance of permutation, are obvious by definition.

(3)~$S((|0\rangle,z),(a_1,z_1),\ldots,(a_n,z_n),\tau)=S((a_1,z_1),\ldots,(a_n,z_n),\tau)$:

It follows from $\L(z):=Y(|0\rangle,z)=\Id$, which corresponds to an isolated vertex without tails and self-loops in the graph. So, the input of $\L(z)$ does not change the Feynmann graph function $\Phi_{\Gamma}$.

(4)~$S((L_{-1}a_1,z_1),\ldots,(a_n,z_n),\tau)=\frac{d}{dz_1}S((a_1,z_1),\ldots,(a_n,z_n),\tau)$:
\begin{align*}
&S((L_{-1}a_1,z_1),\ldots,(a_n,z_n),\tau)\\
=~&\eta(\tau)^{-1}\langle :Y(L_{-1}a_1, z_1):,:Y(a_2,z_2):,\cdots,:Y(a_n,z_n):;\tau\rangle\\
=~&\eta(\tau)^{-1}\langle :\frac{d}{dz_1}Y(a_1, z_1):,:Y(a_2,z_2):,\cdots,:Y(a_n,z_n):;\tau\rangle\\
\overset{*}=~&\eta(\tau)^{-1}\frac{d}{dz_1}\langle :Y(a_1, z_1):,:Y(a_2,z_2):,\cdots,:Y(a_n,z_n):;\tau\rangle\\
=~&\frac{d}{dz_1}S((a_1,z_1),\ldots,(a_n,z_n),\tau).
\end{align*}
Here the equality $*$ follows from the fact that $\frac{d}{dz}$ commutes with the two types of propagators, more explicitly,
  \begin{align*} &\frac{d}{dz}\left(\sum_{k,l\geq0}\partial_{z}^k\partial_{w}^lP(z,w)\frac{\p}{\p(\p^{k+1}\phi(z))}\frac{\p}{\p(\p^{l+1}\phi(w))}\p^i\phi(z)\p^j\phi(w)\right)\\
     =~& \left(\sum_{k,l\geq0}\partial_{z}^k\partial_{w}^lP(z,w)\frac{\p}{\p(\p^{k+1}\phi(z))}\frac{\p}{\p(\p^{l+1}\phi(w))}\right)\p^{i+1}\phi(z)\p^j\phi(w).\\
\text{and }~&\\
  &\frac{d}{dz}\left(\sum_{k,l\geq0}\partial_{z}^k\partial_{w}^lQ(z,w)\bigg|_{w=z}\frac{\p}{\p(\p^{k+1}\phi(z))}\frac{\p}{\p(\p^{l+1}\phi(z))}\p^i\phi(z)\p^j\phi(z)\right)=0\\
  &\left(\sum_{k,l\geq0}\partial_{z}^k\partial_{w}^lQ(z,w)\bigg|_{w=z}\frac{\p}{\p(\p^{k+1}\phi(z))}\frac{\p}{\p(\p^{l+1}\phi(z))}\right)\bigg(\p^{i+1}\phi(z)\p^j\phi(z)+\p^i\phi(z)\p^{j+1}\phi(z)\bigg)=0.
  \end{align*}

The axiom (5) is given by the following lemma \ref{graphOPE}. \qedhere

\end{proof}

\begin{lem}\label{graphOPE} When $z_1$ is close to $z_2$, we have
\begin{equation}\label{grOPE}
\langle :\L_1(z_1):,:\L_2(z_2):,\cdots,:\L_n(z_n):\rangle=\bigg\langle:\sum_{n\in\Z}\frac{({\L_1}_{(n)}\cdot\L_2)(z_2)}{(z_1-z_2)^{n+1}}:,\cdots,:\L_n(z_n):\bigg\rangle.
\end{equation}
Where at the right hand side, $\sum_{n\in\Z}\frac{({\L_1}_{(n)}\L_2)(z_2)}{(z_1-z_2)^{n+1}}$ is the OPE of $\L_1(z_1)\L_2(z_2)$.
\end{lem}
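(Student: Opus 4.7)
The plan is to establish \eqref{grOPE} by reorganizing the Feynman graph sum on the left-hand side according to the edges between vertex $1$ and vertex $2$, and identifying the resulting combinatorics with the Wick expansion that defines the free-boson OPE.

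First, I would partition the edge set of each graph $\Gamma$ contributing to the LHS as $E(\Gamma) = E_{12}(\Gamma) \sqcup E'(\Gamma)$, where $E_{12}$ consists of the edges joining vertex $1$ to vertex $2$. On each edge in $E_{12}$, expand the propagator $P(z_1,z_2) = R(z_1,z_2) + Q(z_1,z_2)$, with $R(z_1,z_2)=1/(z_1-z_2)^2$ carrying the entire pole at $z_1=z_2$. This turns the Feynman sum into a triple sum: (i) over the outer graph structure $E'$ (edges not between vertices $1$ and $2$, plus all self-loops), (ii) over the Wick pairing between the fields in $\L_1$ and the fields in $\L_2$, and (iii) for each such pair, a binary choice of $R$ or $Q$ factor.

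Next, I would recognize the sum over inner Wick pairings with $R$-contractions as the standard free-boson Wick expansion of $\L_1(z_1)\L_2(z_2)$ on $\C$. By the planar free-field OPE, this reproduces the singular part $\sum_{n\geq 0} ({\L_1}_{(n)}\L_2)(z_2)/(z_1-z_2)^{n+1}$ after Taylor expanding the uncontracted factors $:\p^{k}\phi(z_1):$ around $z_2$. The $Q$-contractions between vertex $1$ and vertex $2$ satisfy $Q(z_1,z_2)\to Q(z_2,z_2)$ as $z_1\to z_2$, and are precisely the self-contractions at the merged vertex; together with the Taylor expansion of $Q(z_1,z_2)$ in $(z_1-z_2)$ and of the uncontracted fields, they produce the regular part $\sum_{n<0} ({\L_1}_{(n)}\L_2)(z_2)/(z_1-z_2)^{n+1}$, where the coefficients $({\L_1}_{(-k-1)}\L_2)(z_2)=\tfrac{1}{k!}:T^{k}\L_1\cdot \L_2:(z_2)$ match the $k$-th Taylor coefficients. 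A simple sanity check using $\L_1=\L_2=\partial\phi$ (with $Q_{2m+1}=0$) confirms the matching of singular and regular terms against $P(z_1,z_2)=R+Q$.

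Finally, the outer sum over graph structures on the reduced vertex set $\{12,3,\ldots,n\}$, now with the composite field $\sum_n({\L_1}_{(n)}\L_2)(z_2)/(z_1-z_2)^{n+1}$ placed at the merged vertex, is by definition the Feynman correlator appearing on the right-hand side of \eqref{grOPE}; linearity of $\langle-\rangle$ in its first argument lets one commute the sum over $n$ with the Feynman expansion. The main technical obstacle will be the combinatorial bookkeeping of the automorphism factors $|\Aut(\Gamma)|$: merging vertices $1$ and $2$ can create extra symmetries from permutations of parallel $R$- and $Q$-edges, and one must check that these combine correctly with the multinomial coefficients produced by Wick's theorem. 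The standard diagrammatic identity---that a labelled sum over Wick contractions equals an unlabelled sum over graphs weighted by $1/|\Aut(\Gamma)|$---together with the observation that normal-ordered inputs prohibit within-vertex $R$-contractions, will supply the necessary bookkeeping to complete the identification.
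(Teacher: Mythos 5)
Your proposal is correct and follows essentially the same route as the paper's proof: decompose $P=R+Q$ on the edges joining vertices $1$ and $2$, identify the $R$-contractions with the singular part of the free-boson OPE via the Wick exponential formula, and Taylor-expand the remaining $Q(z_1,z_2)$, $Q(z_1,z_1)$ and $P(z_1,z_l)$ contributions around $z_2$ to match the graphs built on the merged vertex (the paper's three cases). Your extra attention to the $1/|\Aut(\Gamma)|$ bookkeeping is a reasonable addition, but it does not change the argument.
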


\begin{proof} To prove the equality, we only need to focus on the $z_1$-, $z_2$-local parts of all the Feynman graphs, such as Figure 2.
\begin{figure}[H]
	\centering
\begingroup%
  \makeatletter%
  \providecommand\color[2][]{%
    \errmessage{(Inkscape) Color is used for the text in Inkscape, but the package 'color.sty' is not loaded}%
    \renewcommand\color[2][]{}%
  }%
  \providecommand\transparent[1]{%
    \errmessage{(Inkscape) Transparency is used (non-zero) for the text in Inkscape, but the package 'transparent.sty' is not loaded}%
    \renewcommand\transparent[1]{}%
  }%
  \providecommand\rotatebox[2]{#2}%
  \newcommand*\fsize{\dimexpr\f@size pt\relax}%
  \newcommand*\lineheight[1]{\fontsize{\fsize}{#1\fsize}\selectfont}%
  \ifx\svgwidth\undefined%
    \setlength{\unitlength}{232.94229126bp}%
    \ifx\svgscale\undefined%
      \relax%
    \else%
      \setlength{\unitlength}{\unitlength * \real{\svgscale}}%
    \fi%
  \else%
    \setlength{\unitlength}{\svgwidth}%
  \fi%
  \global\let\svgwidth\undefined%
  \global\let\svgscale\undefined%
  \makeatother%
  \begin{picture}(1,0.42965781)%
    \lineheight{1}%
    \setlength\tabcolsep{0pt}%
    \put(0,0){\includegraphics[width=\unitlength,page=1]{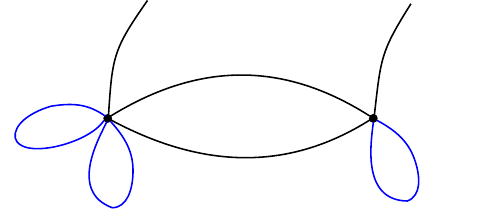}}%
    \put(0.41891471,0.30563626){\makebox(0,0)[lt]{\lineheight{1.25}\smash{\begin{tabular}[t]{l}$P(z_1-z_2)$\end{tabular}}}}%
    \put(0.41973626,0.03802643){\makebox(0,0)[lt]{\lineheight{1.25}\smash{\begin{tabular}[t]{l}$P(z_1-z_2)$\end{tabular}}}}%
    \put(-0.00129961,0.23658125){\makebox(0,0)[lt]{\lineheight{1.25}\smash{\begin{tabular}[t]{l}$Q(z_1,z_1)$\end{tabular}}}}%
    \put(0.00334314,0.00593514){\makebox(0,0)[lt]{\lineheight{1.25}\smash{\begin{tabular}[t]{l}$Q(z_1,z_1)$\end{tabular}}}}%
    \put(0.88228888,0.07496672){\makebox(0,0)[lt]{\lineheight{1.25}\smash{\begin{tabular}[t]{l}$Q(z_2,z_2)$\end{tabular}}}}%
    \put(0.21209995,0.10937012){\makebox(0,0)[lt]{\lineheight{1.25}\smash{\begin{tabular}[t]{l}$z_1$\end{tabular}}}}%
    \put(0.78061018,0.12287929){\makebox(0,0)[lt]{\lineheight{1.25}\smash{\begin{tabular}[t]{l}$z_2$\end{tabular}}}}%
    \put(0.48022013,0.1744559){\makebox(0,0)[lt]{\lineheight{1.25}\smash{\begin{tabular}[t]{l}$\vdots$\end{tabular}}}}%
  \end{picture}%
\endgroup%

	\caption[]{The local part of a graph.}
	\label{Fig:Q1}
\end{figure}

Note that the singular part of the OPE comes from the singular part of the propagator. More explicitly, we have
$$\L_1(z_1)\L_2(z_2)=\exp\bigg(\sum_{k,l\geq0}\partial_{z_1}^k\partial_{z_2}^lR(z_1,z_2)\frac{\p}{\p(\p^{k+1}\phi(z_1))}\frac{\p}{\p(\p^{l+1}\phi(z_2))}\bigg):\L_1(z_1)\L_2(z_2):,$$
then we do the Taylor expansion at $z_2$ and get the OPE.

To prove \eqref{grOPE}, it suffices to compare the contributions of $Q(z_1,z_1)$, $Q(z_1,z_2)$, $Q(z_2,z_2)$, $P(z_1,z_l)$, $P(z_2,z_l)$ on the left hand side and those of $Q(z_2,z_2)$, $P(z_2,z_l)$ on the right hand side of \eqref{grOPE}. In fact, we only need to consider the following three cases.

The first case is given by the edge connecting $\p^i\phi(z_1)$ to $\p^k\phi(z_2)$:
\vskip 0.1cm
\begin{figure}[H]
	\centering
\begingroup%
  \makeatletter%
  \providecommand\color[2][]{%
    \errmessage{(Inkscape) Color is used for the text in Inkscape, but the package 'color.sty' is not loaded}%
    \renewcommand\color[2][]{}%
  }%
  \providecommand\transparent[1]{%
    \errmessage{(Inkscape) Transparency is used (non-zero) for the text in Inkscape, but the package 'transparent.sty' is not loaded}%
    \renewcommand\transparent[1]{}%
  }%
  \providecommand\rotatebox[2]{#2}%
  \newcommand*\fsize{\dimexpr\f@size pt\relax}%
  \newcommand*\lineheight[1]{\fontsize{\fsize}{#1\fsize}\selectfont}%
  \ifx\svgwidth\undefined%
    \setlength{\unitlength}{85.82607969bp}%
    \ifx\svgscale\undefined%
      \relax%
    \else%
      \setlength{\unitlength}{\unitlength * \real{\svgscale}}%
    \fi%
  \else%
    \setlength{\unitlength}{\svgwidth}%
  \fi%
  \global\let\svgwidth\undefined%
  \global\let\svgscale\undefined%
  \makeatother%
  \begin{picture}(1,0.35011409)%
    \lineheight{1}%
    \setlength\tabcolsep{0pt}%
    \put(0,0){\includegraphics[width=\unitlength,page=1]{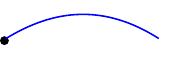}}%
    \put(0.25809339,0.32897537){\makebox(0,0)[lt]{\lineheight{1.25}\smash{\begin{tabular}[t]{l}$Q(z_1,z_2)$\end{tabular}}}}%
    \put(0.00808585,0.00484206){\makebox(0,0)[lt]{\lineheight{1.25}\smash{\begin{tabular}[t]{l}$z_1$\end{tabular}}}}%
    \put(0.91753752,0.00897557){\makebox(0,0)[lt]{\lineheight{1.25}\smash{\begin{tabular}[t]{l}$z_2$\end{tabular}}}}%
    \put(0,0){\includegraphics[width=\unitlength,page=2]{quiver2.pdf}}%
  \end{picture}%
\endgroup%

	\caption[]{Contribution of $Q(z_1,z_2)$.}
	\label{Fig:Q1}
\end{figure}
\vskip -0.7cm
\begin{equation*}
\begin{aligned}
\LHS~&=~\p_{z_1}^{i-1}\p_{z_2}^{k-1}Q(z_1,z_2)\\
    &=~(-1)^{k-1}\sum_{m\geq i+k-2}\frac{m!}{(m-i-k+2)!}Q_m(z_1-z_2)^{m-i-k+2}\\
    &=~(-1)^{k-1}\sum_{n\geq0}\frac{1}{n!}(z_1-z_2)^n(i+n+k-2)!Q_{i+n+k-2}\\
    &=~\sum_{n\geq0}\p_{z_1}^{i+n-1}\p_{z_2}^{k-1}Q(z_1,z_2)\big|_{z_1=z_2}\frac{1}{n!}(z_1-z_2)^n=\RHS.
\end{aligned}
\end{equation*}

The second case is given by the self-loop connecting $\p^i\phi(z_1)$ to $\p^j\phi(z_1)$:
\vskip 0.2cm
\begin{figure}[H]
	\centering
\begingroup%
  \makeatletter%
  \providecommand\color[2][]{%
    \errmessage{(Inkscape) Color is used for the text in Inkscape, but the package 'color.sty' is not loaded}%
    \renewcommand\color[2][]{}%
  }%
  \providecommand\transparent[1]{%
    \errmessage{(Inkscape) Transparency is used (non-zero) for the text in Inkscape, but the package 'transparent.sty' is not loaded}%
    \renewcommand\transparent[1]{}%
  }%
  \providecommand\rotatebox[2]{#2}%
  \newcommand*\fsize{\dimexpr\f@size pt\relax}%
  \newcommand*\lineheight[1]{\fontsize{\fsize}{#1\fsize}\selectfont}%
  \ifx\svgwidth\undefined%
    \setlength{\unitlength}{60.62814038bp}%
    \ifx\svgscale\undefined%
      \relax%
    \else%
      \setlength{\unitlength}{\unitlength * \real{\svgscale}}%
    \fi%
  \else%
    \setlength{\unitlength}{\svgwidth}%
  \fi%
  \global\let\svgwidth\undefined%
  \global\let\svgscale\undefined%
  \makeatother%
  \begin{picture}(1,0.45600647)%
    \lineheight{1}%
    \setlength\tabcolsep{0pt}%
    \put(0,0){\includegraphics[width=\unitlength,page=1]{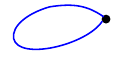}}%
    \put(-0.60499329,-0.00523594){\makebox(0,0)[lt]{\lineheight{1.25}\smash{\begin{tabular}[t]{l}$Q(z_1,z_1)$\end{tabular}}}}%
    \put(0.80194339,0.01162954){\makebox(0,0)[lt]{\lineheight{1.25}\smash{\begin{tabular}[t]{l}$z_1$\end{tabular}}}}%
  \end{picture}%
\endgroup%

	\caption[]{Contribution of $Q(z_1,z_1)$.}
	\label{Fig:Q1}
\end{figure}

\vskip -0.8cm
\begin{align*}
\LHS=~&\p_{z_1}^{i-1}\p_w^{j-1}Q(z_1,w)\bigg|_{w=z_1}\\
=~&\sum_{m\geq0}\left[\p_{z_2}^{i+m-1}\p_w^{j-1}Q(z_2,w)\frac{(z_1-z_2)^m}{m!}\right]\bigg|_{w=z_1}\\
=~&\sum_{m,n\geq0}\left[\p_{z_2}^{i+m-1}\p_w^{j+n-1}Q(z_2,w)\big|_{w=z_2}\frac{(w-z_2)^n}{n!}\frac{(z_1-z_2)^m}{m!}\right]\bigg|_{w=z_1}\\
=~&\sum_{m,n\geq0}\p_{z_2}^{i+m-1}\p_w^{j+n-1}Q(z_2,w)\bigg|_{w=z_2}\frac{(z_1-z_2)^{m+n}}{m!n!}=\RHS.
\end{align*}
That is, both sides are equal to $(-1)^{j-1}(i+j-2)!Q_{i+j-2}$.
\vskip 0.1cm

The third case is given by the edge connecting $\p^i\phi(z_1)$ to $\p^j\phi(z_l)$ $(l\neq2)$:
\begin{figure}[H]
	\centering
\begingroup%
  \makeatletter%
  \providecommand\color[2][]{%
    \errmessage{(Inkscape) Color is used for the text in Inkscape, but the package 'color.sty' is not loaded}%
    \renewcommand\color[2][]{}%
  }%
  \providecommand\transparent[1]{%
    \errmessage{(Inkscape) Transparency is used (non-zero) for the text in Inkscape, but the package 'transparent.sty' is not loaded}%
    \renewcommand\transparent[1]{}%
  }%
  \providecommand\rotatebox[2]{#2}%
  \newcommand*\fsize{\dimexpr\f@size pt\relax}%
  \newcommand*\lineheight[1]{\fontsize{\fsize}{#1\fsize}\selectfont}%
  \ifx\svgwidth\undefined%
    \setlength{\unitlength}{114.16369341bp}%
    \ifx\svgscale\undefined%
      \relax%
    \else%
      \setlength{\unitlength}{\unitlength * \real{\svgscale}}%
    \fi%
  \else%
    \setlength{\unitlength}{\svgwidth}%
  \fi%
  \global\let\svgwidth\undefined%
  \global\let\svgscale\undefined%
  \makeatother%
  \begin{picture}(1,0.24420712)%
    \lineheight{1}%
    \setlength\tabcolsep{0pt}%
    \put(0,0){\includegraphics[width=\unitlength,page=1]{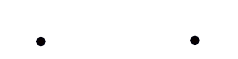}}%
    \put(-0.00265176,0.00973783){\makebox(0,0)[lt]{\lineheight{1.25}\smash{\begin{tabular}[t]{l}$z_1$\end{tabular}}}}%
    \put(0.9075821,0.00617602){\makebox(0,0)[lt]{\lineheight{1.25}\smash{\begin{tabular}[t]{l}$z_i$\end{tabular}}}}%
    \put(0.32001957,0.21724476){\makebox(0,0)[lt]{\lineheight{1.25}\smash{\begin{tabular}[t]{l}$P(z_1-z_i)$\end{tabular}}}}%
    \put(0,0){\includegraphics[width=\unitlength,page=2]{quiver4.pdf}}%
  \end{picture}%
\endgroup%

	\caption[]{Contribution of $P(z_1,z_l)$.}
	\label{Fig:Q1}
\end{figure}
\vskip -0.8cm
$$\LHS~=~\p_{z_1}^{i-1}\p_{z_l}^{j-1}P(z_1-z_l)=~\sum_{m\geq0}\frac{1}{m!}\p_{z_2}^{i-1+m}\p_{z_l}^{j-1}P(z_2-z_l)(z_1-z_2)^m=\RHS.\qedhere$$
\end{proof}

\subsection{Proof of Axiom (6)}
The proof of (6) will depend on the properties of the Weierstrass $\wp$ function. In particular, the product formulas (Proposition \ref{prodformula}) of the propagator $P(z-w;\tau)$ play an important role. First, let us recall the relevant functions including the modified Weierstrass $\wp$ function (i.e. the propagator), the modified Weierstrass zeta function and the Jacobi theta function, and discuss their relations. For more details, see \cite{Lang}.

The modified Weierstrass elliptic function $P(z;\tau)$ is defined by
$$P(z;\tau)=\wp(z;\tau)+\frac{\pi^2}{3}E_2(\tau).$$
The corresponding modified Weierstrass zeta function is defined by
$$\zeta(z;\tau)=~\frac{1}{z}+\sum_{(m,n)\in(\mathbb{Z}+\mathbb{Z})\setminus\{(0,0)\}}\left(\frac{1}{z-m-n\tau}+\frac{1}{m+n\tau}+\frac{z}{(m+n\tau)^2}\right)-\frac{\pi^2}{3}zE_2(\tau).$$
The following lemma is obvious.
\begin{lem} The functions $P(z;\tau)$ and $\zeta(z;\tau)$ satisfy the following properties
\begin{enumerate}
  \item $P(z;\tau)$ is even and elliptic in $z$;
  \item $\zeta(z;\tau)$ is odd and quasi-elliptic in $z$:
  $$\zeta(z+1;\tau)=\zeta(z;\tau),\quad \zeta(z+\tau;\tau)=\zeta(z;\tau)-2\pi i.$$
  Clearly, $P(z;\tau)=-\p_z\zeta(z;\tau).$
\end{enumerate}
\end{lem}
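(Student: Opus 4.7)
The plan is to reduce everything to classical facts about the Weierstrass $\wp$ and $\zeta$ functions, then track how the modifications $+\frac{\pi^2}{3}E_2(\tau)$ in $P$ and $-\frac{\pi^2}{3}zE_2(\tau)$ in $\zeta$ affect the relevant properties. First I would verify the derivative relation $P(z;\tau)=-\partial_z\zeta(z;\tau)$ by term-by-term differentiation of the defining series: differentiating $\frac{1}{z-\omega}+\frac{1}{\omega}+\frac{z}{\omega^2}$ in $z$ gives $-\frac{1}{(z-\omega)^2}+\frac{1}{\omega^2}$, which reproduces the Weierstrass $\wp$ series up to sign, while the linear correction $-\frac{\pi^2}{3}zE_2(\tau)$ differentiates to $-\frac{\pi^2}{3}E_2(\tau)$, and applying the outer minus sign matches the shift $+\frac{\pi^2}{3}E_2(\tau)$ in the definition of $P$.

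Property (1) is then immediate: the classical $\wp(z;\tau)$ is well known to be even and doubly periodic with lattice $\mathbb{Z}+\mathbb{Z}\tau$, and adding the $z$-independent quantity $\frac{\pi^2}{3}E_2(\tau)$ preserves both. For the oddness of $\zeta$ in (2), I would pair each nonzero lattice point $\omega=m+n\tau$ with its negative $-\omega$ in the defining sum; the combined contribution $\bigl(\frac{1}{z-\omega}+\frac{1}{z+\omega}\bigr)+\bigl(\frac{1}{\omega}-\frac{1}{\omega}\bigr)+z\bigl(\frac{1}{\omega^2}+\frac{1}{\omega^2}\bigr)$ is manifestly odd in $z$ (the constant terms cancel and the rest are odd), and the free terms $\frac{1}{z}$ and $-\frac{\pi^2}{3}zE_2(\tau)$ are also odd.

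For the quasi-periodicity, I would invoke the classical result that the unmodified Weierstrass zeta $\widetilde\zeta$ (without the $-\frac{\pi^2}{3}zE_2(\tau)$ correction) satisfies $\widetilde\zeta(z+\omega_i;\tau)=\widetilde\zeta(z;\tau)+\eta_i$ for the two periods $\omega_1=1,\omega_2=\tau$, with the Legendre relation $\eta_1\tau-\eta_2=2\pi i$. The key identification is $\eta_1=\frac{\pi^2}{3}E_2(\tau)$ (a standard Eisenstein-series evaluation of the quasi-period associated to $\omega_1=1$). Then shifting $z\mapsto z+1$ in the definition of $\zeta$ adds $\eta_1$ from $\widetilde\zeta$ and subtracts $\frac{\pi^2}{3}E_2(\tau)=\eta_1$ from the linear correction, giving exact periodicity; shifting $z\mapsto z+\tau$ adds $\eta_2$ from $\widetilde\zeta$ and subtracts $\frac{\pi^2}{3}\tau E_2(\tau)=\eta_1\tau$, so the net change is $\eta_2-\eta_1\tau=-2\pi i$ by Legendre.

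The main obstacle is really just bookkeeping of the constants: one must be careful that the normalization of $E_2$ used in the paper matches the identification $\eta_1=\frac{\pi^2}{3}E_2(\tau)$, and that the sign conventions in the Legendre relation line up so that the $\tau$-shift gives precisely $-2\pi i$ rather than $+2\pi i$. Everything else is either a direct series manipulation or a citation of classical Weierstrass theory.
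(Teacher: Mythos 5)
Your proposal is correct, and in fact supplies more than the paper does: the authors simply declare this lemma ``obvious'' and give no proof, so your reduction to classical Weierstrass theory is exactly the standard verification they are implicitly relying on. The one normalization you flag does check out with the paper's conventions: since $Q_0=\frac{\pi^2}{3}E_2(\tau)=G_2(\tau)$ is the quasi-period $\eta_1$ for the period $1$ of the lattice $\mathbb{Z}+\mathbb{Z}\tau$, the $1$-shift cancels exactly and the $\tau$-shift yields $\eta_2-\eta_1\tau=-2\pi i$ by Legendre, as you computed; the pairing argument for oddness is also legitimate because the corrected series converges absolutely.
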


Another important function is the Jacobi theta function $\theta(z;\tau)$, which is defined to be
$$\theta(z;\tau)=\sum_{n=-\infty}^{\infty}\exp(\pi in^2\tau+2\pi i n z).$$
It is 1-periodic in $z$ and $\tau$-quasi-periodic in $z$. More explicitly, for $m,n\in\mathbb{Z}$,
$$\theta(z+m+n\tau;\tau)=\exp(-\pi in^2\tau-2\pi i nz)\theta(z;\tau).$$

It is known that the Jacobi theta function $\theta(z;\tau)$ satisfies the heat equation
\begin{equation}\label{thetaheat}
\p_{\tau}\theta(z;\tau)=-\frac{i}{4\pi}\p_z^2\theta(z;\tau).
\end{equation}

The Jacobi theta function $\theta(z;\tau)$ is related to $\zeta(z;\tau)$ and $P(z;\tau)$ as follows. First, the Jacobi theta function defined above is also considered along with three auxiliary theta functions. Here we only consider
$$\theta_1(z;\tau)=-\exp\bigg(\frac{1}{4}\pi i\tau+\pi i(z+\frac{1}{2})\bigg)\theta(z+\frac{1}{2}+\frac{1}{2}\tau;\tau).$$
Then
$$\zeta(z;\tau)=-\p_z\log\theta_1(z;\tau),\quad P(z;\tau)=\p_z^2\log\theta_1(z;\tau).$$

Using the heat equation \eqref{thetaheat} for $\theta(z;\tau)$ and the above relations, we get
\begin{lem} The following identity holds
\begin{equation*}
2\pi i\frac{d}{d\tau}\zeta(z)=-\zeta(z)P(z)-\frac{1}{2}P'(z).
\end{equation*}
Moreover, taking derivatives with respect to $z$, the following identity holds
\begin{equation}\label{PDE}
2\pi i\frac{d}{d\tau}P=\zeta P'-P^2+\frac{1}{2}P''.
\end{equation}
\end{lem}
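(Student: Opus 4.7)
The plan is to derive both identities from the heat equation for the Jacobi theta function via the relations $\zeta(z;\tau) = -\partial_z \log\theta_1(z;\tau)$ and $P(z;\tau) = \partial_z^2 \log\theta_1(z;\tau)$ already recorded in the preceding paragraph. The first step is to promote the heat equation $\partial_\tau \theta = -\frac{i}{4\pi}\partial_z^2\theta$ to the same equation for $\theta_1$. Since $\theta_1(z;\tau) = -e^{\pi i\tau/4 + \pi i(z+1/2)}\,\theta(z+\tfrac12+\tfrac{\tau}{2};\tau)$, this is a direct computation: the $\tau$-derivative generates three contributions (from the prefactor, the shift of argument by $\tau/2$, and the heat equation for $\theta$), which after using $\partial_z\theta_1 = \pi i\theta_1 - e^{\ast}\partial_u\theta$ and its square combine to give exactly $\partial_\tau\theta_1 = -\frac{i}{4\pi}\partial_z^2\theta_1$.

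Once the heat equation for $\theta_1$ is available, setting $L=\log\theta_1$ and dividing by $\theta_1$ yields
\[
\partial_\tau L \;=\; -\frac{i}{4\pi}\frac{\partial_z^2\theta_1}{\theta_1}\;=\;-\frac{i}{4\pi}\bigl(\partial_z^2 L + (\partial_z L)^2\bigr)\;=\;-\frac{i}{4\pi}\bigl(\zeta^2 + P\bigr),
\]
using that $\partial_z^2\theta_1/\theta_1 = \partial_z^2 L + (\partial_z L)^2$ and the defining relations for $\zeta$ and $P$. Applying $\partial_z$ to this scalar identity, commuting $\partial_z$ with $\partial_\tau$, and substituting $\partial_z\zeta = -P$ converts the right-hand side into a polynomial in $\zeta$, $P$, $P'$. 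Rearranging and multiplying by $2\pi i$ produces the first claimed identity $2\pi i\,\partial_\tau\zeta = -\zeta P - \tfrac12 P'$ (up to tracking signs carefully in the application of $\partial_z(\zeta^2)=2\zeta\partial_z\zeta$).

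The second identity is then obtained formally from the first by a further $\partial_z$: the left side becomes $2\pi i\,\partial_\tau(\partial_z\zeta) = -2\pi i\,\partial_\tau P$, while the right side expands to $-(\partial_z\zeta)P - \zeta P' - \tfrac12 P''$; substituting $\partial_z\zeta=-P$ once more and multiplying by $-1$ gives $2\pi i\,\partial_\tau P = \zeta P' - P^2 + \tfrac12 P''$, as claimed.

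The only real obstacle is the bookkeeping in Step 1, namely verifying that the three explicit pieces in $\partial_\tau\theta_1$ (the $\pi i/4$ from the $e^{\pi i\tau/4}$ factor, the $\tfrac12 \partial_u\theta$ from the $\tau/2$-shift, and the heat equation applied to $\theta$) recombine cleanly into $-\frac{i}{4\pi}\partial_z^2\theta_1$; all other steps are routine applications of the chain rule and the identities $\zeta = -\partial_z L$, $P = \partial_z^2 L$, $\partial_z\zeta = -P$. Once that heat equation for $\theta_1$ is in hand, both displayed formulas follow with no further input beyond one differentiation and algebraic manipulation.
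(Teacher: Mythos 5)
Your strategy is exactly the one the paper intends (its own proof is the single sentence ``Using the heat equation for $\theta$ and the above relations, we get''), and the two outer steps are sound: the bookkeeping in Step~1 does work out, since with $u=z+\tfrac12+\tfrac{\tau}{2}$ and $E=-\exp(\tfrac14\pi i\tau+\pi i(z+\tfrac12))$ one finds $\partial_z^2\theta_1=-\pi^2E\theta+2\pi iE\,\partial_u\theta+E\,\partial_u^2\theta$, so that $-\tfrac{i}{4\pi}\partial_z^2\theta_1=\tfrac{\pi i}{4}E\theta+\tfrac12E\,\partial_u\theta+E\,\partial_\tau\theta=\partial_\tau\theta_1$; and the passage from the first identity to \eqref{PDE} by one more $\partial_z$ together with $\partial_z\zeta=-P$ is routine, as you say.

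The genuine gap is the sign you defer in the middle step. If you use the relations exactly as printed, $\zeta=-\partial_z\log\theta_1$ and $P=\partial_z^2\log\theta_1$, then $\partial_z^2\theta_1/\theta_1=\partial_z^2L+(\partial_zL)^2=P+\zeta^2$ as you wrote, whence $2\pi i\,\partial_\tau L=\tfrac12(\zeta^2+P)$; applying $\partial_z$ and using $\partial_z\zeta=-P$ gives $-2\pi i\,\partial_\tau\zeta=\tfrac12(-2\zeta P+P')$, i.e.\ $2\pi i\,\partial_\tau\zeta=+\zeta P-\tfrac12P'$, which is \emph{not} the claimed identity. The resolution is that the printed relations carry a sign error: the correct normalization is $\zeta=+\partial_z\log\theta_1$ and $P=-\partial_z^2\log\theta_1$, as one checks from the Laurent expansion at $z=0$, namely $\partial_z\log\theta_1=\tfrac1z+\tfrac{\theta_1'''(0)}{3\theta_1'(0)}z+O(z^3)=\tfrac1z-\tfrac{\pi^2}{3}E_2(\tau)z+O(z^3)=\zeta(z;\tau)$. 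With these, $\partial_z^2\theta_1/\theta_1=\zeta^2-P$, hence $2\pi i\,\partial_\tau\zeta=-\tfrac12\,\partial_z(\zeta^2-P)=-\zeta P-\tfrac12P'$, which is the lemma. So your argument is salvageable verbatim once the sign convention is fixed, but as written the step you flag as needing ``careful sign tracking'' would in fact terminate at the wrong identity.
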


Furthermore, we give the following additional formulas for the Weierstrass $\wp$ functions and the modified zeta functions, which lead to the product formulas \ref{prodformula} for the propagators with some efforts.

\begin{lem}[additional formula]\label{wpeq} The Weierstrass $\wp$ function satisfies
$$\wp(z+w)=\frac{1}{4}\left\{\frac{\wp'(z)-\wp'(w)}{\wp(z)-\wp(w)}\right\}^2-\wp(z)-\wp(w).$$
\end{lem}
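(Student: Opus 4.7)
The plan is to derive the addition formula via the classical secant-line argument on the affine cubic $y^{2} = 4x^{3} - g_{2}x - g_{3}$ parametrized by $u \mapsto (\wp(u), \wp'(u))$. Taking $z, w$ generic so that $\wp(z) \neq \wp(w)$, I would first consider the line through the points $P = (\wp(z),\wp'(z))$ and $Q = (\wp(w),\wp'(w))$. Its slope is $m = \frac{\wp'(z)-\wp'(w)}{\wp(z)-\wp(w)}$; writing the line as $y = mx+b$ and substituting into the cubic gives
\begin{equation*}
4x^{3} - m^{2}x^{2} - (2mb+g_{2})x - (b^{2}+g_{3}) = 0,
\end{equation*}
whose three roots sum (by Vieta) to $m^{2}/4$.

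Two roots are visible: $x_{1} = \wp(z)$ and $x_{2} = \wp(w)$. To identify the third root as $\wp(z+w)$, the direct route invokes the fact that the Abel--Jacobi parametrization $u \mapsto (\wp(u), \wp'(u))$ intertwines the translation group structure on $E_{\tau}$ with the chord--tangent group law on the cubic, so three collinear points correspond to parameters summing to zero modulo $\Lambda = \Z + \Z\tau$. Thus the third root equals $\wp(-z-w) = \wp(z+w)$ by evenness of $\wp$, and Vieta's relation rearranges to the stated formula. The generic hypothesis is removed by continuity.

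The main obstacle is that invoking the group law on the elliptic curve is logically close to what one wants to prove. A self-contained analytic alternative is to set
\begin{equation*}
F(z) := \tfrac{1}{4}\left(\tfrac{\wp'(z)-\wp'(w)}{\wp(z)-\wp(w)}\right)^{2} - \wp(z) - \wp(w) - \wp(z+w),
\end{equation*}
view it as an elliptic function in $z$ with $w$ fixed, and verify that its only candidate poles at $z \equiv 0, \pm w \pmod{\Lambda}$ have vanishing principal parts by direct Laurent expansion using $\wp(z) = z^{-2} + O(z^{2})$ and $\wp'(z) = -2z^{-3}+O(z)$. Then $F$ is a holomorphic elliptic function, hence constant, and the constant is seen to be zero by matching the finite part of the Laurent series at $z = 0$.
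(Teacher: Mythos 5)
Your proposal is correct; note, however, that the paper itself offers no proof of this lemma at all --- it is recorded as a classical ``additional [addition] formula'' and, like the companion Lemma~\ref{Peq}, is implicitly delegated to Chapter~18 of \cite{Lang}. So you are supplying an argument where the authors supply a citation. Both routes you sketch are standard and work. Concerning your worry about circularity in the chord--tangent route: it is unfounded, provided you identify the third intersection point not via the group law but via the elementary fact that for the elliptic function $f(u)=\wp'(u)-m\wp(u)-b$ (a single pole of order $3$ at $u=0$, hence exactly three zeros) the sum of zeros minus the sum of poles lies in $\Lambda$; since $z$ and $w$ are two of the zeros, the third is $\equiv -z-w$, and evenness of $\wp$ finishes the identification. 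That theorem is proved independently by integrating $u\,f'(u)/f(u)$ over a fundamental parallelogram, so nothing is assumed about addition on the curve. Your analytic fallback is also fine and fully self-contained; the only refinement worth recording is that of the three candidate poles $z\equiv 0,\pm w$, the one at $z\equiv w$ is actually removable (numerator and denominator of the difference quotient vanish simultaneously, the ratio tending to $\wp''(w)/\wp'(w)$), so the genuine cancellations to verify are at $z\equiv 0$ and $z\equiv -w$, after which Liouville plus the constant term of the Laurent expansion at $z=0$ (which one computes to be $0$) closes the argument.
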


\begin{lem}
\begin{equation}\label{Peq}
\frac{P'(z)-P'(w)}{P(z)-P(w)}=2\zeta(z+w)-2\zeta(z)-2\zeta(w).
\end{equation}
\end{lem}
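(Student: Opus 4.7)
The plan is to reduce the identity \eqref{Peq} to the classical Weierstrass addition formula for the zeta function and then prove the classical formula by a standard residue analysis of a carefully chosen auxiliary elliptic function. The first reduction is essentially free: since $P(z;\tau) - \wp(z;\tau) = \frac{\pi^2}{3}E_2(\tau)$ is independent of $z$, we have $P'(z) = \wp'(z)$ and $P(z)-P(w) = \wp(z)-\wp(w)$, so the left-hand side of \eqref{Peq} is unchanged on replacing $P$ by $\wp$. Similarly, the linear-in-$z$ correction $-\frac{\pi^2}{3} z E_2(\tau)$ in the modified $\zeta$ drops out of the combination $\zeta(z+w) - \zeta(z) - \zeta(w)$. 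Hence it suffices to prove the classical identity
$$\frac{\wp'(z)-\wp'(w)}{\wp(z)-\wp(w)} \;=\; 2\zeta_{\mathrm{cl}}(z+w) - 2\zeta_{\mathrm{cl}}(z) - 2\zeta_{\mathrm{cl}}(w),$$
where $\zeta_{\mathrm{cl}}$ denotes the classical Weierstrass zeta function.

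For the classical identity, I would fix a generic $w$ and study
$$F(z) \;:=\; \zeta_{\mathrm{cl}}(z+w) - \zeta_{\mathrm{cl}}(z) - \zeta_{\mathrm{cl}}(w) - \frac{1}{2}\cdot\frac{\wp'(z)-\wp'(w)}{\wp(z)-\wp(w)}$$
as a meromorphic function of $z$. Although $\zeta_{\mathrm{cl}}$ itself is only quasi-periodic, its quasi-periods are constants in $z$, so the difference $\zeta_{\mathrm{cl}}(z+w) - \zeta_{\mathrm{cl}}(z)$ is elliptic in $z$; the rational expression is manifestly elliptic, making $F$ elliptic. The only candidate simple poles are at $z \equiv 0$ and $z \equiv -w$ modulo the lattice; the apparent singularity at $z \equiv w$ is removable because $\wp'(z)-\wp'(w)$ has a simple zero there. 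A direct residue computation at $z=0$ gives $\zeta_{\mathrm{cl}}(z) \sim 1/z$ cancelled by $\frac{1}{2}\cdot(-2/z)$ from the rational part, and at $z=-w$, using $\wp'(-w) = -\wp'(w)$ together with $\wp(z)-\wp(w) \sim -\wp'(w)(z+w)$, the residue of $\zeta_{\mathrm{cl}}(z+w)$ is cancelled by $\frac{1}{2}\cdot 2/(z+w)$. So $F$ has no poles, hence is constant by Liouville's theorem on the torus.

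To pin down this constant I would let $z \to 0$: the Laurent expansions $\zeta_{\mathrm{cl}}(z+w) - \zeta_{\mathrm{cl}}(w) = -\wp(w)z - \frac{1}{2}\wp'(w)z^2 + O(z^3)$, $\zeta_{\mathrm{cl}}(z) = 1/z + O(z^3)$, and (by a short computation using the leading Laurent data of $\wp$ and $\wp'$) $\frac{1}{2}(\wp'(z)-\wp'(w))/(\wp(z)-\wp(w)) = -1/z - \wp(w)z - \frac{1}{2}\wp'(w)z^2 + O(z^3)$ combine to give $F(z) = O(z^3)$, forcing $F \equiv 0$. The main obstacle is the careful bookkeeping of these residue cancellations and low-order Laurent expansions; no deeper input is needed, and indeed one may equivalently derive \eqref{Peq} directly from the $\wp$-addition formula of Lemma \ref{wpeq} by observing that $\partial_z$ applied to both sides produces $2P(z) - 2P(z+w)$, after which the argument reduces to determining a single constant of integration.
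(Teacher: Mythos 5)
Your proof is correct, but it takes a genuinely different route from the paper's. The paper reduces the identity to the Weierstrass sigma function: it invokes the classical factorization $\wp(z)-\wp(w)=-\sigma(z+w)\sigma(z-w)/(\sigma^2(z)\sigma^2(w))$ from Lang, takes the logarithmic derivative in $z$ to get $P'(z)/(P(z)-P(w))=\zeta(z+w)+\zeta(z-w)-2\zeta(z)$, repeats in $w$, and adds; the cancellation of the $-\tfrac{\pi^2}{3}zE_2$ corrections happens for the same reason as in your reduction. You instead strip the $E_2$-modifications first (correctly, since they are affine in $z$ and cancel in both the ratio and the zeta combination) and then prove the classical addition identity from scratch via Liouville's theorem: ellipticity of the difference $F$, cancellation of the residues at $z\equiv 0$ and $z\equiv -w$, removability at $z\equiv w$, and a Laurent expansion at $z=0$ to kill the constant. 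Your residue and expansion computations all check out. The paper's route is shorter but leans on a nontrivial external identity; yours is longer but self-contained and makes the pole structure explicit, which is arguably more transparent. One small caveat: your closing aside that the identity "equivalently" follows from Lemma~\ref{wpeq} because $\partial_z$ of both sides gives $2P(z)-2P(z+w)$ is not immediate --- computing $\partial_z$ of the left-hand ratio and showing it equals $2\wp(z)-2\wp(z+w)$ is itself an addition-type identity requiring an argument, so that remark should not be read as a substitute for your main proof.
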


\begin{proof} This lemma follows from the results in Chapter 18 of \cite{Lang}. Recall that the Weierstrass sigma function $\sigma$ is defined by
$$\sigma(z)=z\prod_{\omega\in(\mathbb{Z}+\mathbb{Z}\tau)\setminus\{(0,0)\}}\left(1-\frac{z}{\omega}\right)e^{z/\omega+\frac{1}{2}(z/\omega)^2}.$$
It is known that $\sigma(z)$ is related to the modified Weierstrass zeta function as follows
$$\zeta(z;\tau)+\frac{\pi^2}{3}zE_2(\tau)=\frac{\sigma'(z)}{\sigma(z)}$$
Using the identity
$$P(z)-P(w)=\wp(z)-\wp(w)=-\frac{\sigma(z+w)\sigma(z-w)}{\sigma^2(z)\sigma^2(w)}.$$
Taking the logarithm of this identity and then taking derivatives with respect to $z$, we get
$$\frac{P'(z)}{P(z)-P(w)}=\zeta(z+w)+\zeta(z-w)-2\zeta(z).$$
Repeating the computation with respect to $w$, we get the desired equation.
\end{proof}

\begin{prop}[Product formulas]\label{prodformula} (1) For three distinct points $u$, $v$ and $w$, we have the product formula
\begin{align*}
P(u-w)P(w-v)=~&2\pi i\frac{d}{d\tau}P(u-v)+\big(P(w-u)+P(w-v)\big)P(u-v)\\
                 &+\big(\zeta(w-u)-\zeta(w-v))\frac{d}{du}P(u-v).
\end{align*}

(2) For two distinct points $u$, $w$, the following identity holds
$$P(w-u)P(w-u)=2\pi i\frac{d}{d\tau}\frac{\pi^2}{3}E_2(\tau)+\frac{2\pi^2}{3}E_2(\tau)P(w-u)+\frac{1}{6}\frac{d^2}{dw^2}P(w-u).$$
\end{prop}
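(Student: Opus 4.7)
The plan is to establish both product formulas by elliptic-function analysis on the torus, using the PDE \eqref{PDE} as the bridge between the analytic structure and the modular structure.

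For Part (1), I would fix $u,v,\tau$ and view both sides as meromorphic functions of $w \in E_\tau$. The first observation is that both sides are elliptic in $w$; indeed, the only non-trivially-elliptic ingredient is the difference $\zeta(w-u)-\zeta(w-v)$, in which the quasi-period shifts $-2\pi i$ under $w \mapsto w+\tau$ cancel. Next, both sides have possible poles in $w$ only at $w \equiv u$ and $w \equiv v \pmod{\mathbb{Z}+\mathbb{Z}\tau}$. I would then Laurent-expand each side at $w = u$ through the $O(1)$ term, using the expansions $P(z) = z^{-2} + \frac{\pi^2}{3}E_2(\tau) + O(z^2)$ and $\zeta(z) = z^{-1} - \frac{\pi^2}{3}E_2(\tau)\,z + O(z^3)$. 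The polar coefficients (at $(w-u)^{-2}$ and $(w-u)^{-1}$) will match trivially on both sides, and the matching of the $O(1)$ coefficients reduces \emph{exactly} to equation \eqref{PDE} evaluated at the argument $u-v$. Both sides are manifestly symmetric under $u \leftrightarrow v$, so the same conclusion holds at $w = v$. Hence $\LHS - \RHS$ is a holomorphic elliptic function of $w$, therefore constant in $w$, and its value at $w = u$ (which equals the constant Taylor coefficient we have just shown to agree) vanishes.

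For Part (2), the identity is one-variable in $z = w-u$, and I would reduce it to the classical Weierstrass theory. Writing $P = \wp + c$ with $c(\tau) = \frac{\pi^2}{3}E_2(\tau)$, the identity $\wp''(z) = 6\wp(z)^2 - g_2/2$ (obtained by differentiating $(\wp')^2 = 4\wp^3 - g_2\wp - g_3$) yields $P(z)^2 = \frac{1}{6}P''(z) + 2c\,P(z) + \frac{g_2}{12} - c^2$. The claim thus reduces to the scalar identity $\frac{g_2}{12} - c^2 = 2\pi i\,\frac{dc}{d\tau}$, which follows from the classical identification $g_2(\tau) = \frac{4\pi^4}{3}E_4(\tau)$ combined with Ramanujan's differential equation $\frac{dE_2}{d\tau} = \frac{\pi i}{6}(E_2^2 - E_4)$.

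The hard part will be the Laurent-expansion bookkeeping in Part (1): one must carefully track the subleading constants $\pm\frac{\pi^2}{3}E_2$ hidden in $P(z) - z^{-2}$ and in $\zeta(z) - z^{-1}$, since these combine nontrivially across the three terms on the right-hand side before collapsing---via \eqref{PDE}---to reproduce the $\frac{1}{2}P''(u-v)$ contribution on the left. Conceptually, the PDE \eqref{PDE} is precisely what governs the constant-term matching, which explains why it was established in the preceding lemma. Part (2) is, by contrast, almost immediate from the Weierstrass ODE and Ramanujan's formula.
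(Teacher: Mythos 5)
Your proof is correct. Part (2) is essentially the paper's own argument: differentiate the Weierstrass cubic to get $\wp''=6\wp^2-g_2/2$, absorb the shift $c=\frac{\pi^2}{3}E_2$, and reduce the leftover scalar identity to Ramanujan's equation for $E_2$ together with $g_2=\frac{4}{3}\pi^4E_4$; this matches the paper line for line. Part (1), however, follows a genuinely different route. The paper proceeds algebraically: it applies the addition formula for $\wp$ (Lemma \ref{wpeq}) to the decomposition $u-v=(u-w)+(w-v)$, substitutes \eqref{Peq} to replace the quotient $\frac{P'(u-w)-P'(w-v)}{P(u-w)-P(w-v)}$ by zeta functions, differentiates in $u$ and $v$, and only then invokes \eqref{PDE}. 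You instead run a Liouville-type argument in the variable $w$: both sides are elliptic in $w$ (the $-2\pi i$ quasi-period shifts of $\zeta(w-u)$ and $\zeta(w-v)$ cancel in the difference) with poles only at $w\equiv u,v$; the principal parts at $w=u$ agree by inspection ($\epsilon^{-2}P(u-v)+\epsilon^{-1}P'(u-v)$ on each side, $\epsilon=w-u$), the constant terms agree exactly by \eqref{PDE} evaluated at $u-v$, the pole at $w=v$ is handled by the $u\leftrightarrow v$ symmetry (which does hold on the right: $\zeta(w-u)-\zeta(w-v)$ and $\frac{d}{du}P(u-v)$ each flip sign), and a pole-free elliptic function is constant. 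I checked the Laurent bookkeeping: the $\frac{\pi^2}{3}E_2P(u-v)$ contributions cancel between the two sides and the residual identity is precisely \eqref{PDE}, as you predicted. Your approach buys independence from the addition theorem and from \eqref{Peq} altogether---it needs only the local expansions of $P$ and $\zeta$ and the relation $P=-\zeta'$---and it isolates \eqref{PDE} as the single nontrivial input; the paper's computation is less conceptual but constructs the right-hand side rather than verifying a guessed formula.
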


\begin{proof}
(1) First, by Lemma \ref{wpeq}, $P(u-v)=\wp(u-v)+\frac{\pi^2}{3}E_2(\tau)$ satisfies
\begin{equation}\label{P(u-v)}
P(u-v)=\frac{1}{4}\left\{\frac{P'(u-w)-P'(w-v)}{P(u-w)-P(w-v)}\right\}^2-P(u-w)-P(w-v)+\frac{\pi^2}{3}E_2(\tau).
\end{equation}
Substituting Equation \eqref{Peq} into Equation $\eqref{P(u-v)}$, and then taking derivatives with respect to $u$ and $v$, we get
\begin{align*}
-P''(u-v)=&-2P(u-v)^2+2\zeta(u-v)P'(u-v)+2\big(P(u-w)+P(w-v)\big)P(u-v)\\
          &-2\big(\zeta(u-w)+\zeta(w-v)\big)P'(u-v)-2P(u-w)P(w-v).
\end{align*}
Finally, using the differential equation \eqref{PDE}, we get the first product formula.
\vskip 0.1cm

(2) By a direct computation,
\begin{align*}
P(w-u)P(w-u)=~&(\wp(w-u)+\frac{\pi^2}{3}E_2)^2\\
                =~&\wp(w-u)^2+\frac{2\pi^2}{3}E_2\wp(w-u)+\left(\frac{\pi^2}{3}E_2\right)^2\\
                \overset{(1)}=~&\frac{1}{6}\frac{d^2}{dw^2}\wp(w-u)+\frac{1}{9}\pi^4E_4+\frac{1}{9}\pi^4E_2^2+\frac{2}{3}\pi^2E_2\wp(w-u)\\
                \overset{(2)}=~&2\pi i\frac{d}{d\tau}\left(\frac{\pi^2}{3}E_2\right)+\frac{2\pi^2}{3}E_2P(w-u)+\frac{1}{6}\frac{d^2}{dw^2}\wp(w-u).
\end{align*}
Here in the equality (1), we use the defining equation of the Weierstrass $\wp$ function
$$\wp'(z)^2=4\wp^3-g_2\wp-g_3,\quad g_2(\tau)=\frac{4}{3}\pi^4E_4(\tau),$$
and in the equality (2), we use the Ramanujan identity
$$\frac{1}{2\pi i}\p_{\tau}E_2=\frac{1}{12}(E_2^2-E_4).\qedhere$$

\end{proof}

\begin{lem}\label{Axiom(6)} Let $a_i=\phi_{-1}1, i=1,\ldots, n$ be the primary vectors for the Virasoro algebra, then
\begin{align*}
S=~&S((\omega,w_1),\ldots,(\omega,w_m),(a_1,z_1),\ldots,(a_n,z_n),\tau)\\
 =~&\eta(\tau)^{-1}\langle:T(w_1):,\ldots,:T(w_m):,:\p\phi(z_1):,\ldots,:\p\phi(z_n):;\tau\rangle
\end{align*}
satisfies the axiom (6):
\begin{align*}
&S((\omega,w),(\omega,w_1),\ldots,(\omega,w_m),(a_1,z_1),\ldots,(a_n,z_n),\tau)\\
=~&2\pi i\frac{d}{d\tau}S+\sum_{k=1}^n(\wp_1(w-z_k,\tau)-(w-z_k)\frac{\pi^2}{3}E_2(\tau))\frac{d}{dz_k}S\\
&+\sum_{k=1}^m(\wp_1(w-w_k,\tau)-(w-w_k)\frac{\pi^2}{3}E_2(\tau))\frac{d}{dw_k}S\\
&+\sum_{k=1}^n(\wp_2(w-z_k,\tau)+\frac{\pi^2}{3}E_2(\tau))S\\
&+\sum_{k=1}^m(\wp_2(w-w_k,\tau)+\frac{\pi^2}{3}E_2(\tau))2S\\
&+\frac{1}{2}\sum_{k=1}^m\wp_4(w-w_k,\tau)S((\omega,w_1),\ldots,\widehat{(\omega,w_k)},\ldots,(a_n,z_n),\tau).
\end{align*}
\end{lem}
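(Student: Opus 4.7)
The plan is to apply Wick's theorem to the insertion $T(w) = \tfrac{1}{2}{:}\partial\phi(w)^2{:}$ and reduce the resulting propagator products via the product formulas of Proposition \ref{prodformula}, matching each term against the right-hand side of Axiom~(6). Concretely, $\widetilde{S}:=S((\omega,w),(\omega,w_1),\ldots,(a_n,z_n),\tau)$ decomposes according to how the two $\partial\phi(w)$-legs are contracted in a normal ordered Feynman graph: (A) a self-loop at $w$, contributing $\tfrac{1}{2}Q(w,w)\cdot S = \tfrac{\pi^2}{6}E_2(\tau)\,S$; (B) the two legs contract to two distinct $\partial\phi$-legs at positions $x$ and $y$ (each being a $z_k$ or one of the two legs of some $T(w_l)$), producing a factor $P(w-x)P(w-y)$ times the reduced correlator; (C) both legs contract to the two legs of a single $T(w_k)$, producing $P(w-w_k)^2$ times the correlator with $T(w_k)$ replaced by the vacuum.

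In the next step, apply Proposition \ref{prodformula}. Formula (2) handles case (C):
\[
P(w-w_k)^2 = 2\pi i\,\partial_\tau\tfrac{\pi^2}{3}E_2 + \tfrac{2\pi^2}{3}E_2\,P(w-w_k) + \wp_4(w-w_k),
\]
using $\wp_4=\tfrac{1}{6}\partial^2 P$. Formula (1) handles case (B):
\[
P(w-x)P(w-y) = 2\pi i\,\partial_\tau P(x-y) + \bigl(P(w-x)+P(w-y)\bigr)P(x-y) + \bigl(\zeta(w-x)-\zeta(w-y)\bigr)\partial_x P(x-y).
\]
Summing the ``$2\pi i\,\partial_\tau P$'' pieces arising from all Wick contractions of $T(w)$ reconstructs $2\pi i\,\partial_\tau$ acting on every propagator of the base Feynman graph, hence equals $2\pi i\,\eta^{-1}\partial_\tau F$ where $F$ is the base correlator. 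Combining with $2\pi i\,\partial_\tau \eta^{-1} = -\tfrac{\pi^2}{6}E_2\,\eta^{-1}$ (since $\partial_\tau\log\eta=\tfrac{\pi i}{12}E_2$) gives $2\pi i\,\partial_\tau S - \tfrac{\pi^2}{6}E_2\,S$, and the subtracted term exactly cancels case (A).

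The remaining pieces will match the conformal terms of Axiom~(6) as follows. The ``$(P(w-x)+P(w-y))P(x-y)$'' piece, summed over contractions, reinstates the edge $P(x-y)$ within the base graph and leaves a single factor $P(w-x)$ or $P(w-y)$ attached at an external point, yielding $\sum_k P(w-z_k)\,S$ (each $z_k$ carries one $\partial\phi$-leg) and $\sum_k 2P(w-w_k)\,S$ (each $T(w_k)$ carries two legs, giving the coefficient $2$). The ``$(\zeta(w-x)-\zeta(w-y))\partial_x P(x-y)$'' piece, summed over ordered pairs of distinct legs sharing the same external point, telescopes via the Leibniz rule for $\partial_x$ on the base graph at $x$ to yield $\sum_k\zeta(w-z_k)\partial_{z_k}S + \sum_k\zeta(w-w_k)\partial_{w_k}S$; since $\zeta(z;\tau) = \wp_1(z;\tau) - z\,\tfrac{\pi^2}{3}E_2(\tau)$, this recovers the $\wp_1$-weighted derivative terms. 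Finally, the $\wp_4(w-w_k)$ part of case~(C), combined with the $\tfrac{1}{2}$ in $T=\tfrac12{:}\partial\phi^2{:}$, yields the central-charge term $\tfrac12\sum_k\wp_4(w-w_k)\,S_{\widehat{w_k}}$.

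The main obstacle lies in the combinatorial bookkeeping of the last step: one must verify that after summing over all Wick contractions (with the correct automorphism factors), the sum $\sum_{(x,y)}2\pi i\,\partial_\tau P(x-y)\cdot F^{(x,y)}$ reproduces $2\pi i\,\partial_\tau F$, and that the $\zeta$-difference pieces reorganize cleanly into position derivatives of $F$ without cross-contamination between the $z_k$- and $w_k$-sectors. This is essentially an ``edge-breaking'' argument for Feynman graphs—standard in spirit but requiring careful attention to how the $\partial\phi$-legs are distributed among the operators sitting at each external point, and to the symmetrization implicit in $T(w_k)=\tfrac12{:}\partial\phi(w_k)^2{:}$.
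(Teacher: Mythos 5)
Your proposal follows essentially the same route as the paper: both decompose the insertion of $T(w)$ into the three contraction patterns (self-loop at $w$; the two legs going to two distinct $\partial\phi$-legs, i.e.\ the new vertex splitting an existing edge; a double edge to a single $T(w_k)$, i.e.\ splitting a self-loop), reduce via the product formulas of Proposition~\ref{prodformula}, and reassemble using $\zeta(z;\tau)=\wp_1(z;\tau)-z\tfrac{\pi^2}{3}E_2(\tau)$ together with $\partial_\tau\log\eta=\tfrac{\pi i}{12}E_2$. One small slip: since $i^2=-1$ one has $2\pi i\,\partial_\tau\eta^{-1}=+\tfrac{\pi^2}{6}E_2\,\eta^{-1}$, not $-\tfrac{\pi^2}{6}E_2\,\eta^{-1}$, though the conclusion you actually use --- that $\eta^{-1}\,2\pi i\,\partial_\tau F=2\pi i\,\partial_\tau S-\tfrac{\pi^2}{6}E_2\,S$, whose correction term cancels case~(A) --- is the correct one.
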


\begin{proof} We consider the graph change by adding the new vertex $T(w)=\frac{1}{2}:(\p\phi(w))^2:$. The changes include the following three cases:

\noindent\textbf{Case 1.} $T(w)=\frac{1}{2}:(\p\phi(w))^2:$ connects to itself, i.e., for each original graph, adding an isolated self-loop:
\begin{figure}[H]
	\centering
\begingroup%
  \makeatletter%
  \providecommand\color[2][]{%
    \errmessage{(Inkscape) Color is used for the text in Inkscape, but the package 'color.sty' is not loaded}%
    \renewcommand\color[2][]{}%
  }%
  \providecommand\transparent[1]{%
    \errmessage{(Inkscape) Transparency is used (non-zero) for the text in Inkscape, but the package 'transparent.sty' is not loaded}%
    \renewcommand\transparent[1]{}%
  }%
  \providecommand\rotatebox[2]{#2}%
  \newcommand*\fsize{\dimexpr\f@size pt\relax}%
  \newcommand*\lineheight[1]{\fontsize{\fsize}{#1\fsize}\selectfont}%
  \ifx\svgwidth\undefined%
    \setlength{\unitlength}{128.69305828bp}%
    \ifx\svgscale\undefined%
      \relax%
    \else%
      \setlength{\unitlength}{\unitlength * \real{\svgscale}}%
    \fi%
  \else%
    \setlength{\unitlength}{\svgwidth}%
  \fi%
  \global\let\svgwidth\undefined%
  \global\let\svgscale\undefined%
  \makeatother%
  \begin{picture}(1,0.28613189)%
    \lineheight{1}%
    \setlength\tabcolsep{0pt}%
    \put(0,0){\includegraphics[width=\unitlength,page=1]{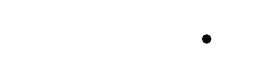}}%
    \put(0.82729754,0.12355392){\makebox(0,0)[lt]{\lineheight{1.25}\smash{\begin{tabular}[t]{l}$w$\end{tabular}}}}%
    \put(-0.00235237,0.11843731){\makebox(0,0)[lt]{\lineheight{1.25}\smash{\begin{tabular}[t]{l}$\frac{\pi^2}{3}E_2$\end{tabular}}}}%
    \put(0,0){\includegraphics[width=\unitlength,page=2]{case1.pdf}}%
  \end{picture}%
\endgroup%

	\caption[]{CASE 1.}
	\label{Fig:Q1}
\end{figure}

Then we get a contribution $\frac{\pi^2}{3}E_2S$.

\noindent\textbf{Case 2.} The new vertex is added to the edge that connects the vertices $u$ and $v$. Here $u$, $v$ could be $w_i$ or $z_j$.
\begin{figure}[H]
	\centering
\begingroup%
  \makeatletter%
  \providecommand\color[2][]{%
    \errmessage{(Inkscape) Color is used for the text in Inkscape, but the package 'color.sty' is not loaded}%
    \renewcommand\color[2][]{}%
  }%
  \providecommand\transparent[1]{%
    \errmessage{(Inkscape) Transparency is used (non-zero) for the text in Inkscape, but the package 'transparent.sty' is not loaded}%
    \renewcommand\transparent[1]{}%
  }%
  \providecommand\rotatebox[2]{#2}%
  \newcommand*\fsize{\dimexpr\f@size pt\relax}%
  \newcommand*\lineheight[1]{\fontsize{\fsize}{#1\fsize}\selectfont}%
  \ifx\svgwidth\undefined%
    \setlength{\unitlength}{161.12811592bp}%
    \ifx\svgscale\undefined%
      \relax%
    \else%
      \setlength{\unitlength}{\unitlength * \real{\svgscale}}%
    \fi%
  \else%
    \setlength{\unitlength}{\svgwidth}%
  \fi%
  \global\let\svgwidth\undefined%
  \global\let\svgscale\undefined%
  \makeatother%
  \begin{picture}(1,0.39686444)%
    \lineheight{1}%
    \setlength\tabcolsep{0pt}%
    \put(0,0){\includegraphics[width=\unitlength,page=1]{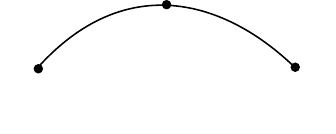}}%
    \put(0.46112487,0.32004944){\makebox(0,0)[lt]{\lineheight{1.25}\smash{\begin{tabular}[t]{l}$w$\end{tabular}}}}%
    \put(-0.00281827,0.17400458){\makebox(0,0)[lt]{\lineheight{1.25}\smash{\begin{tabular}[t]{l}$u$\end{tabular}}}}%
    \put(0.93319796,0.17224502){\makebox(0,0)[lt]{\lineheight{1.25}\smash{\begin{tabular}[t]{l}$v$\end{tabular}}}}%
    \put(0.02647193,0.37148491){\makebox(0,0)[lt]{\lineheight{1.25}\smash{\begin{tabular}[t]{l}$P(u-w)$\end{tabular}}}}%
    \put(0.75102573,0.37147492){\makebox(0,0)[lt]{\lineheight{1.25}\smash{\begin{tabular}[t]{l}$P(w-v)$\end{tabular}}}}%
    \put(0,0){\includegraphics[width=\unitlength,page=2]{case2.pdf}}%
    \put(0.37107792,0.00499409){\makebox(0,0)[lt]{\lineheight{1.25}\smash{\begin{tabular}[t]{l}\textcolor{red}{$P(u-v)$}\end{tabular}}}}%
  \end{picture}%
\endgroup%

	\caption[]{CASE 2.}
	\label{Fig:Q1}
\end{figure}

By Proposition \ref{prodformula}, we obtain
\begin{align*}
&P(u-w)P(w-v)\\
=~&2\pi i\frac{d}{d\tau}P(u-v)+\zeta(w-u)\frac{d}{du}P(u-v)+P(w-v)P(u-v)\\
 ~&+\zeta(w-v)\frac{d}{dv}P(u-v)+P(w-u)P(u-v).\\
=~&2\pi i\frac{d}{d\tau}P(u-v)\\
  &+\bigg(\wp_1(w-u)-(w-u)\frac{\pi^2}{3}E_2\bigg)\frac{d}{du}P(u-v)+\bigg(\wp_1(w-v)-(w-v)\frac{\pi^2}{3}E_2\bigg)\frac{d}{dv}P(u-v)\\
  &+\bigg(\wp_2(w-u)+\frac{\pi^2}{3}E_2\bigg)P(u-v)+\bigg(\wp_2(w-v)+\frac{\pi^2}{3}E_2\bigg)P(u-v).
\end{align*}

\noindent\textbf{Case 3.} The new vertex is added in the self-loop based on the vertex $w_i$:
\bigskip
\begin{figure}[H]
	\centering
\begingroup%
  \makeatletter%
  \providecommand\color[2][]{%
    \errmessage{(Inkscape) Color is used for the text in Inkscape, but the package 'color.sty' is not loaded}%
    \renewcommand\color[2][]{}%
  }%
  \providecommand\transparent[1]{%
    \errmessage{(Inkscape) Transparency is used (non-zero) for the text in Inkscape, but the package 'transparent.sty' is not loaded}%
    \renewcommand\transparent[1]{}%
  }%
  \providecommand\rotatebox[2]{#2}%
  \newcommand*\fsize{\dimexpr\f@size pt\relax}%
  \newcommand*\lineheight[1]{\fontsize{\fsize}{#1\fsize}\selectfont}%
  \ifx\svgwidth\undefined%
    \setlength{\unitlength}{221.57241701bp}%
    \ifx\svgscale\undefined%
      \relax%
    \else%
      \setlength{\unitlength}{\unitlength * \real{\svgscale}}%
    \fi%
  \else%
    \setlength{\unitlength}{\svgwidth}%
  \fi%
  \global\let\svgwidth\undefined%
  \global\let\svgscale\undefined%
  \makeatother%
  \begin{picture}(1,0.27867813)%
    \lineheight{1}%
    \setlength\tabcolsep{0pt}%
    \put(0,0){\includegraphics[width=\unitlength,page=1]{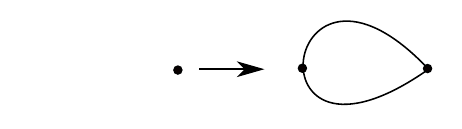}}%
    \put(-0.0013663,0.122971){\makebox(0,0)[lt]{\lineheight{1.25}\smash{\begin{tabular}[t]{l}$\frac{\pi^2}{3}E_2$\end{tabular}}}}%
    \put(0.29303674,0.12242974){\makebox(0,0)[lt]{\lineheight{1.25}\smash{\begin{tabular}[t]{l}$w_i$\end{tabular}}}}%
    \put(0.94709336,0.12532371){\makebox(0,0)[lt]{\lineheight{1.25}\smash{\begin{tabular}[t]{l}$w_i$\end{tabular}}}}%
    \put(0.60412161,0.11292983){\makebox(0,0)[lt]{\lineheight{1.25}\smash{\begin{tabular}[t]{l}$w$\end{tabular}}}}%
    \put(0.72879859,0.26478595){\makebox(0,0)[lt]{\lineheight{1.25}\smash{\begin{tabular}[t]{l}$P(w-w_i)$\end{tabular}}}}%
    \put(0.7403995,0.00275373){\makebox(0,0)[lt]{\lineheight{1.25}\smash{\begin{tabular}[t]{l}$P(w-w_i)$\end{tabular}}}}%
    \put(0,0){\includegraphics[width=\unitlength,page=2]{case3.pdf}}%
  \end{picture}%
\endgroup%

	\caption[]{CASE 3.}
	\label{Fig:Q1}
\end{figure}

In this case, by Proposition \ref{prodformula}, we obtain
$$P(w-w_i)P(w-w_i)=2\pi i\frac{d}{d\tau}\left(\frac{\pi^2}{3}E_2\right)+P(w-w_i)\frac{2\pi^2}{3}E_2+\wp_4(w-w_i).$$

Note that in our definition of the correlation function, there is a scaling factor $\eta(\tau)^{-1}$.
Taking into account the automorphism of the graph and combining Cases 1,2,3 with the following differential equation for the Dedekind eta function $\eta(\tau)$:
$$\frac{d}{d\tau}\log(\eta(\tau))=\frac{\pi i}{12}E_2(\tau),$$
we get the desired equation.
\end{proof}

\end{appendices}

\end{document}